\newcommand{\Oval}{\mathcal{O}}
\newcommand{\Mval}{\mathfrak{M}}
\newtheorem{lema}{Lemma}[subsection]
\newtheorem{prop}[lema]{Proposition}
\theoremstyle{definition}
\newtheorem{sit}[lema]{Situation}
\newtheorem{defi}[lema]{Definition}
\title[One-dimensional commutative groups in ACVF and in PL$_0$]{One-dimensional commutative 
groups definable in algebraically closed valued
fields and in pseudo-local fields}
\author{Juan Pablo Acosta L\'opez}
\email{jupaaclo1393@gmail.com}
\author{Martin Hils}
\address{Martin Hils, Institut f\"ur Mathematische Logik und Grundlagenforschung, Universit\"at M\"unster, Einsteinstr. 62, D-48149 M\" unster, Germany}
\email{hils@uni-muenster.de}
\thanks{JPAL was was supported by the Alexander von Humboldt Foundation through a
Humboldt Research Fellowship for Postdoctoral Researchers. Both authors were partially supported by the German Research Foundation (DFG) via HI 2004/1-1 (part of the French-German ANR-DFG project GeoMod) and under Germany's Excellence Strategy EXC 2044-390685587, `Mathematics M\"unster: Dynamics-Geometry-Structure'.}
\keywords{model theory, henselian valued fields, pseudolocal fields, commutative algebraic groups} 
\subjclass[2020]{Primary: 03C60, Secondary: 03C98, 11G07, 11U09, 12J25.}
\begin{document}
\maketitle
\begin{abstract}
We give a complete list of the commutative one-dimensional groups definable in algebraically
closed valued fields and in pseudo-local fields, up to a finite index subgroup
and quotienting by a finite subgroup.
\end{abstract}
\section{Introduction}
In \cite{one-dimensional-p-adic} a classification of one-dimensional groups
definable in the $p$-adic numbers is established. Here we obtain a similar classification for definable commutative
groups of dimension $1$ over other valued fields.
Of special importance are the algebraically closed valued fields. 
This is done in Proposition \ref{one-dimensional-acvf} except in 
residue characteristic $2$ and $3$. The only part where this restriction appears
is in the study of elliptic curves, it would be good to remove it.

Another interesting field is the one obtained by letting $p$ vary in a $p$-adic field.
More precisely if we have $\mathcal{U}$ a non-principal ultra-filter on the set of prime numbers,
then $\Pi_{p}\mathbb{Q}_p/\mathcal{U}$ is elementary equivalent to $F((t))$
where $F$ is a pseudo-finite field of characteristic $0$ (we can also replace $\mathbb{Q}_p$
with any finite extension of it). 
The classification in this case is obtained in Proposition \ref{one-dimensional-psfv0}.

The strategy of the proof is similar to the one in \cite{one-dimensional-p-adic}.
The main new technical issue in the classification for these valued fields
is the presence of an infinite residue field.

To recall how this proof goes,
one uses the algebraization result for groups 
in a very general context obtained in \cite{mop},
see Section \ref{mopsec}, in order to obtain a one-dimensional 
algebraic group containing as a type-definable subgroup a ``large'' 
type-definable subgroup of the given definable group, after quotienting
out a finite subgroup. 
Here large means that the index is of cardinality smaller than the degree
of saturation of the model.

An important subproblem now becomes describing all type-definable subgroups
of a one-dimensional algebraic group. 

One-dimensional connected
algebraic groups over a perfect field have a classical classification:
they are either the additive group, the multiplicative group, a twisted multiplicative group or an elliptic curve.
So one has to describe the type-definable subgroups in each of these cases.
After this one uses logic 
compactness to go from the given type-definable group morphism to a 
definable local group morphism and from there to a suitably defined ind-definable group
morphism, which gives the classification. We try to reference the arguments
in \cite{one-dimensional-p-adic} when relevant so that we can focus 
on the modifications needed in this article. 

We mention that we give here an additional argument that allows us to 
get rid of the finite kernel in the classification in several cases,
for instance the argument shows this finite kernel is unnecessary in the 
previously obtained classification for the $p$-adic number case, see
Lemma \ref{finite-kernel}. We also mention that the hypothesis
of commutativity is now known to be unnecessary in all cases 
except perhaps the case of algebraically closed valued fields of
positive equicharacteristic, by work of Hasson and the first author (see \cite[Section 8]{acosta-hasson}), where
it is proven that a definable one-dimensional group contains
a finite index commutative subgroup.
The proof in the pseudo-local case requires the classification
obtained here.

The description of type-definable subgroups of algebraic one-dimensional groups proceeds
more or less as follows. For the given algebraic group $G$,
 often there are short exact sequences
$0\to G_0\to G\to \overline{G}$, and $0\to G_0^-\to G_0\to H$, where $H$ is an algebraic group over the residue field, and where $G_0$ and $G_0^-$ are suitable definable subgroups of $G$.
These maps sometimes satisfy that all definable subsets of the domain
 contain almost all fibers of the image, or at some points some more complicated
weakening of this property. This property appears often enough
that we call it opaqueness --- this term is borrowed from \cite{HasHruMac-Book}. Using opaqueness, one sees that the 
classification reduces
to a classification of definable subgroups of $G_0^-$,  $H$ and of $\overline{G}$, 
see Lemma \ref{opa-group}. 
For the kernel
$G_0^-$ one has a group filtration  
$w:G_0^-\to \Gamma$ with intermediate quotients $(k,+)$,
and in this case a proof analogous to the case of $\Oval$ 
with the filtration $v:\Oval\to \Gamma$ works.

The fact that the residue field is infinite appears in several places in
the described analysis. First the algebraic group $H$ is infinite,
which led us to the notion of opaqueness.
Second, $G_0^-$ has a subgroup which is analytically isomorphic to
$(\Oval,+)$ via a uniformization map. 
In the $p$-adic number case this subgroup is of finite index, so no
additional analysis is necessary. In the case 
of ACVF of positive residue characteristic, this subgroup is merely one
of the subgroups of $G_0^-$ corresponding to the filtration mentioned
above, which necessitates an additional argument for the case $G_0^-$.

As an aside, we mention that the
detailed information about the subgroups of algebraic groups that
we obtain, allows us
for instance to identify the maximal stably dominated subgroup in the case of 
an elliptic curve as defined in \cite{metastable}. See Proposition \ref{st-dom-ell}.

The methods here work for other valued fields when there is a good description of 
the type-definable subgroups of one-dimensional algebraic groups in the residue field,
and of the definable sets in the value group. So for example the proofs in 
Proposition \ref{one-dimensional-psfv0} adapt without change to $\mathbb{C}((t))$ and 
$F((t^\mathbb{Q}))$ for $F$ pseudo-finite of characteristic $0$.
As the information needed for the definable subsets of the value groups
is very detailed, we refrain from attempting a general framework, we believe
the cases treated are the ones of the most general interest.

In the last section we show that in the case of algebraically
closed valued fields of equicharacteristic $0$, the list given in the classification
has only the expected redundancies.

\subsection{Overview of the paper}
In Section \ref{prelimsec} we review the various preliminary results
used in this paper.

In Section \ref{notsec} we set some basic notation concerning valued
fields, model theory, and some specific notation about groups in valued
fields.

In Section \ref{valsec} we review the results on the model theory
of valued fields used in the paper, including quantifier elimination results
and a description of definable subsets of $K$.

Section \ref{rvsec} contains a result of quantifier elimination in
a short exact sequence of abelian groups, which is used to describe
the definable subsets of RV.

As mentioned in the introduction, 
Section \ref{mopsec} contains the algebraization result that allows
us to relate a definable group to an algebraic group.

Section \ref{anasec} contains a description of the model theory of 
valued fields with
 certain analytic functions added.
This is usually written in some general framework, so for convenience
of the reader, we detail
those particular cases which we will be making use of.

Sections \ref{ellsec} and \ref{tate-section} 
contain material on elliptic curves ---
we use \cite{silverman-1} and \cite{silverman-2} as general references. 
In algebraic contexts it is very common to restrict oneself to 
valued fields of rank 1, or even complete discrete valuation fields.
We work in $\omega$-saturated valued fields,
which are not of rank 1,
 so we hope these sections serve as a useful reference
on statements about elliptic curves used in this paper.

Section \ref{alggrosec} contains classical material about algebraic groups
and some basic facts on the model theory of algebraically closed fields.

Section \ref{opasec} deals with one of the central technical tools of the paper,
the notion of opaqueness mentioned in the introduction. Here
we show that various maps in valued fields are opaque. Several 
of the statements here are proven in the generality of an
arbitrary 
residue characteristic $0$ valued field. We also show
that the description of the subgroups of a definable group reduces
to a description of subgroups of the kernel and the quotient when the
quotient map is opaque. This concept allows us to treat more general
valued fields, and also streamlines some of the 
previous arguments on the classification in the $p$-adic number case.

Section \ref{ortsec} contains statements on the triviality of maps
from the value group and residue field into the main sort of a valued
field. 

Section \ref{subgrosec} contains the description of the type-definable
subgroups of one-dimensional algebraic groups in the valued fields
discussed in this paper.
Sections \ref{addgrosec}, \ref{mulgrosec}, \ref{ellgrosec} and 
\ref{twimulgrosec}
deal with the additive, multiplicative, elliptic and twisted multiplicative
cases, respectively. As mentioned in the introduction the proofs have 
some features in common, the use of opaqueness is one of them,
and arguments dealing with a certain group equipped with a filtration
is another. For this last one, we extract the common argument
to avoid repetition in Lemma \ref{pll} which is adapted to 
residue characteristic $0$ valued fields, and in Lemma \ref{acvfl}
which is adapted to ACVF. 
As an aside, note that for the case of ACVF with residue 
characteristic $0$ both arguments are valid.

Section \ref{maisec} contains our main classification results for one-dimensional definable groups in the valued fields considered.
The proofs are the same as in \cite{one-dimensional-p-adic} 
once the material above has 
been developed, and so, they are omitted.
As mentioned in the introduction we give here an argument that allows us
to remove the finite kernel in the classification in some cases,
including the previously obtained $p$-adic classification.

In Section \ref{isogrosec} we work with the case of algebraically closed valued
fields of residue characteristic $0$. In this context we show that the list
obtained in the classification is not redundant, in the sense that only the
expected groups are definably isomorphic. The main idea is to take
the Zariski closure of a definable group morphism to obtain an
algebraic morphism (see Lemma \ref{def-to-alg-mor}). 
We leave untreated the positive
residue characteristic case and the pseudo-local case.

\subsection*{Acknowledgement} We thank the anonymous referee for useful comments on an earlier version of this paper which helped improve the presentation.

\section{Preliminaries}\label{prelimsec}
\subsection{Notation}\label{notsec}
Throughout the paper for a valued field $K$ we use the notation $\Oval$ for
the valuation ring, $\Gamma$ for the value group and $\Mval$ for the maximal
ideal of $\Oval$. We denote $\Gamma_{\infty}$ the group $\Gamma$ together with one additional distinguished element denoted $\infty$. The valuation 
$K\to \Gamma_{\infty}$ is denoted by $v$.
 We denote $RV=K^{\times}/(1+\Mval)$ and $rv:K^{\times}\to RV$ the canonical
projection. We also denote $RV_0$ to be $RV$ together with an additional
element denoted $0$, 
and also denote by $rv$ the map $rv:K\to RV_0$ 
that sends $0$ to $0$.
If $r\in \Gamma$ we denote $B_r=\{x\in K\mid v(x)\geq r\}$ and
$B_r^-=\{x\in K\mid v(x)>r\}$. If $r>0$ denote
$U_r=\{x\in K\mid v(1-x)\geq r\}$ and $U_r^{-}=\{x\in K\mid v(1-x)>r\}$, and
also $U_0^{-}=1+\Mval$.

A model of ACVF is a non-trivially valued algebraically closed field.

A model of PL$_0$ is a valued field with residue 
field pseudo-finite of characteristic $0$ and
value group a $Z$-group.

We will assume that for every cardinal there is a larger inaccessible cardinal.
We will work in a monster model $K$
which for definiteness we take to be a saturated model of cardinality $\kappa$ for $\kappa$
an inaccessible cardinal. A small set is a set of cardinality smaller than $\kappa$.

This monster model can be replaced by a 
$\kappa$-saturated strongly $\kappa$-homogeneous model, for sufficiently large $\kappa$ 
depending on the arguments used,
so this assumption on inaccessible cardinals is unnecessary. In fact, for the arguments
in the paper $\omega$-saturated is enough, as long as one is careful
in the statements with the use of ``type-definable'' and ``small cardinality''.
To avoid this straightforward but cumbersome bookkeeping we prefer
to work with a monster model as described here.

Definable means definable with parameters, type-definable means a small intersection
of definable sets. If $A$ is a small set of parameters, $A$-definable means
definable with parameters in $A$ and $A$-type-definable is an (automatically small) intersection of 
$A$-definable sets.

An ind-definable set is a small directed colimit of definable sets with definable transition
maps,
and a map of ind-definable sets is ind-definable if the composition with a canonical
map in the domain factors as a composition of a definable map with a canonical map
in the target.
Ind-definable sets appear in 
Proposition \ref{unicovell}, the only ones we need are countable 
disjoint unions of definable sets. 

If $\Gamma$ is an ordered abelian group and $\gamma\in \Gamma$, then
$o(\gamma)$ is the set of elements $x\in \Gamma$ such that $|nx|<\gamma$ for all 
$n\in \mathbb{Z}_{>0}$ and $O(\gamma)$ is the set of elements such that 
$|x|<n\gamma$ for some $n\in \mathbb{Z}$. We also denote $C(\gamma)=O(\gamma)/\mathbb{Z}\gamma$, the non-standard analogue of addition modulo $\gamma$.

If $K$ is a valued field then for $a\in K^{\times}$ we use the shorthand
$O(a)=v^{-1}O(v(a))$ and $o(a)=v^{-1}o(v(a))$.
We also denote $H(a)=O(a)/o(a)$.

\subsection{Valued fields}\label{valsec}
For a valued field $K$ the valued field language consists of a two sorted language,
with a sort for $K$ and a sort for $\Gamma_{\infty}$, the ring language on $K$,
the ordered group language on $\Gamma_{\infty}$ with a constant for $\infty$ and 
the function $v:K\to \Gamma_{\infty}$. We will use the well known fact 
that ACVF eliminates quantifiers in this language, and that the completions of ACVF
are determined by fixing the characteristic and fixing the characteristic of the residue 
field.
We will also use the fact that ACVF eliminates quantifiers in the language
of valued fields with an additional sort $k$ for the residue field, 
the language of rings on $k$, and a map
$K^2\to k$ which to $(a,b)$ associates the residue of $a/b$ in case $b\neq0$ and $a/b\in\Oval$ and which is $0$ otherwise. For a proof
of this see for example \cite[Theorem 2.1.1]{HHM}.

Moreover there is a description of the definable sets $X\subset K$ due to Holly. Recall that 
a Swiss cheese in $K$ is a set of the form $B\setminus C_1\cup\dots C_n$, where $B$ is a ball, the whole of $K$ or a singleton, and where $C_i\subsetneq B$ is a ball
or a singleton for all $i$.

\begin{prop}[\cite{holly-art}]\label{holly}
Let $K\models$ACVF and $X\subset K$ be a definable subset. Then $X$ is a disjoint union of Swiss cheeses.
\end{prop}

If $K$ is a henselian valued field 
of residue characteristic $0$, then the theory
of $K$ is determined by the theory of the residue field $k$ as a pure field and
the theory of $\Gamma$ as an ordered group. This is the famous Ax-Kochen-Ershov Theorem (see, e.g., \cite{Dries-book}).

If $K$ is a henselian valued field of residue characteristic $0$ then there is 
quantifier elimination of the field sort relative to RV. This is a result of Basarab \cite{Bas-EQHens}.
We will use this in the form below.
\begin{prop}\label{qerelrv}
If $K$ is a henselian valued field of residue characteristic $0$ then every definable set
$X\subset K^n$ is a boolean combination of sets quantifier-free definable in the 
valued field language
and sets of of the form $f^{-1}rv^{-1}(Y)$ for 
$Y\subset RV^m$ definable and $f:K^n\to K^m$ given by polynomials.
Here $RV$ is given the language that interprets the short exact sequence
$1\to k^{\times}\to RV\to \Gamma\to 0$, the group language, the ring language on 
$k^{\times}$ and the ordered group language on $\Gamma$.
\end{prop}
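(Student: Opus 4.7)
The plan is to deduce this relative quantifier elimination from the standard embedding test in the two-sorted language with sorts $K$ and $RV$. Take two $\omega$-saturated henselian valued fields $K_1,K_2$ of residue characteristic $0$, small valued subfields $A_i\subset K_i$, a valued-field isomorphism $f:A_1\to A_2$, and an elementary isomorphism $F:RV(K_1)\to RV(K_2)$ extending $rv\circ f$. The goal is to show that $f$ extends to an elementary embedding $K_1\hookrightarrow K_2$ compatible with $F$. A standard syntactic translation then yields the claimed boolean-combination description of definable subsets of $K^n$.

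First I would pass to henselizations $A_i^h\subset K_i$. These are immediate separable-algebraic extensions, so $\Gamma(A_i^h)=\Gamma(A_i)$ and $k(A_i^h)=k(A_i)$, hence $RV(A_i^h)=RV(A_i)$, and $f$ extends uniquely by the universal property of the henselization. So I may assume both $A_i$ are henselian. The main step is: given $a\in K_1\setminus A_1$, extend $f$ to $A_1(a)$. I split into three cases. \textbf{(a) $a$ algebraic over $A_1$:} the $RV$-type of $a$ over $A_1$ pins down one extension of the valuation from the finitely many extensions to $A_1(a)$, and Hensel's lemma together with $\omega$-saturation of $K_2$ produces a matching root $b\in K_2$. \textbf{(b) $v(a)\notin \Gamma(A_1)\otimes\mathbb{Q}$ or the residue of $a$ is transcendental over $k(A_1)$:} the extension $A_1(a)/A_1$ is purely transcendental, and a Gauss-lemma calculation shows that its valued-field structure is determined by the values $rv(p(a))$ for $p\in A_1[x]$, i.e.\ by the $RV$-type of $a$ over $A_1$; saturation of $K_2$ then produces $b$. \textbf{(c) Immediate transcendental case:} $a$ is a pseudo-limit of a pseudo-Cauchy sequence $(a_i)$ in $A_1$ of transcendental type, and by Kaplansky's theorem, in residue characteristic $0$ (where there is no defect) the extension $A_1(a)/A_1$ is determined up to valued-field isomorphism by this pseudo-Cauchy sequence. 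Saturation of $K_2$ furnishes a pseudo-limit $b$ of $(f(a_i))$.

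The main obstacle is case (c), which rests on Kaplansky's classification of immediate extensions via pseudo-Cauchy sequences; residue characteristic $0$ is exactly what makes the analysis clean, ruling out the wild ramification and defect phenomena of positive residue characteristic. Cases (a) and (b) are by contrast rather mechanical once one accepts that the $RV$-data captures enough of the Newton-polygon information of polynomials evaluated at $a$. With all three cases in hand, transfinite iteration extends $f$ to all of $K_1$, completing the embedding test; the boolean-combination form of the proposition then follows by comparing what the test preserves with what the proposed class of formulas defines.
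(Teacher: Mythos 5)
The paper does not prove this proposition itself; it is quoted from Proposition 4.3 of \cite{flenner}, and the proof there is precisely the back-and-forth/embedding test you describe, with the same trichotomy of extension steps (algebraic over a henselian base, value- or residue-transcendental, and immediate transcendental handled via Kaplansky's theory of pseudo-Cauchy sequences, which is where residue characteristic $0$ enters to exclude defect). Your sketch is therefore the standard argument and is correct in outline; the only points needing more care than you give are the usual bookkeeping ones — you need enough saturation of $K_2$ (not just $\omega$-saturation) to run the transfinite extension, and in case (c) you should first close $A_1$ under case (a) (i.e.\ make it algebraically maximal, not merely henselian) so that the pseudo-Cauchy sequences you encounter are genuinely of transcendental type.
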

For a proof see for example \cite[Proposition 4.3]{flenner}. 
Flenner proves Proposition \ref{rv} below, but a feature of the proof
is that it establishes Proposition \ref{qerelrv} at the same time.

We have a description of the definable sets $X\subset K$ due to Flenner.

\begin{prop}[\mbox{\cite[Proposition 5.1]{flenner}}]\label{rv}
Suppose that $K$ is a henselian valued field of residue characteristic $0$ and $X\subset K$ 
is a definable set. Then there are $a_1,\dots, a_n\in K$ and a definable set 
$Y\subset RV_{0}^n$ such that 
$X=\{x\in K\mid (rv(x-a_1),\dots, rv(x-a_n))\in Y\}$.
\end{prop}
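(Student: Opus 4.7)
The plan is to combine the preceding relative quantifier elimination with a reduction that replaces arbitrary polynomials by linear forms inside $rv$-pullbacks. Applying the preceding proposition with $n=1$, a definable $X\subset K$ is a Boolean combination of quantifier-free valued-field definable sets and sets $\{x\in K:(rv(p_1(x)),\ldots,rv(p_m(x)))\in Z\}$ with $p_i\in K[x]$ and $Z\subset RV_0^m$ definable. Every quantifier-free atomic one-variable formula compares valuations of polynomials or asserts $p(x)=0$, and hence depends only on the images under the canonical map $RV_0\to\Gamma_\infty$; so the entire Boolean combination can be absorbed into a single condition $(rv(p_1(x)),\ldots,rv(p_N(x)))\in Y'$ for some definable $Y'\subset RV_0^N$.

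The remaining claim is that for each polynomial $p\in K[x]$ there exist $a_1,\ldots,a_r\in K$ and a definable map $\phi_p:RV_0^r\to RV_0$ with $rv(p(x))=\phi_p(rv(x-a_1),\ldots,rv(x-a_r))$ for every $x\in K$. To establish this, extend $v$ uniquely to a finite Galois extension $L\supset K$ in which $p$ splits, $p(x)=c\prod_i(x-\alpha_i)$, and organize the roots into a hierarchy of clusters under the ultrametric $v(\alpha-\beta)$. Residue characteristic $0$ permits averaging within a Galois-invariant cluster $C=\{\beta_1,\ldots,\beta_k\}$: the mean $(\beta_1+\cdots+\beta_k)/k$ lies in $K$, as a $K$-rational symmetric function divided by $k\in O^\times$, and is strictly closer to every $\beta_j$ than the ``outside'' threshold. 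Performing this averaging at each level of the tree (over full Galois orbits of clusters when individual clusters are not themselves Galois-stable), extract a finite list $a_1,\ldots,a_r\in K$ such that every root $\alpha_i$ admits a representative $a_{s(i)}$ with $v(\alpha_i-a_{s(i)})>v(\alpha_i-\alpha_j)$ for every $\alpha_j$ outside the finest cluster containing $\alpha_i$. For any $x\in K$ the ultrametric dichotomy then yields either $rv(x-\alpha_i)=rv(x-a_{s(i)})$ when $v(x-a_{s(i)})\leq v(\alpha_i-a_{s(i)})$, or $rv(x-\alpha_i)=-rv(\alpha_i-a_{s(i)})$ when $v(x-a_{s(i)})>v(\alpha_i-a_{s(i)})$; taking the product over $i$ and grouping Galois conjugates so only $K$-rational quantities survive leaves a definable function of $(rv(x-a_j))_j$. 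Substituting the $\phi_{p_j}$ into the condition from the first paragraph yields the required $X=\{x:(rv(x-a_i))_i\in Y\}$ with $Y\subset RV_0^n$ definable.

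The main obstacle is keeping every intermediate object inside $K$: the finest clusters of roots are typically not individually Galois-stable, so the averaging must span entire Galois orbits, and one has to verify that the resulting piecewise description of each $rv(x-\alpha_i)$ depends only on the $rv(x-a_j)$'s and not on finer data about $x$. Residue characteristic $0$ is essential in two places: it allows division by cluster sizes during the averaging, and it underlies the Galois-invariant coefficient data that supplies $K$-rational symmetric functions in the first place.
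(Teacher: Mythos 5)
The paper does not prove this statement; it cites Proposition 5.1 of Flenner, and your outline is essentially a reconstruction of that argument: relative quantifier elimination to $RV$, absorption of the quantifier-free atomic conditions via the map $RV_0\to\Gamma_\infty$, and then the reduction of each $rv(p(x))$ to a function of finitely many $rv(x-a_j)$ by splitting $p$ and approximating clusters of roots by $K$-rational centres. Your first paragraph is fine.

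The gap is in the second paragraph, and it sits exactly where the content of the proposition lies. First, the approximation property you extract --- a list $a_1,\dots,a_r\in K$ with $v(\alpha_i-a_{s(i)})>v(\alpha_i-\alpha_j)$ for every $\alpha_j$ outside the finest cluster of $\alpha_i$ --- is unachievable whenever that finest cluster is not Galois-stable (e.g.\ a singleton $\{\alpha_i\}$ with $\alpha_i\notin K$): by Krasner's lemma no $a\in K$ satisfies $v(\alpha_i-a)>\max_{\sigma\alpha_i\neq\alpha_i}v(\sigma\alpha_i-\alpha_i)$, and averaging over the Galois orbit of the cluster only yields $v(\alpha_i-a)\geq$ that maximum, with equality possible. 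Second, your ``ultrametric dichotomy'' is false in the boundary case $v(x-a_{s(i)})=v(\alpha_i-a_{s(i)})$: there $rv(x-\alpha_i)$ need not equal $rv(x-a_{s(i)})$, and if moreover $rv(x-a_{s(i)})=rv(\alpha_i-a_{s(i)})$ in $RV(L)$ then $v(x-\alpha_i)>v(x-a_{s(i)})$ and $rv(x-\alpha_i)$ is determined by no finite amount of $rv(x-a)$ data taken factor by factor. This critical case cannot be excluded, and it is precisely here that henselianity must enter: for the minimal polynomial $q$ of $\alpha_i$ over the relevant base, suitably recentred and rescaled, one must show (by Hensel's lemma, recursing into finer clusters when the reduction $\bar q$ has multiple roots --- a recursion that terminates in residue characteristic $0$) that $\bar q$ does not vanish at $\mathrm{res}((x-a)/c)$, since otherwise $q$ would acquire a root in $K$; only then is $rv(q(x))$ a definable function of $rv(x-a)$. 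You flag this verification as ``the main obstacle'' but do not carry it out, and without it the argument does not close; this Hensel-type step is the heart of Flenner's proof.
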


\subsection{Definable subsets of RV}\label{rvsec}
\begin{sit}\label{rvsit}
Here we consider the following situation. 
We have $0\to A\to B\to C\to 0$ a short exact sequence of abelian groups 
such that $C$ is torsion free
and $B/nB$ is finite for every $n\geq1$.
The groups $A$ and $C$ come equipped with a language that extends that of groups.
In this case we endow $B$ with the language consisting of three sorts for $A$, $B$ and $C$,
the group language $(+,0,-)$ in $B$, the functions between sorts $A\to B$ and $B\to C$
and the extra constants, relations and functions on $A$ and $C$, and one predicate
for each $n$, interpreted as $nB$.
\end{sit}
The motivating example for the previous situation is the sequence
$1\to k^{\times}\to RV\to\Gamma\to 0$ in the case $\Gamma/n\Gamma$ and $k^{\times}/(k^{\times})^{n}$ are finite. 
In this case $k$ comes equipped with the ring language and $\Gamma$ with the ordered
group language.
\begin{prop}[\mbox{\cite[Section 3]{csqeses}}]\label{qeses}
In Situation \ref{rvsit}, we have that every definable subset of $B$ is a boolean
combination of cosets of $nB$ for some $n$, translates of definable subsets of $A$, and inverse images
of definable subsets of $C$.
\end{prop}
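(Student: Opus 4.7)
The plan is a relative quantifier-elimination argument. Let $\mathcal{F}$ denote the collection of boolean combinations of (i) cosets of $nB$, (ii) translates of $L_A$-definable subsets of $A$, and (iii) $\pi$-preimages of $L_C$-definable subsets of $C$. Each such set is visibly definable in the three-sorted language of Situation \ref{rvsit}, so $\mathcal{F}\subseteq\{\text{definable subsets of }B\}$ is clear; the task is the reverse inclusion, which I would prove by induction on formula complexity.

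For the atomic case, a $B$-term in a single variable $x$ of sort $B$ has the shape $mx+b$ with $m\in\mathbb{Z}$ and $b\in B$. An $L_A$-atom applied to $mx+b$ implicitly demands $mx+b\in A$ (the preimage condition $\pi(mx+b)=0$); once intersected with this preimage, what remains is a translate of an $L_A$-definable subset of $A$. An $L_C$-atom applied to $\pi(mx+b)$ is, after absorbing $\pi(b)$ as a parameter in $C$, a preimage of an $L_C$-definable subset of $C$. The predicate $P_n$ evaluated on $mx+b$ is a coset of $nB$. Closure under boolean operations therefore handles quantifier-free formulas.

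The core work is eliminating an existential quantifier. After standard prenex reduction and distribution, it suffices to handle $\exists y\,\psi(x,y)$ where $y$ is a variable of sort $B$ (the cases of sort $A$ and $C$ are similar or easier) and $\psi$ is a conjunction of atoms and negated atoms. Sort the conjuncts into coset conditions $ay+bx\equiv c\pmod{n B}$, $L_A$-conditions (each forcing some linear combination $ay+bx$ into $A$, a $\pi$-condition, together with a definable $L_A$-condition on it), and $L_C$-conditions on $\pi(x),\pi(y)$. Then split the existential via the exact sequence: $\exists y\in B$ becomes $\exists c\in C.\exists y\in\pi^{-1}(c)$. The $L_C$-conditions depend only on $c$ and $\pi(x)$; after fixing a representative $y_0\in\pi^{-1}(c)$ and writing $y=y_0+a$ with $a\in A$, the $L_A$-conditions translate into an existential over $a\in A$ purely in the $L_A$-language, using the identity $nB\cap A=nA$ (which holds because $C$ is torsion free: $nb\in A$ forces $n\pi(b)=0$, hence $\pi(b)=0$). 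The coset conditions pin down the residue class of $y_0$ modulo a common $nB$, a finite set by hypothesis. Enumerating these finitely many cosets and combining the $L_A$- and $L_C$-existentials, each of which is definable in its own sort, yields a boolean combination in $\mathcal{F}$.

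The main obstacle is that no definable section $C\to B$ need exist, so the decomposition $y=y_0+a$ has $y_0$ depending nonuniformly on $c$. The hypotheses are tuned exactly to overcome this: finiteness of $B/nB$ lets us absorb the nonuniformity by splitting into finitely many coset cases in which the representative is locally constant, and torsion-freeness of $C$ secures $nB\cap A=nA$, so that the $A$- and $C$-parts of the argument decouple cleanly inside each case.
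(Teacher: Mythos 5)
The paper does not actually prove this proposition; it cites \cite{csqeses}, Section 3, so your attempt has to be judged on its own terms. The overall strategy (relative quantifier elimination, with purity $nB\cap A=nA$ extracted from torsion-freeness of $C$, and finiteness of $B/nB$ doing the rest) is the right one and matches the kind of argument in the cited source. But the central step of your elimination is not carried out, and the one sentence you offer to justify it is false. You write that ``finiteness of $B/nB$ lets us absorb the nonuniformity by splitting into finitely many coset cases in which the representative is locally constant.'' A representative $y_0\in\pi^{-1}(c)$ cannot be locally constant on a coset of $nB$: the projection $\pi$ maps each coset of $nB$ onto a full coset of $nC$, which is infinite whenever $C$ is, so $y_0$ necessarily varies with $c$ inside every one of your finitely many cases. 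The mechanisms that actually make the $A$-part and the $C$-part decouple are different: (i) if an $A$-membership condition $my+m'x+e\in\iota(A)$ with $m\neq 0$ is present, torsion-freeness of $C$ pins $\pi(y)$ down to the unique $c$ with $mc=-(m'\pi(x)+\pi(e))$, so there is no genuine $\exists c$ left to interfere with the $A$-existential; (ii) the ambiguity in $y_0$ within a fixed fiber is absorbed by the shift $a\mapsto a-a_0$ in the $A$-existential; and (iii) the residual dependence of the $A$-parameters $\beta_j$ (defined by $\iota(\beta_j)=m_jy_0+m_j'x+e_j$) on $x$ has to be reduced, by integer linear algebra on the tuple $(\beta_j)_j$ together with the embedding $A/nA\hookrightarrow B/nB$ furnished by purity, to a single affine expression in $x$ landing in a translate of an $L_A$-definable set plus finitely many coset-of-$nB$ conditions. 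Step (iii) is where the statement's specific output format (``translates of definable subsets of $A$,'' a one-variable notion) is earned, and it is entirely absent from your sketch.

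A second, structural gap: you run the induction with a single free variable $x$ of sort $B$, but after prenexing, the matrix $\psi(x,y)$ has two free $B$-variables, and in general the induction must be formulated for arbitrarily many variables of all three sorts, with a several-variable class of core formulas (coset conditions on $\mathbb{Z}$-linear combinations, $L_C$-formulas in $\pi$ of linear combinations, $L_A$-formulas in tuples of ``$A$-parts'' of linear combinations). The one-variable statement of the proposition is then a corollary of that several-variable quantifier elimination, via exactly the reduction (iii) above; it is not itself a workable inductive invariant. Relatedly, your atomic case is slightly off syntactically: the language of Situation \ref{rvsit} has no map $B\to A$, so there are no ``$L_A$-atoms applied to $mx+b$''; the interaction with $A$ arises only through quantified $A$-variables appearing inside $\iota$, which is another reason the quantifier step, not the atomic step, carries all the weight.
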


\subsection{Definably amenable groups in $\text{NTP}_2$ theories}\label{mopsec}
In this section we state the theorem that will allow us to relate a definable 
abelian group to a subgroup of an algebraic group.

We refer to \cite{NTP2} for the definition of NTP$_2$ theories. It is shown there for instance
that simple theories and NIP theories are NTP$_2$. An overview of NIP
theories may be found in \cite{nip}, and an overview of simple theories in \cite{simple}.
It is known that ACVF is NIP, and that the theory of pseudo-finite fields
is simple. Also a henselian valued field of residue 
characteristic $0$ is NTP$_2$ if and only if
the residue field as a pure field is (see \cite{NTP2}), so a model of PL$_0$
is NTP$_2$.

A field with extra structure is said to be algebraically bounded if in any model of its theory 
the algebraic closure in the sense of model theory coincides with the algebraic closure in the sense of fields.
A model of ACVF 
is algebraically bounded and a non-trivially valued henselian valued field 
of characteristic $0$ is algebraically bounded (see \cite{algebraically-bounded} for the henselian case). An algebraically bounded field
has a notion of dimension on definable sets, defined as the maximum of the transcendence degree of a tuple in the set over a small subfield over which the set is defined. 
We omit the proof of the following basic observation.
\begin{lema}\label{dim-alg}
Let $K$ be an $\omega$-saturated field with 
extra structure which is algebraically bounded.
Suppose that $V$ is an algebraic variety over $K$ and 
$Y\subset V(K)$ is a 
definable subset of dimension $n$. 
We can identify $V(K)$ with a subset of $V$. 
If $W\subset V$ is the subvariety of V given by the Zariski closure of $Y$ in $V$,
then the dimension of $W$ is $n$.
\end{lema}

A definable group $G$ 
is definably amenable if there is a finitely additive left invariant probability
measure on the collection of definable subsets of $G$. A group is amenable if there exists
such a measure on the collection of all subsets of $G$, so an amenable group is definably
amenable. An abelian group is amenable, see for example 
\cite[Theorem 449C] {measure}.

The next result is the main theorem of this section.
\begin{prop}
Let $K$ be a field with extra structure which is NTP$_2$ and algebraically bounded.
Let $G$ be a definably amenable group in $K$. Then there exists 
a type-definable subgroup $H\subset G$ of small index in $G$,
an algebraic group $L$ over $K$ and a type-definable group morphism
$H\to L(K)$ with finite kernel.
\end{prop}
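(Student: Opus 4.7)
The plan is to follow the classical ``group chunk'' strategy adapted to the NTP$_2$ definably amenable setting, using algebraic boundedness to land inside an actual algebraic group over $K$.

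First, I would use definable amenability of $G$ to produce a global left-invariant Keisler measure $\mu$ on $G$. In an NTP$_2$ theory, such a measure yields a global $f$-generic type $p$ of $G$: one whose left translates do not fork over some small model $M_0$. The left stabilizer $H := \mathrm{Stab}(p) = \{g : g \cdot p = p\}$ is then type-definable of bounded (hence small) index in $G$, by the Hrushovski--Pillay style argument that in NTP$_2$ stabilizers of $f$-generic types behave essentially as in the stable case. This $H$ is the candidate subgroup of small index.

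Next, I would extract a generic group chunk. Take two independent realizations $a, b \models p$ over $M_0$; by $f$-genericity, $a \cdot b \models p$ as well, and one gets an abstract birational group law on $p$-generics. Algebraic boundedness enters here: on tuples from the field sort, model-theoretic algebraic closure coincides with field-theoretic algebraic closure up to finite data, so the locus of $a$ over a suitable base is carried by an absolutely irreducible variety $V$ over $K$, and the operation $a \mapsto a \cdot b$ is generically a rational map in the field-theoretic sense. One is then in the setting of Weil's group chunk theorem (in Hrushovski's modern generalization), which produces an algebraic group $L$ over $K$ together with a generic identification of its generic type with $p$. Unwinding this, one obtains a type-definable homomorphism $\pi : H \to L(K)$; its kernel is type-definable and bounded, and by construction contained in the set of $h$ fixing $a$ up to field-algebraic equivalence, hence finite by algebraic boundedness.

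The hard part, and the step that really requires both hypotheses together, is passing from the abstract NTP$_2$-theoretic generic data on $p$ to an honest algebraic group over $K$. NTP$_2$ is used to make $f$-genericity well-behaved (existence of $f$-generic types and their stabilizers having bounded index), while algebraic boundedness is precisely what allows the type-definable birational group chunk to be recognized as coming from polynomial data over $K$, so that the Weil--Hrushovski theorem outputs an \emph{algebraic} $L$ rather than a merely interpretable group. Coordinating these two ingredients cleanly, and verifying that the resulting $\pi$ is genuinely type-definable rather than only $\lor$-definable, is where the bulk of the technical work lives.
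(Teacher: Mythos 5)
You should first be aware that the paper contains no proof of this statement: it is imported verbatim as Theorem 2.19 of \cite{mop}, so there is no internal argument to compare yours against, and in the context of this paper the correct ``proof'' is simply the citation. Your outline does reconstruct, at a high level, the strategy behind that reference (a stabilizer of a suitable generic supplies $H$, and a group-chunk/algebraization argument over an algebraically bounded field supplies $L$), so the approach is not wrong in spirit.

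As a standalone proof, however, it has genuine gaps, each of which is itself a substantial theorem rather than a routine verification. First, the passage from a left-invariant Keisler measure to a global $f$-generic type, and the claim that $\mathrm{Stab}(p)$ is type-definable of bounded index, is not a ``Hrushovski--Pillay style argument that behaves essentially as in the stable case'': in NTP$_2$ one needs the measure-theoretic stabilizer theorem (applied to the $S1$-ideal attached to $\mu$) together with NTP$_2$-specific forking theory; this is precisely the technical core of \cite{mop} and cannot be waved through. Second, the group-chunk step only yields a group law defined generically on realizations of $p$; upgrading this to a homomorphism defined and type-definable on all of $H$ (not merely on a generic piece, and not merely $\lor$-definably) requires the gluing arguments in Hrushovski's version of Weil's theorem, and that is where the work sits --- not in recognizing the data as polynomial. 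Third, the finiteness of the kernel is asserted from ``fixing $a$ up to field-algebraic equivalence, hence finite by algebraic boundedness,'' but algebraic boundedness bounds $\mathrm{acl}$ of field tuples, not stabilizers; one must actually exhibit the kernel elements as algebraic over the parameters of the construction. None of these steps is likely to fail, but none is supplied, so what you have is an accurate roadmap of the cited proof rather than a proof.
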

This is \cite[Theorem 2.19]{mop}, which applies by 
\cite[Theorem 3.20]{mop}.

\subsection{Analytic functions}\label{anasec}
In this section we describe those facts about models of valued fields
with the addition of some analytic functions which we will need
in this article. These are used because various uniformization
maps, for instance the Tate uniformization map of an elliptic curve
described in Section \ref{tate-section}, are analytic functions.

For algebraically closed valued fields there is an analytic language described in
\cite{acvf-analytic}. We briefly review it here.
Let $K_0=\mathbb{F}_p^{alg}((t))$ or $\mathbb{C}((t))$ or 
the maximal unramified extension of
 $\mathbb{Q}_p$ with the $t$-adic or the $p$-adic valuation, respectively. 
Let $K_1$ be the completion of the algebraic closure of $K_0$, 
let $\Oval_1$ and $\Oval_0$ be the valuation ring of
$K_1$ and $K_0$, respectively.
A separated power series --- in two groups of variables $x=(x_1,\ldots,x_n)$ and $y=(y_1,\ldots,y_m)$ --- is a power series $f\in K_1[[x,y]]$, say
$f=\sum_{\mu,\mu}a_{\nu,\mu}x^\nu y^\mu$, which satisfies the following properties:
\begin{enumerate}
\item For any fixed multiindex $\mu$ one has $a_{\nu,\mu}\to 0$ as $|\nu|=\nu_1+\cdots+\nu_n \to \infty$.
\item There is $a_{\nu_0,\mu_0}$ with $v(a_{\nu_0,\mu_0})=\min_{\nu,\mu}v(a_{\nu,\mu})$.
\item For such an $a_{\nu_0,\mu_0}$ there are $b_k\in \Oval_1$ with $b_k\to 0$ such that
$a_{\nu,\mu}a_{\nu_0,\mu_0}^{-1}\in \Oval_0\langle b_k\rangle_k$, for all $\nu,\mu$.
\end{enumerate}
Here $\Oval_0\langle b_k\rangle _k$ denotes the closure of $\Oval_0[b_k]_k$ in $\Oval_1$.

The set of separated power series is denoted by $K_1\langle x\rangle [[y]]_s$.
Every separated power series $f(x,y)$ produces a map
$f:\Oval_1^n\times \Mval_1^m\to K_1$. If we add to the valued field language 
a function symbol for every separated power series
and interpret it as $f$ in $\Oval_1^n\times \Mval_1^m$ and $0$ outside, then we obtain a language
and a model in this language.

We call a model of the theory of $K_1$ in this language an algebraically closed valued 
field with analytic functions.

\begin{prop}\label{c-minimal}
Let $K$ be a model of ACVF with analytic functions, 
if $X\subset K$ is definable in the analytic language
then it is definable in the valued field language.
\end{prop}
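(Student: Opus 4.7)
The plan is to combine the quantifier elimination theorem for ACVF with analytic functions (the main result of \cite{acvf-analytic}) with Weierstrass preparation for separated power series. By QE in the analytic language, any one-variable definable $X \subset K$ is a boolean combination of atomic formulas, so it suffices to show that each set of the form $\{x \in K : v(f(x)) \leq v(g(x))\}$ or $\{x \in K : f(x) = 0\}$ is valued-field definable, where $f$ and $g$ are terms built from separated power series, polynomials and the field operations, with parameters in $K$.

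First I would deal with the domain issue. Since a separated power series $F(y_1,\ldots,y_n,z_1,\ldots,z_m)$ is interpreted as $0$ outside $O^n \times M^m$, each occurrence of such an $F$ in a term $f(x)$ contributes a valued-field definable condition on $x$ (namely $v(t_i(x)) \geq 0$ or $> 0$ for the polynomial arguments $t_i(x)$) outside of which $f$ vanishes identically. Partitioning $K$ into the finitely many regions determined by these conditions reduces the problem to a ball $B \subset K$ on which $f$ and $g$ are equal to honest convergent analytic functions of $x$.

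On such a ball, Weierstrass preparation applies to any nonzero analytic function $h(x)$ obtained by substituting polynomial (or affine) arguments into a separated power series: we get $h(x) = u(x)P(x)$ where $u$ is an analytic unit on $B$ (so $v(u(x))$ is constant on $B$) and $P(x) \in K[x]$ is a polynomial. The condition $v(f(x)) \leq v(g(x))$ therefore reduces on $B$ to a condition of the form $v(c) + v(P_f(x)) \leq v(c') + v(P_g(x))$ for certain constants $c, c' \in \Gamma$ and polynomials $P_f, P_g$, which is definable in the valued field language; similarly $\{x \in B : f(x) = 0\} = \{x \in B : P_f(x) = 0\}$ is finite. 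Assembling the pieces, $X$ is a boolean combination of Swiss cheeses in the sense of Proposition \ref{holly}, hence valued-field definable.

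The main technical obstacle is justifying the use of Weierstrass preparation uniformly in the parameters. One must show that after a valued-field definable partition of the parameter space (and of $K$ into balls) one can rewrite each analytic term as a unit times a polynomial in $x$, with the unit's valuation constant on the relevant piece; this is where one really leans on the Lipshitz--Robinson machinery and the existence of a good term structure in the analytic language. Once this uniform preparation is in place, the reduction to Holly's classification is routine.
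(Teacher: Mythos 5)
The paper does not actually prove this proposition: it records it as the known C-minimality of algebraically closed valued fields with separated analytic structure and cites \cite{1danacvf}. Your outline is essentially the strategy of that reference (quantifier elimination for the analytic language, preparation of one-variable analytic terms into units times polynomials, then Holly's Swiss cheese decomposition, Proposition \ref{holly}), so you have identified the right route; but as written it has two concrete gaps, and they are exactly where the real work lies.

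First, quantifier elimination for separated power series does not hold in the raw term language: one must close the language under restricted division (the operators $D_0$, $D_1$ of Lipshitz) and under substitution of such terms back into separated power series. The atomic formulas you must handle therefore involve arbitrarily nested compositions of power series and divisions, not merely separated power series applied to polynomial arguments. Your domain-splitting step then becomes circular: the condition that an inner argument lie in $O$ or in $M$ is a condition on the valuation of an analytic term, which is only known to be valued-field definable after the preparation you are trying to establish. This forces an induction on term complexity in which preparation and definability of the auxiliary conditions are proved simultaneously. Second, the uniform Weierstrass preparation itself (unit times polynomial, piecewise on a definable partition, uniformly in the parameters) is not the classical preparation for strictly convergent series: for separated power series with $M$-type variables it rests on the strong noetherian property of the rings of separated power series and on the analysis of generalized rings of fractions. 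You flag this as the main technical obstacle but do not address it, and since it constitutes essentially the whole content of \cite{1danacvf}, the proposal should be regarded as a correct plan rather than a proof. If you only need the statement, do as the paper does and cite the C-minimality theorem directly.
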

This property is known as C-minimality. For a proof, see
\cite{1danacvf}. Note that the authors of that article do not
work with $\omega$-saturated models, so they need to prove
an equivalent property of C-minimality which is uniform in definable
families. 
 
Analytic languages have been defined in other contexts. We describe another such 
context we use here. Let $k_1$ be a field of characteristic $0$ and take
$K_1=k_1((t))$ with the $t$-adic valuation. We can consider power series
$f\in K_1[[x]]$ of the form $f(x)=\sum_\nu a_\nu x^\nu$ where $a_\nu\to 0$ in the $t$-adic
topology for $|\nu|=\nu1+\cdots+\nu_n\rightarrow\infty$. If we add to the valued field language a function symbol for each $f$, 
interpreted as $f(x)$ inside $\Oval_1^n$ and as $0$ outside, then we have a language 
and a model in it.

We call a model of the theory of $K_1$ in this 
language a henselian valued field of residue characteristic $0$
and value group a $Z$-group with analytic functions.
 
Just as in Proposition \ref{c-minimal} one has the following.
\begin{prop}
If $K$ is a henselian valued field of residue characteristic $0$ and value group a $Z$-group with analytic functions,
then a subset $X\subset K$ definable in the analytic
language is definable in the valued field language
\end{prop}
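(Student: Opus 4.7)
The plan is to mirror the proof of Proposition \ref{c-minimal}, with Holly's description of one-variable definable sets replaced by Flenner's description (Proposition \ref{rv}), which is the correct analogue in residue characteristic zero. Concretely, I want to show that after adding the analytic function symbols no new subsets of $K$ are definable, by reducing every analytic term $f(x)$ to a polynomial, up to an $\mathrm{rv}$-constant unit, on each piece of a finite partition of $K$.

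First I would establish a relative quantifier elimination in the analytic language, showing that every definable $X\subseteq K$ is a Boolean combination of sets of the form $\{x\in O:(\mathrm{rv}(t_1(x)),\ldots,\mathrm{rv}(t_n(x)))\in Y\}$ for analytic terms $t_i$ and $Y\subseteq RV_0^n$ definable. This is the residue-characteristic-zero analytic analogue of Basarav's theorem: henselianity is already built into the analytic language (the power series provide implicit roots of polynomials), the value group is a $\mathbb{Z}$-group, and Proposition \ref{qeses} governs the RV-sort; these ingredients together let one push quantifiers out of the field sort into RV, exactly as in the pure henselian case.

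Next I would apply Weierstrass preparation to each analytic function $f(x)=\sum a_n x^n$ with $a_n\to 0$. After scaling so that the minimum of $v(a_n)$ equals $0$ and is achieved at a largest index $d$, one has $f=u\cdot P$ with $P$ monic of degree $d$ and $u$ a unit in the convergent power-series ring, so that $u$ is nowhere vanishing on $O$. Partition $O$ into a small ball around each root of $P$ together with a piece disjoint from all roots; on each such piece $u$ has constant $\mathrm{rv}$, so $\mathrm{rv}(f(x))=c\cdot\mathrm{rv}(P(x))$ piecewise. Iterating this for every analytic subterm occurring in a description of $X$, each $\mathrm{rv}(t_i(x))$ is replaced by $\mathrm{rv}(\tilde P_i(x))$ for a polynomial $\tilde P_i$. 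Combined with the relative quantifier elimination, this exhibits $X$ on each piece in the form handled by Proposition \ref{rv}, hence definable in the pure valued field language.

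The main obstacle is the interaction between the two steps. After preparing one $f$, its roots typically lie in an algebraic extension (so the recentring must use new parameters and is compatible only with a refinement of the partition), and inner analytic subterms of the form $f(g(x))$ with $g$ itself analytic force the Weierstrass decompositions of all terms to be carried out simultaneously. The cleanest bookkeeping is induction on term complexity, refining the finite partition at each Weierstrass step and tracking the resulting units. With this care in place, the argument of \cite{1danacvf} adapts with only minor modifications, and the relative quantifier elimination becomes the technical heart of the proof.
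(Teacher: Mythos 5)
Your two-step architecture (relative quantifier elimination down to $RV$, then a one-variable preparation turning analytic terms into polynomials up to units of constant $\mathrm{rv}$) is in outline the same as the paper's, but the way you propose to discharge the first step contains a genuine gap. You assert that the relative quantifier elimination for the analytic language can be pushed through ``exactly as in the pure henselian case'' because henselianity is built into the power series. That is not so: before any elimination has taken place the formulas involve \emph{multivariable} analytic terms composed with one another, and eliminating field quantifiers in their presence is precisely where the Weierstrass division and preparation machinery for separated analytic structures is needed --- it cannot be mimicked from the algebraic argument, where one only ever divides by polynomials. The paper's proof consists of verifying that the language fits the Cluckers--Lipshitz framework of analytic structures (taking $A_{m,n}=A_{m,0}=K_1\langle \eta\rangle$ as in their Example 4.4(2)) and then citing their relative quantifier elimination, Theorem 6.3.7; without that verification, or an equivalent amount of work, your ``technical heart'' is simply missing. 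Note also the ordering problem in your plan: you invoke Weierstrass preparation only \emph{after} the quantifier elimination, but the elimination itself is where essentially all of the Weierstrass-type arguments have to be carried out.

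Your second step is closer to being right, and in fact easier than you make it: since a unit $u$ of $K_1\langle x\rangle$ satisfies $\mathrm{rv}(u(x))=\mathrm{rv}(u_0)$ on all of $O$, no partition is needed for a single uncomposed term, and once every condition is expressed through $\mathrm{rv}$ of polynomials over $K$ you are already in the valued field language without appealing to Flenner. The genuine difficulties you flag --- compositions $f(g(x))$ and roots of the prepared polynomials lying in proper algebraic extensions of $K$ (so that the recentering points are not available as the $a_i$ of Proposition \ref{rv}) --- are exactly what the notion of hensel minimality packages; the paper sidesteps your bookkeeping by citing Theorem 6.2.1 of the hensel-minimality paper to get directly that every analytically definable $X\subset K$ is a pullback of a subset of $RV_0^n$ under $x\mapsto(\mathrm{rv}(x-a_1),\dots,\mathrm{rv}(x-a_n))$ with $a_i\in K$. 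Describing the adaptation of the C-minimality argument for algebraically closed fields as ``minor modifications'' understates the issue, since the absence of roots in $K$ is precisely what makes the residue characteristic zero henselian case different.
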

\begin{proof}
This situation fits into the general framework in 
\cite[Section 4.1]{analytic-cluckers},
where, as in \cite[Example 4.4 (2)]{analytic-cluckers} one takes 
$A_{m,n}=A_{m,0}=K_1\langle \eta\rangle$. 
In \cite[Theorem 6.3.7]{analytic-cluckers} a valued field quantifier elimination is proven in
a certain language we will not describe here.
From that relative quantifier elimination it follows that if $X\subset RV^n$ is definable
in the analytic language, then it is definable in the algebraic language.
Finally the framework in \cite{analytic-cluckers} satisfies the conditions of Hensel
minimality described 
in \cite{hensel-minimal}, see \cite[Theorem 6.2.1]{hensel-minimal},
and so one has that every analytically definable set $X\subset K$ is the inverse
image of $Y\subset RV_0^n$ under some map of the form 
$x\mapsto (rv(x-a_1),\dots,rv(x-a_n))$ and so it is definable in the valued field language.
\end{proof}
When $K$ is a model of ACVF 
we will only need to consider $K_1=K_0^{alg}$ for $K_0$ a complete discrete valuation field 
and the analytic
functions taken with all coefficients in a finite extension of $K_0$. These power
series seem algebraically simpler, as only complete discrete valuation rings appear.
It is not clear however that for example $K_1$ is an elementary submodel of its completion
in this analytic language, or that $K_1$ is C-minimal. In residue characteristic
$0$, $K_1$ is C-minimal by \cite{hensel-minimal}.
\subsection{Elliptic curves}\label{ellsec}
Next we review some properties of elliptic curves. 
We use valued fields which are typically not discretely valued or even of rank 1, 
so we hope
this section is useful even for readers familiar with elliptic curves.

An elliptic curve over a field $K$ is given by an equation of the form
\[y^2+a_1xy+a_3y=x^3+a_2x^2+a_4x+a_6\]
Where $a_i\in K$. This equation is assumed to have discriminant non-zero. The discriminant
of the equation is a certain polynomial with integer coefficients in the $a_i$, 
see \cite[Section III.1]{silverman-1}.

 Taking the projective closure one obtains
a projective curve $E\subset P^2_K$. In any field extension of $K$ there is only
one point at infinity $[0:1:0]$. 

This curve is a commutative algebraic group, 
the identity is the point at infinity, 
see  \cite[section III.2]{silverman-1} for 
the equations of the group law.

If $E(K)$ is the set of $K$-points, then this is a definable group in any language that extends the ring language.
If $u\in K^{\times}$ and $r,s,t\in K$ 
then we have an isomorphism of algebraic groups $E'\to E$
determined by the equations $x=u^2x'+r$ and $y=u^3y'+u^2sx'+t$, 
the corresponding equation
in $E'$ is given by coefficients $a_i'$ given in
\cite[Table 3.1]{silverman-1}.
With the relation on discriminants being of note, $u^{12}\Delta'=\Delta$.
If $K$ is a valued field then there is a change of variables 
such that $E'$ is defined by an equation with coefficients in $\Oval$. 

Now suppose $K$ is a valued field, and suppose given a Weierstrass equation with 
coefficients in $\Oval$.
The corresponding homogeneous equation
\[y^2z+a_1xyz+a_3yz^2=x^3+a_2x^2z+a_4xz^2+a_6z^3\]
produces a closed sub-scheme $\mathcal{E}\subset P^2_O$ with generic fiber $E$.
From the properness of $\mathcal{E}$, 
 we have $\mathcal{E}(\Oval)=E(K)$.
Reducing the equations mod $\Mval$ we obtain $\tilde{E}=\mathcal{E}\times_{\Oval} k\subset P^2_k$,
and a map $E(K)=\mathcal{E}(\Oval)\to \tilde{E}(k)$.
We denote by $\tilde{E}_0$ the smooth part of $\tilde{E}$.
If the discriminant of the Weierstrass equation has valuation $0$
then $\tilde{E}$ is an elliptic curve over $k$, so $\tilde{E}_0=\tilde{E}$, and in
this case we say the given Weierstrass equation has good reduction.
 Otherwise 
$\tilde{E}$ has only one singular point
and it is $k$-rational, in this case $\tilde{E}_0$ is an algebraic
group over $k$ isomorphic to the additive or a possibly twisted multiplicative group,
see \cite[Proposition III.2.5]{silverman-1} and the remark that follows, 
for a definition
of the group law and a proof. 
If $\tilde{E}_0$ is isomorphic to the additive group we say the equation has 
additive reduction. If it is isomorphic to the multiplicative group we say
the equation has split multiplicative reduction. If it is isomorphic to a twisted
multiplicative group, we say the equation has a non-split multiplicative reduction.

We denote by $E_0(K)$ the inverse image of $\tilde{E}_0(k)$ under the reduction map
$E(K)\to \tilde{E}(k)$. Then $E_0(K)$ is a subgroup of $E(K)$ and the map
$E_0(K)\to \tilde{E}_0(k)$ is a group homomorphism, see  \cite[Proposition VII.2.1]{silverman-1}. 
Note that this proposition is stated for complete discrete
valuation rings, but the proof works for any valued field.
Alternatively, one can prove this by observing that it is enough
to prove it for the algebraic closure of $K$, and then use elementary
equivalence in ACVF to see that it is enough to consider the algebraic closure
of a $\mathbb{F}_p((t)), \mathbb{Q}_p $ or $\mathbb{C}((t))$; 
and finally see that as the objects
use only finitely many parameters then it is enough to see the property
for finite extensions of the complete discrete valuation fields described, 
which are themselves complete discrete valuation fields.

It is easy to see directly that 
if the field $K$ is henselian then the map $E_0(K)\to \tilde{E}_0(k)$ 
is surjective, we omit this verification.
In any case we will call the kernel $E_0^{-}(K)$.

Inspecting the definition one gets that $E_0^{-}(K)$ consists of the point at infinity 
together with the set of pairs $(x,y)\in K$ satisfying the Weierstrass equation, such that
$v(x)<0$ or equivalently $v(y)<0$. If $(x,y)\in E_0^{-}(K)$ then from the Weierstrass equation one gets $2v(y)=3v(x)$, so that if we let $\gamma$ be such that  $-6\gamma=2v(y)=3v(x)$, we have
$v(y)=-3\gamma$, $v(x)=-2\gamma$ and $\gamma=v(\frac{x}{y})\in \Gamma$.

\begin{lema}\label{filtration-elliptic}
Assume $K$ is a valued field, and we have a Weierstrass equation over $\Oval$.
Then for the set $E_0^{-}(K)$ described above the map $E_0^{-}(K)\to M$ given by 
$(x,y)\mapsto -\frac{x}{y}$ and sending the point at infinity to $0$ is injective,
and satisfies that
\begin{enumerate}
\item $f(a+b)=f(a)+f(b)+s$ for an $s$ with $v(s)\geq v(f(a))+v(f(b))$.
\item $f(-a)=-f(a)+r$ for an $r$ with $v(r)\geq 2v(f(a))$.
\end{enumerate}
If $K$ is henselian the map is surjective.
\end{lema}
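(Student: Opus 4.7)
I would work in the coordinate chart $z = -x/y$, $w = -1/y$ centered at the point at infinity, with inverse $x = z/w$, $y = -1/w$. Substituting into the Weierstrass equation and multiplying by $w^3$ converts it to
\[w = z^3 + a_1 zw + a_2 z^2 w + a_3 w^2 + a_4 z w^2 + a_6 w^3.\quad(\ast)\]
For a point $(x,y)\in E_0^-(K)$ other than the point at infinity, with $r := v(x/y)$, the identity $2v(y)=3v(x)$ recorded just before the lemma forces $v(z) = r > 0$ and $v(w) = 3r > 0$, so $f$ indeed lands in $M$.

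Both injectivity and (under the henselian hypothesis) surjectivity would then follow from Hensel's lemma applied to $(\ast)$ viewed as a polynomial in $w$ with $z \in M$ fixed. Writing $g(w) := w - z^3 - a_1 zw - a_2 z^2 w - a_3 w^2 - a_4 z w^2 - a_6 w^3$, one has $v(g(0)) = 3v(z) > 0$ while $g'(0) = 1 - a_1 z - a_2 z^2$ is a unit of $O$, so the uniqueness part of Hensel's lemma (valid in any valued field) forces any root $w \in M$ of $g$ to be unique; hence a point of $E_0^-(K)$ is determined by its $z$-coordinate. When $K$ is henselian, the existence part of Hensel's lemma produces such a root for each $z \in M$, and $(z/w,-1/w)$ is the required preimage.

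For (1) and (2), iterating $(\ast)$ yields $w$ as a power series $w(z) = z^3 + a_1 z^4 + \cdots \in O[[z]]$ which, evaluated at $z \in M$, returns the $w$-coordinate attached to the corresponding point. Substituting $w_i = w(z_i)$ into the addition and negation formulas of Algorithm III.2.3 of \cite{silverman-1} expresses the group operations on $E_0^-(K)$ as power series over $O$ in the $z$-coordinates. The negation formula $-(x,y) = (x,-y-a_1 x-a_3)$ becomes after substitution
\[ f(-a) = \frac{-f(a)}{1 - a_1 f(a) - a_3 w(f(a))} = -f(a) + f(a)^2\, h(f(a)) \]
for some $h \in O[[z]]$, which is (2). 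For addition, the fact that the point at infinity is the identity forces the resulting series $F(z_1, z_2)$ to satisfy $F(z,0) = z = F(0,z)$ and therefore to be of the form
\[ F(z_1,z_2) = z_1 + z_2 + z_1 z_2\, G(z_1,z_2) \]
with $G \in O[[z_1,z_2]]$; setting $s = f(a)f(b)\, G(f(a),f(b))$ then gives (1).

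The step I expect to be the main obstacle is the integrality claim: that after substituting $w = w(z)$ into the rational addition and negation formulas, the resulting series genuinely lie in $\mathbb{Z}[a_1,\ldots,a_6][[z_1,z_2]] \subseteq O[[z_1,z_2]]$ (so in particular converge on $M^2$ to values in $K$). This is precisely the construction of the formal group law attached to an elliptic curve carried out in Chapter IV of \cite{silverman-1}, and I would cite it rather than reprove it here.
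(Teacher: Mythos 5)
Your treatment of injectivity and surjectivity is correct and is essentially the paper's argument: the paper also reduces to a Hensel's-lemma situation in a coordinate chart at infinity (it uses $z=x/y$, $w=xz^2$, getting $w^2=w^3$ modulo $M$ with unique nonzero root $w=1$, while you use Silverman's $(-x/y,-1/y)$ chart; the two are interchangeable).

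For items (1) and (2) there is a genuine gap. The lemma is stated for an \emph{arbitrary} valued field $K$, and your argument evaluates the power series $w(z)$, $F(z_1,z_2)$, $h(z)$ at elements of $M$. Your parenthetical claim that integrality of the coefficients makes these series ``converge on $M^2$ to values in $K$'' is false in that generality: for a non-complete rank-one field (say $\mathbb{Q}$ with the $p$-adic valuation) the sums exist only in the completion and are typically not in $K$, and for a value group of rank $>1$ the terms $c_nz^n$ with $v(c_nz^n)\geq nv(z)$ need not even tend to infinity, so the series does not converge in any sense. Citing Chapter IV of \cite{silverman-1} does not repair this, since the passage from the formal group law to an actual group law on points (IV.3) is carried out over a \emph{complete} local ring. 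The paper bridges exactly this gap with a model-theoretic reduction: the two inequalities are first-order statements in the valued field language (with the $a_i$, $a$, $b$ quantified), so one may pass to $K^{\mathrm{alg}}$, invoke completeness of ACVF in each pair of characteristics to reduce to $\overline{\mathbb{Q}_p}$, $\overline{\mathbb{C}((t))}$ or $\overline{\mathbb{F}_p((t))}$, and then observe that the finitely many relevant elements lie in a finite extension of the base field, which \emph{is} a complete discretely valued field --- and only there apply the formal-group computation of Silverman IV.1. You need either this reduction or a direct valuation estimate on the rational addition and negation formulas (via the slope $\lambda=(w_2-w_1)/(z_2-z_1)$), neither of which appears in your plan. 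Note also that what must be shown is an inequality $v(f(a+b)-f(a)-f(b))\geq v(f(a))+v(f(b))$ about honest elements of $K$ computed by rational functions; once one works in a complete field where the series identity is meaningful, this inequality transfers back to $K$ because the elements and their valuations are unchanged under the field extensions used --- but that is precisely the step you have to supply.
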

\begin{proof}
To see that the map is surjective when $K$ is henselian one can do the change of variables
$z=\frac{x}{y}$ and $w=yz^3=xz^2$, so that $x=wz^{-2}$ and $y=wz^{-3}$.
In this case the Weierstrass equation becomes
\[w^2+a_1zw^2+a_3z^3w=w^3+a_2z^2w^2+a_4z^4w+a_6z^6\]
Reducing mod $\Mval$ we get the equation
$w^2=w^3$ which has a unique non-zero solution at $w=1$, so we may apply Hensel's lemma.
This also shows the map is injective.

To see the two displayed equations we may replace $K$ by its algebraic closure,
then because this is first order expressible we may reduce to the case in which
$K$ is the algebraic closure of $\mathbb{Q}_p$ or $\mathbb{C}((t))$ or $\mathbb{F}_p((t))$.
In this case the Weierstrass equation has coefficients in a finite extension of the fields
mentioned and $a$ and $b$ also, so we may assume $K$ is such a finite extension.
In particular it is a complete discrete valuation field.

In this case the discussion in \cite[Section IV.1]{silverman-1} shows that
for $z_1,z_2\in \Mval$ we have $f(f^{-1}(z_1)+f^{-1}(z_2))=z_1+z_2+z_1z_2F(z_1,z_2)$
for $F(T,S)$ a power series with coefficients in $\Oval$.
Similarly one has $f(-f^{-1}(z))=-z+z^2g(z)$ where $g(S)$ is a power series with 
coefficients in $\Oval$. 
See also \cite[Remark IV.2.1]{silverman-1} and 
\cite[Example IV.3.1.3]{silverman-1}.
\end{proof}
If $K$ is any valued field, and we have a given Weierstrass equation with coefficients
in $\Oval$, then inside $E_0^{-}$ and for $a\in\Gamma$ with $a>0$ we have the sets $E_a$
given by the pairs $(x,y)$ such that $v(\frac{x}{y})\geq a$ 
together with the point at infinity, 
and $E_a^{-}$ the set of pairs $(x,y)$ such that $v(\frac{x}{y})>a$ together with the point
at infinity.
\begin{lema}\label{filtration-elliptic-subquotient}
With the above notation
$E_a$ and $E_a^-$ are subgroups of $E(K)$ and the map
$E_a\to B_a/B_a^-$ given by $(x,y)\mapsto -\frac{x}{y}$ and which sends the 
point at infinity to $0$, is a group map. 
When $K$ is henselian it is surjective.
\end{lema}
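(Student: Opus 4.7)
The plan is to derive all three assertions directly from Lemma \ref{filtration-elliptic}, which already gives that $f:E_0^-(K)\to M$, $(x,y)\mapsto -x/y$, is injective (surjective in the henselian case) and almost additive, with error terms whose valuations are at least twice the valuations of the inputs. Everything reduces to reading off valuation estimates from those two near-additivity identities, restricted to the inputs whose images lie in $B_a$ or $B_a^-$.

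For the subgroup property of $E_a$, take $P_1,P_2\in E_a$, so $v(f(P_i))\geq a$. Applying item (1) of Lemma \ref{filtration-elliptic}, $f(P_1+P_2)=f(P_1)+f(P_2)+s$ with $v(s)\geq v(f(P_1))+v(f(P_2))\geq 2a$. Since $a>0$, both summands on the right-hand side have valuation at least $a$, hence $v(f(P_1+P_2))\geq a$ and $P_1+P_2\in E_a$. Item (2) gives $f(-P_1)=-f(P_1)+r$ with $v(r)\geq 2v(f(P_1))\geq 2a$, so $-P_1\in E_a$. The same argument with strict inequalities, using $2a>a$ (still valid since $a>0$), shows $E_a^-$ is a subgroup.

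For the group morphism assertion, observe that the estimates just used place both error terms $s$ and $r$ in $B_a^-$, because $v(s),v(r)\geq 2a>a$. Therefore modulo $B_a^-$ one has $f(P_1+P_2)\equiv f(P_1)+f(P_2)$ and $f(-P_1)\equiv -f(P_1)$, so the induced map $E_a\to B_a/B_a^-$ is additive. For surjectivity when $K$ is henselian, Lemma \ref{filtration-elliptic} gives that $f:E_0^-(K)\to M$ is surjective; given any $z\in B_a$ (so $z\in M$ since $a>0$), pick $P\in E_0^-(K)$ with $f(P)=z$, and then $v(f(P))=v(z)\geq a$ puts $P\in E_a$ with image $z+B_a^-$.

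There is no real obstacle: the analytic content is already absorbed into Lemma \ref{filtration-elliptic}, and what remains is bookkeeping of valuations. The only point worth emphasizing is that the error-term valuations are strictly larger than $a$ (thanks to $a>0$), which is precisely what makes the quotient by $B_a^-$ absorb the non-additivity of $f$.
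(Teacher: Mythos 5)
Your proof is correct and follows the same route as the paper, which simply states that the lemma "follows easily from Lemma \ref{filtration-elliptic}"; you have supplied the valuation bookkeeping that the paper leaves implicit. The key observation you highlight --- that $v(s),v(r)\geq 2a>a$ forces the error terms into $B_a^-$ --- is exactly the intended content.
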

\begin{proof}
This follows easily from Lemma \ref{filtration-elliptic}.
\end{proof}
\begin{defi}\label{minimal-divisible}
If $K$ is a valued field and $E$ is an elliptic curve
then a Weierstrass equation of $E$ will be called minimal
if the valuation of the discriminant is minimal among all possible 
Weierstrass equations for $E$.
\end{defi}
Such an equation need not exist.

Assume that $K$ is a valued field elementary equivalent as a valued field to a 
discrete valuation field. In this case given an elliptic curve $E$ over $K$, a minimal
Weierstrass equation  exists because this is first order expressible and true
in a discretely valued field.

Given a minimal Weierstrass equation, another Weierstrass equation
related to it by change of variables is minimal if and only if we can obtain one
from the other through a change of variables 
$x=u^2x'+r$ and $y=u^3y'+u^2sx'+t$, with
$u\in \Oval^{\times}$ and $r,s,t\in \Oval$, see
 \cite[Proposition VII.1.3]{silverman-1}. This proposition is stated
for complete discrete valuation rings, but the proof is valid for any valued
field.

In case the residue characteristic is not $2$ or $3$ then there is a change of variables
with $u\in \Oval^{\times}$ and $r,s,t\in \Oval$ (coming from completing the square and cube)
that produces a minimal Weierstrass equation of the form 
\begin{equation}\label{wei-sim} y^2=x^3+Ax+B
\end{equation}
See \cite[Section III.1]{silverman-1} for details.
Two such Weierstrass equations are related through the change of variables
$x=u^2x'$ and $y=u^3y'$, 
and two minimal Weierstrass equations through a $u\in \Oval^{\times}$.
In this case the coefficients of $E'$ are given by 
$u^4A'=A$ and $u^6B'=B$.

It follows from this discussion that if $K$ has residue field
of characteristic not $2$ or $3$ and the value group is a $Z$-group then 
a minimal Weierstrass equation exists and can be found in the form
\eqref{wei-sim}, and an equation of that form with $A,B\in \Oval$ is minimal
 if and only if $v(A)<4$ or $v(B)<6$.

If $K$ has residue field of characteristic not $2$ or $3$ and the value group
is $2$- and $3$-divisible, then the discussion above also shows that a minimal
Weierstrass equation exists and can be found in the form \eqref{wei-sim}
and a Weierstrass equation of that form with $A,B\in \Oval$
is minimal if and only
if $v(A)=0$ or $v(B)=0$.

If we have a valued field $K$ and an elliptic curve $E$ over $K$,
we say $E$ has good, additive, split
multiplicative or non-split multiplicative reduction if a minimal Weierstrass equation
of $E$ does, so when we speak of the reduction type of an elliptic curve
by convention we assume a minimal Weierstrass equation for it exists.

 Similarly the notations $E_0$, $E_0^-$, $E_r$ and $E_r^-$ are understood
to be relative to a minimal Weierstrass equation for $E$. 

We mention that in case $K$ is a valued field with residue characteristic
not $2$ or $3$ and $2$- and $3$-divisible value group
 then an elliptic curve over $K$ has good or
 multiplicative reduction, by the criterion in 
\cite[Proposition III.2.5]{silverman-1}.

The following two lemmas are a relatively minor point.
\begin{lema}\label{smooth-henselian}
Suppose $(R,\Mval,k)$ is a henselian local ring and suppose
$f:X\to Y$ is a smooth map of schemes over $R$. Then for every $x\in X(k)$ and 
$y\in Y(R)$ such that $\bar{y}=f(x)$, there exists $z\in X(R)$ such that
$\bar{z}=x$ and $f(z)=y$.
\end{lema}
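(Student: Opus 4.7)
The plan is to reduce immediately to the case where $Y = \operatorname{Spec}(R)$ by base change, and then invoke the standard fact that a smooth scheme over a henselian local ring satisfies the lifting property for $k$-points.

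First, I would form the fiber product $X' = X \times_Y \operatorname{Spec}(R)$, where $\operatorname{Spec}(R) \to Y$ is the morphism defined by the section $y \in Y(R)$. Since smoothness is preserved under base change, the projection $\pi : X' \to \operatorname{Spec}(R)$ is smooth. The hypothesis $\overline{y} = f(x)$ says exactly that the two morphisms $\operatorname{Spec}(k) \to X$ and $\operatorname{Spec}(k) \to \operatorname{Spec}(R)$ (coming from $x$ and from the closed point of $\operatorname{Spec}(R)$) agree after composing with $f$ and $y$ respectively, so by the universal property of the fiber product they determine a $k$-point $x' \in X'(k)$ lifting the closed point of $\operatorname{Spec}(R)$. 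If we can produce $z' \in X'(R)$ with $\overline{z'} = x'$, then $z := \operatorname{pr}_X(z') \in X(R)$ satisfies $\overline{z} = x$ and $f(z) = y$ by construction.

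It therefore suffices to prove the following: if $\pi : X' \to \operatorname{Spec}(R)$ is smooth with $R$ henselian local and residue field $k$, then every $x' \in X'(k)$ lifts to an $R$-point of $X'$. The main obstacle is really this step, and it is handled by the standard local structure theorem for smooth morphisms: after passing to an affine open neighborhood of $x'$, the morphism $\pi$ factors as an \'etale morphism $X' \to \mathbb{A}^n_R$ followed by the projection $\mathbb{A}^n_R \to \operatorname{Spec}(R)$. The image of $x'$ in $\mathbb{A}^n(k) = k^n$ lifts trivially to a point $a \in \mathbb{A}^n(R) = R^n$ (lift coordinates one by one from $k$ to $R$). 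Now the fiber of the \'etale morphism $X' \to \mathbb{A}^n_R$ over $a$ is a finite \'etale $R$-algebra whose special fiber contains the $k$-point $x'$; by the defining property of henselian rings (equivalently, the standard form of Hensel's lemma for \'etale morphisms) this $k$-point lifts uniquely to an $R$-point, yielding the desired $z' \in X'(R)$.

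This is a routine argument once the local structure of smooth morphisms and the characterization of henselian rings via \'etale lifts are granted, and no additional hypotheses on $R$ beyond henselianness are needed.
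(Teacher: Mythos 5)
The paper does not actually give a proof of this lemma: it states ``This lemma is well known and we omit the proof,'' so there is nothing to compare against line by line. Your argument is the standard one and is essentially correct: the base change along the section $y\colon\operatorname{Spec}(R)\to Y$ correctly reduces the statement to the surjectivity of $X'(R)\to X'(k)$ for $X'\to\operatorname{Spec}(R)$ smooth, and that surjectivity follows from the local structure of smooth morphisms (\'etale over $\mathbb{A}^n_R$) together with the characterization of henselian local rings via lifting along \'etale morphisms. The only inaccuracy is the phrase ``finite \'etale $R$-algebra'': the fiber of the \'etale morphism $X'\to\mathbb{A}^n_R$ over the section $a\in\mathbb{A}^n(R)$ is \'etale, hence quasi-finite, over $R$, but need not be finite. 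This is harmless, since the relevant equivalent form of henselianness --- any \'etale $R$-scheme with a $k$-rational point over the closed point admits a section through that point --- does not require finiteness (alternatively, shrink to a standard \'etale neighborhood and apply Hensel's lemma to a one-variable polynomial, or invoke the structure of quasi-finite separated schemes over a henselian local ring to extract the finite piece containing $x'$). With that wording corrected, your proof is a complete and appropriate justification of the lemma the paper leaves unproved.
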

This lemma is well known and we omit the proof.
\begin{lema}\label{multiplication-by-n-henselian}
Let $K$ be a henselian valued field of residue of characteristic $0$ with group
a $Z$-group.
Assume given an elliptic curve $E$ over $K$. Then $nE_0(K)=\pi^{-1}n\tilde{E}_0(k)$,
where $\pi:E_0(K)\to \tilde{E}_0(k)$ is the reduction map.
\end{lema}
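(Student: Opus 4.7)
The inclusion $nE_0(K)\subseteq\pi^{-1}n\tilde{E}_0(k)$ is immediate from $\pi$ being a group homomorphism, so the content of the lemma is the reverse direction. The plan is to realize $E_0(K)$ as $\mathcal{E}_0(O)$ for a smooth group scheme $\mathcal{E}_0$ over $O$, show that the multiplication-by-$n$ map is smooth on $\mathcal{E}_0$, and then invoke Lemma \ref{smooth-henselian} to lift a residue-field preimage to a preimage in $E_0(K)$.

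Since $K$ has value group a $Z$-group it is elementarily equivalent to a discretely valued field, so as recorded in the discussion preceding Definition \ref{minimal-divisible}, $E$ admits a minimal Weierstrass equation. Let $\mathcal{E}\subset\mathbb{P}^2_O$ be the associated projective model and $\mathcal{E}_0$ the smooth locus of $\mathcal{E}\to\operatorname{Spec}(O)$. Then $\mathcal{E}_0$ is a smooth commutative group scheme over $O$ with generic fiber $E$ and special fiber $\tilde{E}_0$, and $\mathcal{E}_0(O)=E_0(K)$ by properness of $\mathcal{E}$ together with the definition of $E_0(K)$ as the preimage of the smooth locus under reduction.

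Next I want $[n]\colon\mathcal{E}_0\to\mathcal{E}_0$ to be smooth over $O$. Since both $K$ and $k$ have characteristic $0$, $n$ is invertible in each, and $[n]$ is a nonzero isogeny on each fiber: étale on $E$ because $E$ is an abelian variety in characteristic $0$, and étale on $\tilde{E}_0$ because the latter is a one-dimensional smooth commutative group in characteristic $0$ (either $\mathbb{G}_a$, a possibly twisted $\mathbb{G}_m$, or an elliptic curve, per the discussion in Section \ref{elliptic-curves-prelim}). The fibral criterion for étaleness applied to the map of smooth $O$-schemes $[n]$ then gives that $[n]$ is étale, hence smooth, on all of $\mathcal{E}_0$ over $\operatorname{Spec}(O)$.

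Finally, given $y\in E_0(K)=\mathcal{E}_0(O)$ with $\pi(y)=n\tilde{x}$ for some $\tilde{x}\in\tilde{E}_0(k)=\mathcal{E}_0(k)$, I apply Lemma \ref{smooth-henselian} to the smooth morphism $[n]$ with the point $\tilde{x}\in\mathcal{E}_0(k)$ and the section $y\in\mathcal{E}_0(O)$; the compatibility hypothesis $\overline{y}=[n](\tilde{x})$ is exactly $\pi(y)=n\tilde{x}$. The lemma produces $z\in\mathcal{E}_0(O)=E_0(K)$ with $nz=y$, proving $y\in nE_0(K)$. The main non-formal ingredient, and hence the main potential obstacle, is the fibral-criterion argument for global smoothness of $[n]$ over $O$; the rest is bookkeeping.
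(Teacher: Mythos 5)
Your proof is correct and its core is the same as the paper's: identify $E_0(K)$ with $\mathcal{E}_0(O)$ for the smooth locus $\mathcal{E}_0$ of the Weierstrass model, establish that $[n]$ is smooth on $\mathcal{E}_0$, and lift via Lemma \ref{smooth-henselian}. The genuine difference is organizational but not trivial: the paper first observes that the statement is first order expressible and replaces $K$ by $k((t))$, so that the group-scheme structure on $\mathcal{E}_0$ and the identity $\mathcal{E}_0(O)=E_0(K)$ can be quoted verbatim from Silverman's treatment over complete discrete valuation rings; you instead work directly over the given henselian $O$. Since a nonstandard model of a $Z$-group yields a non-Noetherian valuation ring whose spectrum has more than two points, your route obliges you to know that the smooth locus of a Weierstrass curve is a group scheme over an arbitrary base (true, but not covered by the reference the paper uses) and to apply the fibral criterion for \'etaleness over \emph{all} primes of $O$, not just the generic and closed ones as you state; this is harmless here because every residue field in sight has characteristic $0$, but it should be said. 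What your route buys is a cleaner justification of the key smoothness claim: you derive smoothness of $[n]$ from \'etaleness of $[n]$ on each characteristic-$0$ fiber, whereas the paper deduces it from smoothness of the multiplication map of a smooth group scheme, an inference that is too quick as stated (it would wrongly give smoothness of $[p]$ in residue characteristic $p$) and implicitly also uses the characteristic-$0$ hypothesis. Either add the model-theoretic reduction to $k((t))$ as in the paper, or supply references for the two scheme-theoretic facts over a general valuation ring; with that, your argument stands.
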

The previous lemma should be true in more general henselian valued fields,
starting with a Weierstrass equation with coefficients in $\Oval$, but we do not know a proof.
\begin{proof}
This property is first order expressible so without loss of generality $K=k((t))$ is a 
complete discretely valued field. In this case we get that $\mathcal{E}_0\subset \mathcal{E}$,
the set of smooth points is a group scheme over $\Oval$ with generic fiber $E$ and with
$E_0(K)=\mathcal{E}_0(\Oval)$, see 
\cite[Corollary IV.9.1]{silverman-2} and \cite[Corollary IV.9.2]{silverman-2}.
Recall that for any group scheme $X$ over a scheme $S$ which is smooth the multiplication
map is smooth, and so the multiplication by $n$ map is smooth $\mathcal{E}_0\to \mathcal{E}_0$. Then this becomes a consequence of Lemma \ref{smooth-henselian}.
\end{proof}

\subsection{Tate uniformization}\label{tate-section}
In this section we review the Tate uniformization of an elliptic curve.
We will be interested in the case where $K$ is an algebraically closed field
of residue characteristic not $2$ or $3$ and when $K$ is a pseudo-local field of 
residue characteristic $0$ so we treat these two cases separately.

Assume first that $K$ is an $\omega$-saturated algebraically closed valued 
field of residue characteristic not 
$2$ or $3$ with analytic functions as in Section \ref{anasec}. 
Define $K_0$ to be $\mathbb{F}_p^{alg}((t))$ or $\mathbb{C}((t))$ or 
the maximal unramified extension of 
$\mathbb{Q}_p$
according to the characteristic of $K$ and the residue field of $K$, 
and let $K_1$ be the completion of the algebraic closure of $K_0$.
Now there are functions
$a_4:\Mval_1\to K_1$ and $a_6:\Mval_1\to K_1$ given by a separated power series in one
variable $K_1[[q]]_s$, defined in 
\cite[Theorem V.3.1]{silverman-2}. 
In fact they are given by elements of $\mathbb{Z}[[q]]$.
Then one defines the elliptic curve $E_q$ with equation 
$y^2+xy=x^3+a_4(q)x+a_6(q)$. This equation has discriminant different from $0$ for all
$q\in \Mval_1$.

One has functions $X(u,q)$ and $Y(u,q)$ defined as 
\[X(u,q)=\frac{u}{(1-u)^2}+\sum_{d\geq 1}(\sum_{m|d}m(u^m+u^{-m}-2))q^d\]
\[Y(u,q)=\frac{u^2}{(1-u)^3}+\sum_{d\geq 1}(\sum_{m|d}\frac{(m-1)m}{2}u^m-\frac{m(m+1)}{2}u^{-m}+m)q^d\]
For $-v(q)<v(u)<v(q)$ and $u\neq 1$. This produces a map $v^{-1}(-v(q),v(q))\to E_q(K_1)$
that takes $u=1$ to the point at infinity, and $u\neq 1$ to 
$((X(u,q),Y(u,q))$. This maps extends in a unique way to a surjective group map 
$K_1^{\times}\to E_q(K_1)$ with kernel $q^{\mathbb{Z}}$. 

This is proven in 
\cite[Theorem V.3.1]{silverman-2}. This theorem 
is stated for $K$ a $p$-adic field, but the proof works equally well
for a complete rank one valued field. 
Note that \cite[Remark V.3.1.2]{silverman-2}
states that the elementary proof given is valid in this generality except
for the proof of surjectivity, but in fact the proof of 
surjectivity also works
in the more general context with some small modifications detailed below.
We use this elementary proof instead of the proof using $p$-adic analytic
methods in \cite{roquette} mentioned in \cite[Remark V.3.1.2]{silverman-2}
as we will make use of some extra information obtained in the course of the
proof, for the proof of Proposition \ref{tate-uniformization}(6).

Now we review the proof of \cite[Theorem V.3.1]{silverman-2}.
The maps $X$ and $Y$ are definable in the analytic language.
Indeed $X(u,q)=L_1(u)+L_2(uq,q)+L_3(u^{-1}q,q)$
where $L_1(u)=\frac{u}{(1-u)^2}$, and $L_2$, $L_3$ are given by separated power series
in two variables $K_1[[v,q]]_s$. 
Indeed, $L_2(v,q)=\sum_{k\geq 1, m\geq 1} mv^mq^{m(k-1)}$, and 
$L_3(v,q)=\sum_{k\geq 1, m\geq 1}(mv^mq^{m(k-1)}-2q^{mk})$ are even given by power series
with coefficients in $\mathbb{Z}$. A similar consideration applies to $Y$.
The map $(X,Y)$ produces an injective group map $K_1^{\times}/q^{\mathbb{Z}}\to E_q(K_1)$,
this follows from formal identities on $\mathbb{Q}(u)[[q]]$.

For the Tate map $t:K_1^{\times}\to E_q(K_1)$ one has
$t(\Oval_1^{\times})\subset E_{q,0}(K_1)$ and $t(1+\Mval_1)\subset E_q^-(K_1)$.

Under the bijections $\Mval_1\to 1+\Mval_1$ and $E_q^-(\Mval_1)\to \Mval_1$ described in 
Lemma \ref{filtration-elliptic} we get a map
$\phi:\Mval_1\to \Mval_1$ given by $x\mapsto x(1+\sum_{m\geq 1}\gamma_m x^m)$, and 
the coefficients belong to the ring $\Oval_0\langle q\rangle$ described in 
Section \ref{anasec}. So the argument in 
\cite[Theorem V.3.1]{silverman-2} shows that
$\phi$ is surjective, and the inverse is given as another such power series. 

Also one gets that $t(\Oval_1^{\times})=E_{q,0}(K_1)$
by reducing mod $1+\Mval_1$ and $E_{q,0}^-(K_1)$ respectively.

After this one decomposes $E_q(K_1)\setminus E_{q,0}(K_1)$ 
into some disjoint sets $U_r, V_r, W$ for 
$0<r<\frac{1}{2}v(q)$, defined as
\[U_r=\{(x,y)\in E_q(K_1)\mid v(y)>v(x+y)=r\}\]
\[V_r=\{(x,y)\in E_q(K_1)\mid v(x+y)>v(y)=r\}\]
\[W=\{(x,y)\in E_q(K_1)\mid v(y)=v(x+y)=\frac{1}{2}v(q)\}\]
See \cite[Lemma V.4.1.2]{silverman-2}, its proof generalizes to our context
with minimal notational modifications.
Further the proof of \cite[Lemma V.4.1.4]{silverman-2} generalizes
to our context with minimal notational modifications to show 
that each of these sets belong to the same
$E_{q,0}(K_1)$ coset, and one sees in the proof that $V_r=-U_r$.

Now we claim that if $0<v(u)=r<\frac{1}{2}v(q)$ then
$t(u)\in U_r$.
Indeed in the formula for $X(u,q)$ one has that if 
$a_d=\sum_{m|d}m(u^m+u^{-m}-2)$ then $v(a_d)\geq -dr$ and so
$v(a_dq^d)\geq -dr+dv(q)>dr\geq r=v(\frac{u}{(1-u)^2})$, so we conclude
$v(X(u,q))=r$.
A similar computation shows all the summands in $Y$ have valuation $>r$ so one gets
$v(Y(u,q))>r$ and $t(u)\in U_r$ as required.

Next we claim that if $r=v(u)=\frac{1}{2}v(q)$ then $t(u)\in W$.
Indeed, in this case the computation above shows that for the $d$ term in $X$, 
$v(a_dq^d)\geq dr>r$ if $d>1$, so modulo $B_r^-$ we have that
$X$ is $\frac{u}{(1-u)^2}+qu^{-1}$.
Similarly, modulo $B_r^{-}$ we have that $Y$ becomes
$-qu^{-1}$. So we obtain $v(X(u,q)+Y(u,q))=r=v(Y(u,q))$ as required.

Now since the sets 
\[U_r'=\{u\in K_1\mid v(u)=r\}\]
\[V_r'=\{u\in K_1\mid v(u)=-r\}\]
\[W'=\{u\in K_1\mid v(u)=\frac{1}{2}v(q)\}\]
for $0<r<\frac{1}{2}v(q)$ 
are $\Oval_1^{\times}$-cosets satisfying $V_r'^{-1}=U_r'$ we conclude that 
$U_r,V_r$ and $W$ are $E_{q,0}^{-}(K_1)$ cosets satisfying 
$t(U_r')=U_r$, $t(V_r')=V_r$ and $t(W')=W$, and $t$ is surjective, as required.

Now by elementary equivalence we get that 
the functions $a_4$ and $a_6$ produce functions denoted by the same symbol 
$a_4,a_6:\Mval\to K$, and we obtain for every $q\in \Mval$ an elliptic curve $E_q$ and a function
$(X,Y):v^{-1}(-v(q),v(q))\setminus\{1\}\to E_q$ that extends to a surjective group homomorphism
$O(q)\to E_q$ with kernel $q^{\mathbb{Z}}$. If further $E$ has multiplicative reduction
then there is a unique $q$ 
such that $E$ is isomorphic to $E_q$ over $K$. See 
\cite[Theorem V.5.3]{silverman-2}, where it is stated in less generality,
 but the proof works equally well for 
a complete valued field of rank 1 and residue characteristic not 2 or 3.

We collect the previous discussion into a theorem.
\begin{prop}\label{tate-uniformization}
Suppose $K$ is a model of ACVF
 with analytic functions and residue field
of characteristic not $2$ or $3$ and let $E$ be an elliptic curve over $K$ with 
multiplicative reduction.
Then there is a surjective group homomorphism $t:O(q)\to E(K)$ with kernel $q^\mathbb{Z}$
ind-definable in the analytic language. 
We have further that:
\begin{enumerate}
\item $v(q)$ equals the valuation of the discriminant of a minimal 
Weierstrass equation of $E$.
\item The image of $\Oval^{\times}$ is $E_0$.
\item The image of $U_0^-$ is $E_0^-$.
\item For the bijections $f:U_0^-\to \Mval$ and $g:E_0^-\to \Mval$ given by 
$f(x)=x-1$ and $g(x,y)=-\frac{x}{y}$ we have 
$v\circ g\circ t=v\circ f$. 
\item The image of $U_r$ is $E_r$ and of $U_r^-$ is $E_r^-$ for $r\in \Gamma_{>0}$.
\item The composition $E(K)\to O(q)/q^{\mathbb{Z}}\to O(v(q))/\mathbb{Z}v(q)$ is definable
in the valued field language.
\item If $K$ is $\omega$-saturated 
the image of $o(q)$ in $E(K)$ is type-definable in the valued field language.
\end{enumerate}
\end{prop}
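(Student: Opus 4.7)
The plan is to assemble items (1)--(6) from the explicit analytic description of the Tate map given in the paragraphs preceding the proposition, transferring to $K$ by elementary equivalence from $K_1$, and then to handle (7) and (8) by extracting the Tate valuation in terms of the valued-field-definable Weierstrass-coordinate decomposition of $E(K)\setminus E_0$. The preceding discussion records over $K_1$ that $a_4$, $a_6$, $X$, $Y$ are analytic (with coefficients in $O_0$ or $\mathbb{Z}$), that $t$ is surjective with kernel $q^{\mathbb{Z}}$, that $t(O_1^{\times})=E_{q,0}$ and $t(1+M_1)=E_{q,0}^{-}$, and that the induced bijection $M_1\to M_1$ via $f$ and $g$ is an analytic power series with leading term $z\mapsto z$. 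All of these are first-order in the analytic language and transfer to $K$, yielding (1), (3), (4), (5), and (6) at once; item (6) because $U_r$, $U_r^{-}$ and $E_r$, $E_r^{-}$ are cut out by the same valuation inequalities on corresponding coordinates under the valuation-preserving bijection.

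For item (2) I would invoke the product formula $\Delta_{E_q}=q\prod_{n\geq 1}(1-q^n)^{24}$ of Theorem V.3.1 of \cite{silverman-2}, which gives $v(\Delta_{E_q})=v(q)$ because $v(1-q^n)=0$ for $q\in M$. Theorem 5.3 of \cite{silverman-2} gives that the defining Weierstrass equation for $E_q$ is minimal; this is first-order and transfers.

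The substantive content is item (7). Write $\psi:E(K)\to O(v(q))/\mathbb{Z}v(q)$ for the composition. Since $t(O^{\times})=E_0$ by item (3) and $v(O^{\times}q^{\mathbb{Z}})=\mathbb{Z}v(q)$, the map $\psi$ is identically zero on $E_0$ and induces an isomorphism $E(K)/E_0\cong O(v(q))/\mathbb{Z}v(q)$. To describe $\psi$ on $E(K)\setminus E_0$, I use the decomposition
\[E(K)\setminus E_0 \;=\; \bigsqcup_{0<r<v(q)/2}(U_r\cup V_r)\cup W,\]
with $U_r,V_r,W$ defined by valuation inequalities in Weierstrass coordinates as in the preceding discussion; these are visibly valued-field-definable. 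The identifications $t(U_r')=U_r$, $t(V_r')=V_r$, $t(W')=W$, established over $K_1$ and transferred, then yield a direct valued-field formula for $\psi$: on $U_r=\{(x,y):v(y)>v(x+y)=r\}$, $\psi$ returns the class of $v(x+y)$; on $V_r=\{(x,y):v(x+y)>v(y)=r\}$, it returns $-v(y)$; on $W$ it returns $v(q)/2$. This exhibits $\psi$ as valued-field-definable.

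Item (8) then follows: $o(v(q))$ injects into $O(v(q))/\mathbb{Z}v(q)$, so $t(o(q))=\psi^{-1}(o(v(q)))$, and $o(v(q))=\bigcap_{n\geq 1}\{y:-v(q)/n<y<v(q)/n\}$ is a small intersection of $\Gamma$-definable sets. Pulling back by the valued-field-definable $\psi$ exhibits $t(o(q))$ as type-definable in $E(K)$ in the valued field language, with $\omega$-saturation keeping the intersection small. The principal obstacle is item (7): although the Tate map itself is only ind-definable in the analytic language, its valuation component must be recovered as a bona fide valued-field formula, and the $(U_r,V_r,W)$ decomposition is exactly the tool that makes this possible.
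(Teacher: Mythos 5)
Your overall strategy coincides with the paper's: items (1)--(5) of the proposition (your (2)--(6)) are first-order in the analytic language and transfer from $K_1$ by elementary equivalence, and the valued-field definability of $E(K)\to O(v(q))/\mathbb{Z}v(q)$ is read off from the decomposition of $E_q(K)\setminus E_{q,0}(K)$ into the pieces $U_r$, $V_r$, $W$ together with the identities $t(U_r')=U_r$, $t(V_r')=V_r$, $t(W')=W$; your deduction of the last item from the definability of this composition also agrees with the paper (and your sign convention differs from the paper's only by an overall inversion, which is harmless).

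The step you skip, however, is exactly where the paper's proof concentrates its work: the passage from the Tate curve $E_q$, in whose Weierstrass coordinates the sets $U_r,V_r,W$ and the identifications $t(U_r')=U_r$ were established, to the given curve $E$ with its own minimal Weierstrass equation, relative to which $E_0$, $E_r$, $E_r^-$ are defined. Your formula ``on $U_r=\{(x,y):v(y)>v(x+y)=r\}$ the map returns the class of $v(x+y)$'' is asserted for points of $E(K)$, but a priori the change of variables $x=u^2x'+r_0$, $y=u^3y'+u^2sx'+t_0$ relating a minimal model of $E$ to the Tate equation of $E_q$ could perturb $v(y)$ and $v(x+y)$ and destroy the decomposition. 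The paper's proof shows, by normalizing the minimal equation of $E$ to the form $y^2+xy=x^3+a_4'x+a_6'$ and reducing the transformation formulas for $c_4$, $c_6$, $a_1$, $a_2$, $a_3$ modulo $B_{v(q)}$ (this is where residue characteristic $\neq 2,3$ enters), that $u\equiv 1$ and $r_0\equiv s\equiv t_0\equiv 0$ mod $B_{v(q)}$, hence $x\equiv x'$, $y\equiv y'$ mod $B_{v(q)}$, so the valuation formulas survive the transport. For the bare definability claim you could instead patch the argument more cheaply by noting that the algebraic isomorphism $E\cong E_q$ is given by polynomials over $K$ and is therefore itself valued-field-definable with parameters, so definability of the composition for $E_q$ transports to $E$; but this yields less than the paper's computation, which produces an explicit formula in the coordinates of $E$ and identifies the $E_0(K)$-cosets there. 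As written, the $E_q$-to-$E$ compatibility is a genuine missing step (it also lurks, more mildly, behind your transfer of items (2)--(5), since $E_r$ is defined via the minimal equation of $E$; there it is easy because $v(x/y)$ is preserved on $E_0^-$ by a unit change of variables).
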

\begin{proof}
The properties 1-5 
of the Tate uniformization map described here are all first order expressible
so they follow from the corresponding result in $K_1$.

For property 6, let
 $E$ be an elliptic curve over $K$ with multiplicative reduction, understood
as in the remark following Definition \ref{minimal-divisible}, then $E$ is isomorphic
to $E_q$ for a $q\in M$.
We conclude that if $E$ has multiplicative reduction then it has a Weierstrass equation
of the form $y^2+xy=x^3+a_4'x+a_6'$ with $v(a_4')>0$ and $a_6'=a_4'+c$ 
with $v(c)\geq 2v(a_4)$.
Such an equation is minimal in the sense of Definition \ref{minimal-divisible}.
Now assume that $E_q$ is isomorphic to such an elliptic curve. Then $E$ is obtained from
$E_q$
via a change of variables $x=u^2x'+r$ and $y=u^3y'+u^2sx'+t$ with 
$u\in \Oval^{\times}$ and $r,s,t\in \Oval$.
From the discriminant formula $\Delta=-a_6+a_4^2+72a_4a_6-64a_4^3-432a_6^2$, and
$u^{12}\Delta'=\Delta$ we obtain that $v(\Delta)=v(a_4')=v(q)=l$. 
Now reducing the equations in \cite[Table 3.1]{silverman-1} mod $B_l$ 
(for $c_4$ and $c_6$)
we obtain $u^4=1$ and $u^6=1$ mod $B_l$. From here we get $u^2=1$ mod $B_l$.
Replacing $u$ by $-u$ if necessary (this has the effect of composing $E_q\to E$ with
$-:E\to E$) we conclude $u=1$ mod $B_l$.  Reducing the change of variable equations
in \cite[Table 3.1]{silverman-1} mod $B_l$ (for $a_1,a_2$ and $a_3$; note that
this step uses that the residue field
has characteristic not $2$ or $3$) we obtain
$s=r=t=0$ mod $B_l$. So if $(x',y')$ is the image of 
$(x,y)\in E_q(K)\setminus E_{q,0}(K)$ under the map $E_q\to E$
then $x'=x$ and $y'=y$ mod $B_l$.

So if we define $U_r', V_r'$ and $W'$ by the same formulas as 
$U_r, V_r, W$ but using the Weierstrass equation in $E$
then the isomorphism $E_q\to E$ takes $U_r$ to $U_r'$, 
$V_r$ to $V_r'$ and $W$ to $W'$ (so in particular $U_r',V_r'$ and $W'$ are the $E_0(K)$
cosets). Further if the image of $(x,y)$ is $(x',y')$ then $v(x+y)=v(x'+y')$ in $U_r$
and $v(y)=v(y')$ in $V_r$.
So the map $w:E(K)\to (-\frac{1}{2}v(a_4),\frac{1}{2}v(a_4)]$ is given explicitly
as $w(P)=0$ if $P\in E_{0}(K)$, $w(x,y)=\frac{1}{2}v(a_4)$ if $v(y)=v(x+y)$,
$w(x,y)=v(x)$ if $0<v(y)<v(x+y)$ and $w(x,y)=-v(x)$ if $0<v(x+y)<v(y)$.

Property 7 follows from 6.
\end{proof}

In our situation we are able to prove using the explicit formulas that 
$E(K)\to O(q)/q^{\mathbb{Z}}\to O(v(q))/\mathbb{Z}v(q)$ is definable in the valued
field language.

Related to this 
we can ask whether for $K$ a model of ACVF with analytic structure
and $x\in K^n$, if $y\in \Gamma$ is definable from $x$ in
the analytic language then it is definable from $x$ in the valued field language.
Also we can ask whether if $a\in K$ is algebraic over $x$ in the valued field language
and definable from $x$ in the analytic language then it is definable from $x$ 
in the valued field language. 

The second question together with C-minimality is equivalent to saying 
that for any one-dimensional
set $X$ definable in the valued language and any set $Y\subset X$ definable in the analytic
language, $Y$ is definable in the valued field language.

The first question together with the second one implies that for any map 
$f:X\to \Gamma$ definable in the analytic language, where $X$ is one dimensional definable in the valued field language, $f$ is definable in the valued field language.

Using \cite[Proposition 2.4]{one-dimensional-p-adic} we obtain the following.

\begin{prop}\label{unicovell}
Suppose $K$ is an $\omega$-saturated 
algebraically closed valued field of residue characteristic not $2$ or 
$3$ with analytic functions 
and let $E$ be an elliptic curve over $K$ with multiplicative reduction.
Let $t:O(q)\to E$ be the Tate uniformization map of Proposition \ref{tate-uniformization}.
Then there is a group $O_E$ which is ind-definable in the valued field language and 
a group isomorphism $O_E\to O(q)$ ind-definable in the analytic language, such that the 
composition $O_E\to O(q)\to E$ is ind-definable in the valued field language.
\end{prop}

For reference we mention the following related result.
\begin{lema}[mbox{ \cite[Lemma 29]{groups-in-z}}]\label{r-extension}
Suppose $M$ is a model of any theory.
Let $G$ be a strict ind-definable abelian group
 and $T\subset G$ a type-definable subgroup.
 Let $H$ be a strict ind-definable group
and $\phi:T\to H$ a type-definable group morphism.
Suppose given a surjective group morphism
$\pi:G\to \mathbb{R}^s$ with kernel $T$. Assume that:
\begin{enumerate}
\item If $T\subset U\subset G$ is definable then there is 
$0\in V\subset \mathbb{R}^s$ open such that
$\pi^{-1}(V)\subset U$.
\item If $T\subset U\subset G$ is definable then $\pi(U)$ is bounded.
\end{enumerate}
Then $\phi$ extends to a unique ind-definable group morphism $\phi:G\to H$.
\end{lema}
We only use this when $T\subset G$ are 
$o(a)\subset O(a)$ in $K^{\times}$ or $o_E(a)\subset O_E(a)$ for an elliptic curve of
multiplicative reduction, or $o(\gamma)\subset O(\gamma)$ in $\Gamma$.

Now suppose $K$ is a henselian valued field of residue characteristic $0$ and value group 
a $Z$-group with analytic functions, as defined in Section \ref{anasec}.
Then the discussion before goes through and produces the following two statements.

\begin{prop}\label{tate-uniformization-2}
Suppose $K$ is an $\omega$-saturated 
henselian valued field of residue characteristic $0$ and 
value group a $Z$-group with analytic functions.
Let $E$ be an elliptic curve over $K$ with 
split multiplicative reduction.
Then there is a surjective group morphism $t:O(q)\to E(K)$ with kernel $q^\mathbb{Z}$
ind-definable in the analytic language. 
We have further that:
\begin{enumerate}
\item $v(q)$ equals the valuation of the discriminant of a minimal 
Weierstrass equation of $E$.
\item The image of $\Oval^{\times}$ is $E_0$.
\item The image of $U_0^-$ is $E_0^-$.
\item For the bijections $f:U_0^-\to \Mval$ and $g:E_0^-\to \Mval$ given by 
$f(x)=x-1$ and $g(x,y)=-\frac{x}{y}$ we have 
$v\circ g\circ t=v\circ f$.
\item The image of $U_r$ is $E_r$ and that of $U_r^-$ is $E_r^-$,  where $r\in \Gamma_{>0}$.
\item The composition $E(K)\to O(q)/q^{\mathbb{Z}}\to O(v(q))/\mathbb{Z}v(q)$ is definable
in the valued field language.
\item 
The image of $o(q)$ in $E(K)$ is type-definable in the valued field language.
\end{enumerate}
\end{prop}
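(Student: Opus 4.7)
The plan is to mirror the proof of Proposition \ref{tate-uniformization} essentially verbatim, since the new setting (henselian residue characteristic $0$, value group a $Z$-group, with analytic functions) is set up so that all the key building blocks of Tate uniformization transfer. First I would observe that the series $a_4(q), a_6(q) \in \mathbb{Z}[[q]]$ and the double series defining $X(u,q)$ and $Y(u,q)$ (decomposed as $L_1(u)+L_2(uq,q)+L_3(u^{-1}q,q)$ and its analogue for $Y$) have coefficients in $\mathbb{Z}$ tending to $0$ in the $t$-adic topology, so they qualify as analytic functions in the framework defined at the end of Section \ref{analytic}, producing in particular maps $a_4,a_6: M \to K$ and maps $(X,Y)$ on the appropriate domains.

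Next I would work in the complete discrete model $K_1 = k_1((t))$ with $k_1$ of characteristic $0$, where Theorem 3.1 and Lemma 4.1 of \cite{silverman-2} apply directly: one obtains the surjective group morphism $K_1^\times \to E_q(K_1)$ with kernel $q^\mathbb{Z}$, the decomposition of $E_q(K_1) \setminus E_{q,0}(K_1)$ into cosets $U_r,V_r,W$, and their preimages $U_r',V_r',W'$ under $t$. The argument that $t$ restricts to bijections matching valuations, together with the surjectivity via Hensel's lemma in Lemma \ref{filtration-elliptic}, works exactly as in the proof of Proposition \ref{tate-uniformization}, producing items (1)--(5) for $E_q$ in $K_1$.

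I would then transfer to the $\omega$-saturated monster $K$: the assertion that the analytically defined map $t:O(q) \to E(K)$ is a surjective group morphism with kernel $q^\mathbb{Z}$ satisfying the valuation-compatibility and coset-mapping statements (1)--(5) is first-order expressible over the parameter $q$ (or a schematic family in $q$), and so follows from the truth in $K_1$ together with elementary equivalence in the analytic language, plus Theorem 5.3 of \cite{silverman-2} identifying $E$ with some $E_q$. For (6) I would repeat the coefficient analysis from the previous proof: since $E$ has split multiplicative reduction, one normalizes to a Weierstrass equation $y^2+xy=x^3+a_4'x+a_6'$ with $v(a_4')>0$ and $a_6'=a_4'+c$, $v(c)\geq 2v(a_4')$; this is minimal by Definition \ref{minimal-divisible}. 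The isomorphism $E_q \to E$ is given by $x=u^2x'+r,\; y=u^3y'+u^2sx'+t$ with $u \in O^\times$, $r,s,t \in O$, and using $u^{12}\Delta' = \Delta$ together with the reduction of Table 3.1 of \cite{silverman-1} modulo $B_{v(q)}$ one deduces $u \equiv \pm 1$ and $r,s,t \equiv 0$ modulo $B_{v(q)}$; here the residue characteristic $0$ assumption trivially covers the earlier ``not $2$ or $3$'' hypothesis. Hence the map $w: E(K) \to O(v(q))/\mathbb{Z}v(q)$ is given by the same explicit piecewise formula as before in terms of $v(y),v(x+y),v(x)$, which is clearly definable in the valued field language. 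Finally, item (7) follows from (6) because $o(q) = \bigcap_{n} B_{nv(q)}$ is type-definable in the valued field language, so its image in $E(K)$ is the preimage under the valued-field-definable map $w$ of the type-definable subgroup of infinitesimals in $O(v(q))/\mathbb{Z}v(q)$.

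The main potential obstacle is the transfer step: in the analytic language over a henselian valued field of residue characteristic $0$ with $Z$-group value group, one needs the elementary statements about $t, X, Y$ over $K_1$ to carry over to $K$, which requires the relative quantifier elimination and the Ax--Kochen--Ershov-type transfer guaranteed by the framework of \cite{analytic-cluckers} (as used in the unnumbered proposition preceding this statement). Once that is granted, the rest of the argument is a straightforward adaptation of the proof of Proposition \ref{tate-uniformization}, with the simplification that no delicate residue characteristic issues arise.
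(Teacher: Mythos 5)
Your proposal matches the paper exactly: the paper's entire proof of this proposition is the single remark that ``the discussion before goes through,'' i.e.\ one repeats the proof of Proposition \ref{tate-uniformization} in the henselian residue-characteristic-$0$ analytic framework, which is precisely what you do (including the observation that the residue characteristic $0$ hypothesis subsumes the earlier ``not $2$ or $3$'' step and that item 7 follows from item 6). The only blemishes are cosmetic and do not affect the argument: the integer coefficients of $a_4$, $L_2$, $L_3$ do not literally ``tend to $0$'' $t$-adically in characteristic $0$ (convergence on the relevant domain comes from the substituted variable lying in $M$), and $o(q)$ is $\bigcap_n\{x\mid n|v(x)|<v(q)\}$ rather than $\bigcap_n B_{nv(q)}$.
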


\begin{prop}
Suppose $K$ is an $\omega$-saturated 
henselian valued field of residue characteristic $0$ and 
value group a $Z$-group with analytic functions.
Let $E$ be an elliptic curve over $K$ with split multiplicative reduction.
Let $t:O(q)\to E$ be the Tate uniformization map of Proposition \ref{tate-uniformization-2}.
Then there is a group $O_E$ which is ind-definable in the valued field language and 
a group isomorphism $O_E\to O(q)$ ind-definable in the analytic language, such that the 
composition $O_E\to O(q)\to E$ is ind-definable in the valued field language.
\end{prop}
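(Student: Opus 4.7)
The plan is to apply Proposition 2.4 of \cite{one-dimensional-p-adic}, whose proof transfers to this setting with essentially no change: the required inputs are the formal properties recorded in Proposition \ref{tate-uniformization-2}, and these hold in the present context. That result constructs $O_E$ as a valued-field $\lor$-definable ``resolution'' of the Tate uniformization, exploiting the fact that although $t:O(q)\to E(K)$ itself is only ind-analytic, enough of its structure is already captured in the valued field language to define a group $O_E$ with the advertised properties.

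Concretely, I would first form the $\lor$-definable pullback
\[
O_E=\{(P,\gamma)\in E(K)\times O(v(q)) : w(P)\equiv \gamma \pmod{\mathbb{Z}v(q)}\},
\]
where $w:E(K)\to O(v(q))/\mathbb{Z}v(q)$ is the map of Proposition \ref{tate-uniformization-2}(6); this is valued-field $\lor$-definable. Using Lemma \ref{filtration-elliptic} together with property (4) of Proposition \ref{tate-uniformization-2}, I would refine the data on each fibre to encode not only $v(t^{-1}(P))$ but the full coset of $t^{-1}(P)$ modulo an analytic correction of strictly larger valuation. The group operation on $O_E$ is then defined piecewise using the valued-field definable identification $E_0^-\cong M$ of Lemma \ref{filtration-elliptic} and addition on $O(v(q))$, without invoking $t$ directly, so it is ind-definable in the valued field language; the first projection $\pi:O_E\to E(K)$ is similarly ind-definable in the valued field language.

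Finally I would take $\phi:O_E\to O$ to be the unique bijection for which $t\circ\phi=\pi$. Since $t$ is ind-analytic, so is $\phi$; it is then a group isomorphism because the group structure on $O_E$ has been set up precisely so that $t$ becomes a homomorphism; and the composition $t\circ\phi=\pi$ is ind-definable in the valued field language, which is the final requirement.

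The main obstacle is verifying that the Tate map near the identity splits coherently into a valued-field definable ``leading'' part, supplied by Lemma \ref{filtration-elliptic} and property (4) of Proposition \ref{tate-uniformization-2}, together with an analytic correction of strictly larger valuation, in a way compatible with the group law. This coherent splitting and gluing is the technical heart of Proposition 2.4 of \cite{one-dimensional-p-adic}; it uses $\omega$-saturation of $K$ to pass from the piecewise type-definable data to a genuine ind-definable morphism.
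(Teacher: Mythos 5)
Your proposal matches the paper's own treatment: the paper gives no independent argument here, but simply notes that the discussion for the ACVF case carries over and invokes Proposition 2.4 of \cite{one-dimensional-p-adic}, with the properties listed in Proposition \ref{tate-uniformization-2} serving as the required inputs. Your additional sketch of how that construction proceeds is a reasonable reconstruction, but it goes beyond what the paper itself records.
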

Finally we have in the other reduction cases the following.
\begin{prop}
Let $K$ be a henselian field of residue characteristic $0$ and value group a $Z$-group.
Let $E$ be an elliptic curve over $K$ of good, additive or non-split multiplicative 
reduction. Then 
$E(K)/E_0(K)$ is a finite group of order at most $4$.
\end{prop}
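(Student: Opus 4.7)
The plan is to reduce the statement to a classical fact about complete discretely valued fields, where it becomes part of the Kodaira--N\'eron classification.

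First I would fix a minimal Weierstrass equation for $E$, which exists by the assumption that $\Gamma$ is a $Z$-group together with the discussion following Definition \ref{minimal-divisible}. Relative to this equation $E_0(K)$ is uniformly definable as the preimage of $\tilde{E}_0(k)$ under the reduction map, so ``$[E(K):E_0(K)]\le 4$'' becomes a first-order sentence in the valued field language with parameters in the coefficients of the equation. By Ax--Kochen--Ershov I may then replace $K$ by any elementarily equivalent valued field of the same shape, in particular by the complete discretely valued field $K=k((t))$ for a characteristic zero field $k$.

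Now I would split into the three reduction types. If $E$ has good reduction then $\tilde{E}_0=\tilde{E}$ and $E_0(K)=E(K)$ directly, giving index $1$. If $E$ has non-split multiplicative reduction, let $K'/K$ be the unramified quadratic extension; over $K'$ the curve $E$ acquires split multiplicative reduction, and the classical Tate uniformization (as presented in \cite{silverman-2}) gives a Galois-equivariant isomorphism $E(K')/E_0(K')\cong\mathbb{Z}/v(\Delta)\mathbb{Z}$ on which the non-trivial element of $\mathrm{Gal}(K'/K)$ acts by $-1$, this sign being precisely what encodes non-splitness of the reduction. Since $E_0$ is cut out by a Weierstrass equation whose coefficients lie in $O\subset O_{K'}$, one has $E_0(K)=E_0(K')\cap E(K)$, so $E(K)/E_0(K)$ embeds into the $\mathrm{Gal}(K'/K)$-fixed subgroup of $\mathbb{Z}/v(\Delta)\mathbb{Z}$, which is its $2$-torsion and hence of order at most $2$.

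The remaining additive case is the main obstacle, and I would dispose of it by invoking Tate's algorithm (equivalently the Kodaira--N\'eron classification of special fibres of the minimal regular model), as presented in \cite{silverman-2}. Over a complete discretely valued base the component group of the N\'eron model coincides with $E(K)/E_0(K)$, and the classification exhibits the possible component groups for the additive Kodaira types $II,III,IV,I_0^{*},I_n^{*},IV^{*},III^{*},II^{*}$ as cyclic or Klein four groups, always of order in $\{1,2,3,4\}$. This yields $[E(K):E_0(K)]\le 4$ in every case and completes the argument; the only substantive mathematical content beyond citing the classification is the Galois descent step in the non-split multiplicative subcase.
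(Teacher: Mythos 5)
Your proposal is correct and follows essentially the same route as the paper: reduce by first-order expressibility (via Ax--Kochen--Ershov) to the complete discretely valued case $K=k((t))$, and then invoke the classical result, which is Corollary 9.2 of \cite{silverman-2}. The only difference is that you unpack that corollary into its three cases (good, non-split multiplicative via Galois descent from the unramified quadratic extension, additive via Tate's algorithm), whereas the paper simply cites it.
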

\begin{proof}
This is first order expressible so without loss of generality $K=k((t))$ is a complete
discrete valuation field.
In this case the statement corresponds to \cite[Corollary IV.9.2 (d)]{silverman-2}.
\end{proof}

\subsection{Algebraic groups}\label{alggrosec}
Let $K$ be a perfect field. To set notation 
we will take algebraic groups over $K$ to be reduced finite
type group schemes $G$ over $K$. In this case the connected component of the identity
is an open sub-scheme which is a subgroup of $G$ of finite index denoted $G^0$. Often
we will identify the group $G$ with the set of points of the algebraic closure of $K$.

Examples of algebraic groups are the additive group 
$(K^{alg},+)$
, the multiplicative group 
$((K^{alg})^{\times},\cdot)$
, and the elliptic curves discussed in 
Section \ref{ellsec}.

If $K$ is a field and $d\in K^{\times}\setminus (K^{\times})^{2}$ we call
$G(d)(K)$, the subgroup of the multiplicative group $K(\sqrt{d})^{\times}$ with norm one,
a twisted multiplicative group of $K$.
$G(d)(K)$ acts on the two-dimensional $K$-vector space $K(\sqrt{d})$ so choosing the basis
$\{1,\sqrt{d}\}$ we get an embedding $G(d)\subset \mathrm{GL}_2(K)$,
$G(d)(K)=\{\begin{bmatrix}a & db\\ b & a\end{bmatrix}| a^2-db^2=1\}$.
It is now clear how to define $G(d)$ so that it is a one dimensional algebraic group over
$K$ with $K$-points as described.

We use the well known classification of one dimensional connected algebraic groups

\begin{prop}\label{one-dimensional-algebraic-group}
Suppose $K$ is a perfect field of characteristic not $2$, 
and $G$ is a connected one dimensional algebraic group
over $K$. Then $G$ is isomorphic as an algebraic group over $K$ to the additive group,
the multiplicative group, a twisted multiplicative group, or an elliptic curve.

If $K$ is an algebraically closed field of any characteristic, and $G$ is a connected
one dimensional algebraic group over $K$, then $G$ is isomorphic as an algebraic group
to the additive group, the multiplicative group, or an elliptic curve.
\end{prop}
See for example \cite[Corollary 16.16]{algebraic-groups} for the affine case and
\cite[Theorem 8.28]{algebraic-groups} 
for the general case.

In the following proposition we collect some basic statements about morphisms between
one dimensional algebraic groups.

\begin{prop}\label{alg-group-map}
Let $K$ be a field of characteristic $0$.
\begin{enumerate}
\item The only algebraic group morphism from the multiplicative group or an elliptic curve
to the additive group is the trivial one. The algebraic group morphisms 
$(K^{alg},+)\to (K^{alg},+)$ defined over $K$ are multiplication by a scalar from $K$.
\item The only algebraic group morphism from the additive group or an elliptic curve to 
the multiplicative group are trivial. The algebraic group morphisms
$f:((K^{alg})^{\times},\cdot)\to ((K^{alg})^{\times},\cdot)$ are of the form
$x\mapsto x^n$ for an $n\in \mathbb{Z}$.
\item The only algebraic group morphisms from the additive group or the multiplicative 
group to an elliptic curve are trivial. If $E$ and $E'$ are elliptic curves and 
$f:E\to E'$ is a nontrivial algebraic group morphism, then it is surjective on 
$K^{alg}$-points, and there is a non-constant group morphism $g:E'\to E$ and an integer $n$
such that $gf$ and $fg$ are multiplication by $n$ maps. The multiplication by $n$ maps (for $n>0$)
are non-trivial group morphisms.
\end{enumerate}
\end{prop}
This is well known. The third statement is \cite[Theorem III.6.1]{silverman-1}.

The following two basic observations are well known and we omit the proofs.

\begin{lema}\label{closure-group}
Let $K$ be a perfect field and $G$ an algebraic group over $K$.
Let $A\subset G(K)$ be a subgroup of $G(K)$.
Let $H\subset G$ be the reduced closed subscheme with points equal to the 
Zariski closure of $A$. Then $H$ is an algebraic subgroup of $G$.
\end{lema}
\begin{lema}\label{def-to-alg-acf}
Let $K$ be a model of ACF$_0$, and $G$ and $H$ two algebraic groups over $K$.
If $f:G\to H$ is a definable group morphism, then it is algebraic.
\end{lema}

\subsection{Almost saturated sets and maps}\label{opasec}
Given a definable function $f:X\to Y$ and a definable subset $Z\subset X$
then $Z$ is said to be almost saturated with respect to $f$ if
there is a finite set $S\subset Y$ such that if $y\in Y\setminus S$ then 
$f^{-1}(y)\subset Z$ or $f^{-1}(y)\cap Z=\emptyset$. In this case the set
$S$ is called an exceptional set of $Z$ with respect to $f$. This is the same 
as saying that $f(X)\cap f(X\setminus Z)\subset S$, so $Z$ is almost saturated
with respect to $f$ if and only if $f(X)\cap f(X\setminus Z)$ is finite.

The function $f:X\to Y$ is said to be opaque if
every definable set $Z\subset X$ is almost saturated with respect to $f$.

\begin{lema}\label{type-opa}
Let $f:X\to Y$ be a definable function. Then $f$ is opaque if and only if the map
$f_{*}:S_X(\mathcal{U})\to S_Y(\mathcal{U})$ satisfies that if $f_*(p)=f_*(q)$ is a 
non-realized type, then $p=q$.
\end{lema}
\begin{proof}
Suppose $f$ is opaque and suppose $p$ and $q$ are types such that $f_*(p)=f_*(q)$ is non-realized.
Suppose $Z\subset X$ is definable and $Z\in p$.
Then there is $S\subset Y$ finite as in the definition of opaque.
Then because $f_*(p)$ is not realized we get $f(Z)\setminus S\in f_*(p)$. So we conclude
$f^{-1}((f(Z)\setminus S))\in q$. We have by assumption that 
$f^{-1}((f(Z)\setminus S))\subset Z$, so $Z\in q$ as required.

In the other direction, 
suppose $Z\subset X$ is definable and is not almost saturated relative to $f$.
Then $E=f(Z)\cap f(X\setminus Z)$ is a definable set and is infinite.
Let $r$ be a complete non-realized global type in $E$.
Then there are complete types $p$ and $q$ in $X$ such that $p$ is in $Z$ and 
$q$ is in $X\setminus Z$, and $f_*(p)=f_*(q)=r$ as required.
\end{proof}
Now we give some simple properties of this notion.
\begin{lema}\label{opa1}
Suppose $f:X\to Y$ and $g:Y\to Z$ are definable maps.
\begin{enumerate}
\item If $f$ and $g$ are opaque then $gf$ is opaque.
\item If $gf$ is opaque and $g$ has finite fibers then $f$ is opaque.
\end{enumerate}
\end{lema}
\begin{proof}
Suppose $f$ and $g$ are opaque and take $W\subset X$ a definable set. 
Take $S\subset Y$ a finite exceptional set of $W$ with respect to $f$ and 
$T\subset Z$ a finite exceptional set of $f(W)$ with respect to $g$.
Then $f(S)\cup T$ is a finite exceptional set of $W$ with respect to $gf$.

Now suppose $gf$ is opaque and $g$ has finite fibers, and suppose $W\subset X$ is a definable
set. Take $S\subset Z$ a finite exceptional set of $W$ with respect to $gf$.
Then $g^{-1}(S)$ is a finite exceptional set of $W$ with respect to $f$.
\end{proof}
Next we give some examples of opaque maps in henselian valued fields of residue
characteristic $0$ and 
in algebraically closed valued fields of any characteristic.
\begin{lema}\label{rv-is-opa}
If $K$ is a henselian field of residue characteristic $0$ then $K^{\times}\to RV$ is opaque.
\end{lema}
\begin{proof}
Let $X\subset K^{\times}$ be definable and let $a_1,\dots, a_n\in K$ and $Y\subset RV_0^n$
as in Proposition \ref{rv}. If $x\in K^{\times}$ and $rv(x)\neq rv(a)$ then $rv(xy-a)=rv(x-a)$ for every $y\in 1+\Mval$.

So we get that $S=\{rv(a_i)\mid a_i\neq 0\}$ makes $X$ almost saturated relative to $rv$.
\end{proof}

\begin{lema}\label{opa+}
If $K$ is a henselian field of residue characteristic $0$ or 
a model of ACVF then $\Oval\to k$ is opaque.
\end{lema}

\begin{proof}
Assume $K$ is henselian of residue characteristic $0$.
In this case the lemma follows by Lemma \ref{rv-is-opa}, as
$\Oval\to k$ coincides with $rv$ except in the fiber of $0$.

Now assume that $K$ is algebraically closed.
Being almost saturated is closed under boolean combinations, so by Swiss-cheese decomposition (Proposition \ref{holly}),
it is enough to show that balls are almost saturated under the map $\Oval\to k$.
But a ball has image a point or it contains all of $\Oval$.
\end{proof}

\begin{lema}\label{fmult-opa}
Suppose $K$ is a henselian field of residue characteristic $0$ or 
a model of ACVF, and let $r\in\Gamma_{>0}$.
 Then the map
$U_r\to B_r/B_r^-$ given by $1+x\mapsto \bar{x}$ is opaque.
\end{lema}

\begin{proof}
Under the bijection $U_0^-\to \Mval$ given by $x\mapsto x-1$ this becomes the projection
$B_r\to B_r/B_r^-$, after rescaling by an element $a\in B_r\setminus B_r^-$ this becomes 
Lemma \ref{opa+}.
\end{proof}

\begin{lema}\label{mult-opa}
If $K$ is a henselian field of residue characteristic $0$ 
or an algebraically closed field then
$\Oval^{\times}\to k^{\times}$ is opaque.
\end{lema}

\begin{proof}
This is equivalent to Lemma \ref{opa+}
\end{proof}

\begin{lema}\label{opa-ell2}
If $K$ is a model of ACVF 
or a henselian valued field of characteristic $0$
and we are given a Weierstrass equation
with coefficients in $\Oval$ then the
maps $E_a\to B_a/B_a^-$ given by $(x,y)\mapsto 
-\overline{\frac{x}{y}}$ are opaque.
\end{lema}

\begin{proof}
Under the bijection of Lemma \ref{filtration-elliptic}, and after rescaling
 the result corresponds to 
Lemma \ref{opa+}
\end{proof}

\begin{lema}\label{v-opa}
Suppose $K$ is a model of ACVF. Then the map 
$v:K\to \Gamma_{\infty}$ is opaque.
\end{lema}

\begin{proof}
The condition of being almost saturated with respect to $v$ is closed under boolean combinations, so by Proposition \ref{holly} we see that we only have to prove it for $X=K$, or $X$
equal to a point or a ball.
We may assume then that $X$ is a ball. If $0\in X$ then $X=v^{-1}v(X)$ is saturated with 
respect to $v$.
Otherwise $v(X)=\{r\}$ so the condition holds too (with $S=\{r\}$).
\end{proof}

We mention that the previous result does not hold for most other valued fields, for example
$K=k((t))$
 where $k$ is a field of characteristic $0$ and $k^{\times}\neq (k^{\times})^2$, as then the set of squares $(K^{\times})^2$ is not almost saturated with respect to $v$.
 
\begin{lema}\label{opa-ell-acvf}
Let $K$ be a model of ACVF, 
and suppose given a Weierstrass equation
with coefficients in $\Oval$. Then the map
$E(K)\to \tilde{E}(k)$ is opaque.
\end{lema}

\begin{proof}
By Lemma \ref{type-opa} and by quantifier elimination 
we have to prove that if $(x,y)\in E(K')$ and $(x,y)\in E(K')$ are such that
$r(x,y),r(x',y')\notin \tilde{E}(k)$ then $K(x,y)\cong K(x',y')$ as valued fields.
This is to say, if $f(T,S), g(T,S)\in K[T,S]$ are polynomials then 
$v(f(x,y))\leq v(g(x,y))$ is equivalent to $v(f(x',y'))\leq v(g(x',y'))$.
Notice that $x,y,x',y'\in \Oval'$ because otherwise $r(x,y)$ is the point at infinity.
In this case $r(x,y)=(\pi(x),\pi(y))$, for $\pi:\Oval\to k$.

Dividing the polynomials $f,g$ by $h=S^2+a_1TS+a_3S-a_1T^3-a_2T^2-a_6$ in 
$K[T][S]$ we see that is is enough
to consider polynomials with degree in $S$ less than $2$.
In this case the result will follow from the equation
$v(f(x,y))=\min\{v(a)\mid a \text{ is a coefficient of }f\}$, which 
is in turn equivalent to $\pi(f(x,y))\neq 0$ if $\pi(f(T,S))\neq 0$ for a $f\in \Oval[T,S]$
with degree in $S$ less than $2$. To see this last point note 
that $\pi(x)\notin k$ and so it is transcendental over $k$.
So we have $\pi(h)$ is an irreducible polynomial in $k[T,S]$ (it has degree $2$ in $S$,
and the degree in $T$ shows it has no root which is a polynomial in $K[T]$), and so
$\pi(h)(\pi(x),S)$ is an irreducible polynomial in $k(x)[S]$.
If
$\pi(f)(x,y)=0$ then $\pi(h)(x,S)|\pi(f)(x,S)$, and so by degree reasons
$\pi(f)(x,S)=0$, and this implies
$\pi(f)=0$.
\end{proof}

\begin{lema}\label{opa-ell}
Let $K$ be henselian valued field of residue characteristic $0$, 
and assume given a Weierstrass equation with coefficients in $\Oval$. 
Then the map
$E(K)\to \tilde{E}(k)$ is opaque.
\end{lema}

\begin{proof}
By Lemma \ref{type-opa} we have to see that for an elementary extension
$K'$ of $K$ and $(x,y),(x',y')\in E(K')$ such that $\pi(x,y),\pi(x',y')\notin \tilde{E}(k)$ and $\pi(x,y)\equiv_K\pi(x',y')$ then $(x,y)\equiv_K (x',y')$.
We may assume that $\pi(x,y)=\pi(x',y')$.

By elimination of quantifiers relative to the RV sort, we have to see that 
$rv(K(x,y))\equiv_K rv(K(x',y'))$ and that $K(x,y)\cong K(x',y')$ as valued fields.
We have that $K(x,y)\cong K(x',y')$ as valued fields as in the previous proof.
The previous proof also shows that the image of $K(x,y)$ and of $K(x',y')$ under $v$
is $v(K)$, and the kernel of the map $rv(K(x,y)^{\times})\to \Gamma$ and of the
map $rv(K(x',y'))^{\times}\to \Gamma$ is
$k(\pi(x),\pi(y))^{\times}$. We conclude that $rv(K(x,y))=rv(K(x',y'))$
\end{proof}

The following aside is a consequence of the previous discussion.

\begin{prop}\label{st-dom-ell}
Let $K$ be a model of ACVF of residue characteristic not $2$ or $3$.
Let $E$ be an elliptic curve over $K$.
Then $E_0(K)\subset E(K)$
 is the maximal stably dominated connected subgroup of $E(K)$, obtained
in \cite[Corollary 6.19]{metastable}.
\end{prop}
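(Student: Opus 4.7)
The plan is to prove the two directions separately: that $E_0(K)$ is a stably dominated connected subgroup of $E(K)$, and that any strictly larger type-definable connected subgroup fails to be stably dominated. I will use from \cite{metastable} that the generic of a stably dominated type-definable group is stably dominated, that images of stably dominated sets under definable maps are stably dominated, and that in ACVF the value group $\Gamma$ is orthogonal to the stable part, so no non-realized type on $\Gamma$ is stably dominated.

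For stable domination of $E_0(K)$, I fix a minimal Weierstrass equation with coefficients in $O$ and consider the reduction map $\pi : E_0(K) \to \tilde{E}_0(k)$. This is a surjective group homomorphism by henselianness of $K$, with kernel $E_0^-(K)$ in definable bijection with $M$ (Lemma \ref{filtration-elliptic}), and is aas by Lemma \ref{aas-ell-acvf}. The target $\tilde{E}_0(k)$ is a connected algebraic group over the algebraically closed residue field (an elliptic curve, $\mathbb{G}_m$, or $\mathbb{G}_a$ according to reduction type), hence stably dominated and internal to the residue field; and $M$ is a standard stably dominated connected subset of $K$. The extension of stably dominated connected groups along an aas surjection preserves both properties, so $E_0(K)$ is stably dominated and connected.

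For maximality, let $H \supsetneq E_0(K)$ be a type-definable connected subgroup of $E(K)$. If $E$ has good or additive reduction, then $E(K)/E_0(K)$ is finite by Kodaira's classification (this is where residue characteristic not $2$ or $3$ enters), so the connected image $H/E_0(K)$ is trivial, contradicting the strict containment. If $E$ has multiplicative reduction, Proposition \ref{tate-uniformization} (item 6) identifies $E(K)/E_0(K)$ in the valued field language with $O(v(q))/\mathbb{Z}v(q)$, a subgroup of the quotient $\Gamma/\mathbb{Z}v(q)$. The image of $H$ in this quotient is then a non-trivial type-definable connected subgroup of (a quotient of) $\Gamma$. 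Since images of stably dominated groups are stably dominated, and no non-trivial connected type-definable subgroup of a quotient of $\Gamma$ can be stably dominated in ACVF by orthogonality of $\Gamma$ to the stable part, we conclude that $H$ is not stably dominated.

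The main technical step to verify is that an aas surjection of type-definable groups with stably dominated connected kernel and target has stably dominated connected source. This is a general principle in the metastability theory of ACVF, and Lemma \ref{type-aas} makes this precise at the level of types by ensuring that two types with equal non-realized image under the aas map are equal, allowing one to track stable domination across the extension $E_0^-(K) \to E_0(K) \to \tilde{E}_0(k)$.
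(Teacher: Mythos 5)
Your maximality argument is essentially the paper's (the quotient $E(K)/E_0(K)$ is $\Gamma$-internal via Tate uniformization, hence receives no nontrivial image of a stably dominated group), and that half is fine modulo the remark that in a model of ACVF with residue characteristic not $2$ or $3$ only good and multiplicative reduction occur, and in the good case $E_0(K)=E(K)$. The problem is in the first half. You assert that the kernel $E_0^-(K)$, in definable bijection with $M$, is ``a standard stably dominated connected subset of $K$,'' and you then feed it into an extension principle ``stably dominated kernel $+$ stably dominated target $\Rightarrow$ stably dominated source.'' But $(M,+)$ is \emph{not} stably dominated: its translation-invariant generic type sends $v(x)$ to the non-realized cut just above $0$ in $\Gamma$ (a generic element of the group $(M,+)$ must avoid every proper sub-ball $B_r$, $r>0$, defined over the base), so the generic is not orthogonal to $\Gamma$. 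The same holds for $1+M$, for $B_r^-$, and for $E_0^-(K)$ via Lemma \ref{filtration-elliptic}; it is $(O,+)$ and the closed balls that are stably dominated, not the open ones. So the hypothesis of your extension principle fails and that route collapses. (Indeed $E_0^-(K)$ is a connected definable subgroup of $E_0(K)$ that is not stably dominated, which is exactly why one cannot build $E_0$ up from its filtration here.)

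The fix --- and the paper's actual argument --- is that the aas property makes the kernel irrelevant: by Lemma \ref{aas-ell-acvf} (note: for ACVF it is this lemma, not the residue characteristic $0$ version), any two points of $E_0(K')$ whose reductions are generic in $\tilde{E}$ over $k$ realize the same type over $K$. Hence ``$x\in E_0$ and $\pi(x)$ generic in $\tilde{E}_0$'' is a complete type $p$; it is definable (definability of types in ACF plus the description of definable families of subsets of $k^n$), it is $E_0$-invariant since $\pi_*(gp)=\pi(g)\pi_*(p)=\pi_*(p)$, and it is stably dominated \emph{via} $\pi$ by its pushforward to the residue field. A group with a definable translation-invariant stably dominated generic is stably dominated and connected. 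In other words, your Lemma \ref{type-aas} observation already does all the work at the level of the generic type; you do not need, and cannot have, stable domination of the kernel. I would also note that the general principle you cite (closure of stably dominated groups under extensions) is a real theorem in the metastability literature, but invoking it here requires hypotheses your situation does not satisfy, so even granting it does not repair the argument.
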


\begin{proof}
From  Proposition \ref{tate-uniformization} we get that $E(K)/E_0(K)$ is $\Gamma$-internal
and in fact
$E(K)/E_0(K)\cong O(v(\Delta))/\mathbb{Z}v(\Delta)$ for $\Delta$ the discriminant of 
minimal Weierstrass equation of $E$.

From Lemma \ref{opa-ell} we get that the type $p(x)$ that says $x\in E_0$ and $\pi(x)\notin \tilde{E}(k)$ is a complete type. By quantifier elimination in the 3-sorted language $\mathcal{L}_{\Gamma k}$ for valued fields (with sorts for the valued field, the value group and the residue field) (see \cite[Theorem 2.1.1(3)]{HHM}) 
we conclude that every definable family $\{S_a\}_a$ with $S_a\subset k^n$ is 
definably piecewise in $a$ 
of the form $S_a=T_{f(a)}$ for a definable function $f$ into $k^m$
and $T_{b}$ a definable family in the pure field language. From this and the fact
that all types in the stable theory ACF are definable we obtain that the type $p$ 
is definable. The type $p$ is $E_0$-invariant, as $\pi_*(gp)=\pi(g)\pi_*(p)=\pi_*(p)$.
The second equation holds because $\tilde{E}_k$ is an irreducible variety over $k$.

Finally the type $p$ is stably dominated via $\pi$ over some parameter set $A$ defining $E$. 
We recall this means that if $b\vDash p|A$ and $\pi(b)\vDash \pi_*(p)|B$ then $f(a)\vDash p|B$
for $A\subset B$. The first condition implies $b\in E_0(K)$ and the second that 
$\pi(b)$ is generic in $\tilde{E}$ over $k_B$ (where $k_B$ is the residue field of $acl(B)$)
\end{proof}

In residue characteristic $2$ and $3$ the equations for a minimal Weierstrass equation
become more complicated and it is not clear to us how to prove for example that 
it stabilizes on
finite extensions. 
This is likely to be the case however, because the construction 
in \cite{metastable} does not have this restriction.

\begin{lema}\label{opa-group}
If $f:G\to H$ is a definable group morphism which is opaque, and $X\subset G$ is
a type-definable subgroup, then either $f(X)$ is finite or $X=f^{-1}f(X)$
\end{lema}

\begin{proof}
Let $K$ be the kernel of $f$.
Suppose $f(X)$ is infinite. 
 There is $C$ a small
set such that if $h\in H\setminus C$ then 
$f^{-1}(h)\cap X=\emptyset$ or $f^{-1}(h)\subset X$.
Indeed if $X$ is a small intersection of definable sets $D_i$ and $S_i$ is a finite
exceptional set of $D_i$ with respect to $f$, then $C=\bigcup_i S_i$ is a small 
exceptional set of $X$ with respect to $f$.

Take $y\in f(X)\setminus C$. Then $K\subset f^{-1}(y)f^{-1}(y)^{-1}\subset X$ as required.
\end{proof}
We will need the following modification in later in Lemma \ref{psfv0-rv}. 

\begin{lema}\label{opa-group2}
Let $G$ be a definable group, and let $\{G_i\}_{i\in I}$ a small downwards
directed net of definable normal subgroups of $G$.
Suppose that for every $Y\subset G$ definable, there exists $i\in I$
such that $Y$ is almost saturated relative to $G\to G/G_i$.
Suppose $X\subset G$ is a type-definable subgroup.
Then either the image of $X$ in $G/\bigcap_i G_i$ is finite
or $X$ contains $\bigcap_i G_i$
\end{lema}

In fact, both this lemma and Lemma \ref{opa-group} can be proved
together by suitably defining opaqueness not only for definable sets but
for pro-definable sets and considering the map
$G\to G/\bigcap_i G_i=\lim_i G/G_i$. In order to keep the definition
simple we prefer to repeat the proof with the necessary modifications.
\begin{proof}
If $Y\subset G$ is definable then it being almost saturated with respect
to $G\to G/G_i$ implies a fortiori that it is almost saturated with 
respect to $f:G\to G/\bigcap_i G_i$. We conclude as in the proof
of Lemma \ref{opa-group} that $X$ has a small exceptional set
$C\subset G/\bigcap_i G_i$ relative to $f$.
Now we can use compactness and saturation to see that if $f(X)$ is infinite
then $f(X)\setminus C$ is non-empty. So if $r\in f(X)\setminus C$
then $\bigcap_i G_i\subset f^{-1}(r)f^{-1}(r)^{-1}\subset X$.
\end{proof}

The map $RV\to \Gamma$ is usually not opaque, but we have the following weakening.
\begin{lema}\label{opases}
Let $0\to A\to B\to C\to 0$ be an exact sequence of abelian groups as in
 Situation \ref{rvsit}.
If $X\subset B$ is definable then there is an $n\in\mathbb{Z}_{>0}$ such that $X$ is almost saturated
with respect to $B\to B/nA$.
\end{lema}

\begin{proof}
This condition is closed under boolean combinations in $X$ so we just need to check the cases
described in Proposition \ref{qeses}. These are clear.
\end{proof}

\subsection{Orthogonality}\label{ortsec}
Suppose that $M$ is a structure in some language $L$.
If $A$ and $B$ are $0$-definable sets in $M$, then $A$ and $B$ are called orthogonal if
every definable subset $C\subset A^n\times B^m$ is a boolean combination of products
of definable subsets of $A^n$ and $B^m$. 

An example of orthogonal sets is given by two sorts $A$ and $B$
 in a sorted language $L$, where
$M$ eliminates quantifiers, and such that there are no functions from $A^n$ to a sort
different from $A$, or from $B^m$ to a sort different from $B$, or from a product of 
at least one copy
of $A$ and of $B$ (in any order)
into any sort. For example the sorts of the residue field and the value group 
in a non-trivially valued algebraically
closed valued field, or in a henselian valued field of residue characteristic 0.

\begin{lema}\label{val-and-res}
If $A$ and $B$ are orthogonal, and $X\subset A^n$ is a definable set and
$f:X\to B^m$ is a definable function, then the image of $f$ is finite.
\end{lema}
This is clear from considering the graph of $f$.

We will also need a similar result for maps between sets definable in $\Gamma$ or $k$
and the main sort in ACVF. These are well known, but we
include a proof for convenience.

The $n=m=1$ case of the 
following lemma can be found as part of \cite[Theorem 2.4.13]{HHM}.
\begin{lema}\label{val-to-home}
Suppose $K$ is a model of ACVF. Suppose $X\subset \Gamma^n$ is definable and $f:X\to K^m$ is definable. Then the image
of $f$ is finite.
\end{lema}

\begin{proof}
We may assume $m=1$. It follows for example from $C$-minimality of ACVF that every infinite definable subset $D$ of $K$ has non-empty interior, so it definably embeds the closed unit ball $\Oval$. If $f:X\to K^m$ has infinite image, we may thus assume $f(X)=\Oval$. But then $\pi\circ f:X\rightarrow k$ is surjective, contradicting Lemma \ref{val-and-res} and the fact that $\Gamma$ and $k$ are orthogonal in ACVF.
\end{proof}

\begin{lema}\label{res-to-home}
Suppose $K$ is a model of ACVF. Suppose $X\subset k^n$ is definable and $f:X\to K^m$ is definable. Then the image of 
$f$ is finite.
\end{lema}

\begin{proof}
The proof is the word by word the same as the previous one, Interchanging the roles of $k$ and $\Gamma$.
\end{proof}

\section{Subgroups of one-dimensional algebraic groups}\label{subgrosec}
Here we give the type-definable subgroups of an algebraic group of dimension $1$
in a model of ACVF or a model of PL$_0$.
\subsection{Subgroups of the additive group}\label{addgrosec}
Here we give the type-definable subgroups of the additive group for 
a model of ACVF and a model of PL$_0$. We give the argument in a more
general form for reference, as it reappears in the analysis of the other 
one dimensional groups.
\begin{defi}
Suppose $G$ is a commutative group and $I_{\infty}$ is a totally ordered
set with a maximal element denoted  $\infty$. Denote $I=I_{\infty}\setminus\{\infty\}$.
A group filtration\footnote{In the literature a group filtration $w$ is sometimes called a \emph{pre-valuation} on $G$, and a \emph{valuation} if $w$ is separated.} of $G$ by $I$ is a map $w:G\to I_{\infty}$ such that
$w(a+b)\geq\text{min}\{w(a),w(b)\}$, $w(-a)=w(a)$ and $w(0)=\infty$.
The filtration is separated if $w(a)=\infty$ implies that $a=0$.

Given a group filtration and $r\in I$ we denote $G_r=\{a\in G\mid w(a)\geq r\}$
and $G_r^-=\{a\in G\mid w(a)>r\}$.

Note that $G_r$ and $G_r^-$ are subgroups of $G$.
\end{defi}

\begin{lema}\label{pll}
Suppose $G$ is a commutative group definable in some theory.
Suppose $w:G\to I_{\infty}$ is a definable separated group filtration.

Assume that
\begin{enumerate}
\item For every $r\in I$, $G_r/G_r^-$ is infinite.
\item For every $r\in I$, 
any type-definable subgroup of $G_r/G_r^-$ equals $0$ or $G_r/G_r^-$.
\item  For every $r\in I$, the map $G_r\to G_r/G_r^-$ is opaque.
\item Every type-definable upwards closed subset of $I$ is a small intersection
of intervals.
\end{enumerate}
Then every type-definable subgroup of $G$ is an intersection of definable
groups, and every definable subgroup is of the form $0$, $G$, $G_r$ or $G_r^-$.
\end{lema}
\begin{proof}
Let $X\subset G$ be a type-definable subgroup.
Suppose $r\in w(X)$, and take $\pi:G_r\to G_r/G_r^-$ the projection. 
Then $\pi(X\cap G_r)$ is a non-zero type-definable subgroup of $G_r/G_r^-$, 
and so
it is equal to $G_r/G_r^-$. 
By Lemma \ref{opa-group} we get $G_r\subset X$.
We conclude $w(X)\subset I_{\infty}$ is a type-definable upwards closed set,
and so it is a small intersection of intervals. 
And so $X$ corresponds to the corresponding intersection of $G_r$ and $G_r^-$.
\end{proof}

\begin{lema}\label{psf-sgr}
Let $k$ be a pseudo-finite field of characteristic $0$,
and $G$ be a connected one dimensional algebraic group over $k$.
If $X\subset G(k)$ is an infinite type-definable subgroup,
then $X=\bigcap_n XG(k)^n$, and $G(k)^n$ is of finite index in $G(k)$ for every $n\geq1$, so $XG(k)^n$
is a definable subgroup of $G$ of finite index.
\end{lema}

\begin{proof}
The theory of $k$ is supersimple, so $X$ is an intersection of definable groups (see \cite[Theorem 5.5.4]{simple}).
We may thus assume that $X$ is an infinite definable subgroup.

The theory of $k$ is geometric and admits elimination of imaginaries after adding
some constants (see \cite{psf-algebraically-bounded} and 
\cite[Proposition 1.13]{psf-imaginaries}), so
in this case we have that $\mathrm{dim}(X)=1$, and so $\mathrm{dim}(G(k)/X)=0$.
We conclude that $G(k)/X$ is finite. But then $G(k)^n\subset X$ for some $n\geq1$ as required.

The fact that $G(k)^n$ is of finite index in $G(k)$ is shown as follows.
The $n$th-power map $G(k)\to G(k)$ is a group map with finite kernel,
because the $k$-rational $n$-torsion points form a subset of the 
$k^{alg}$-rational $n$-torsion points, and these form finite subgroup, by examining each of the possible groups $G$ in the classification
of one dimensional connected algebraic groups.
The $k^{alg}$-rational $n$-torsion points form a subgroup isomorphic to $0$,
$\mathbb{Z}/n\mathbb{Z}$ or $\mathbb{Z}/n\mathbb{Z}\times \mathbb{Z}/n\mathbb{Z}$
in the additive, multiplicative and elliptic case respectively.
From here using that the theory is geometric we conclude that
$\mathrm{dim}(G(k)^n)=1$ and $\mathrm{dim}(G(k)/G(k)^n)=0$, and so 
$G(k)/G(k)^n$ is finite, as required.
\end{proof}

\begin{lema}\label{psf-add}
Let $k$ be a pseudo-finite field of characteristic $0$.
If $X\subset (k,+)$ is a type-definable subgroup,
then $X=k$, or $X=0$.
\end{lema}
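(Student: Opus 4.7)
The plan is to combine the dimension theory of pseudo-finite fields with the stabilizer theorem for type-definable groups in simple theories. Since $k$ is pseudo-finite of characteristic $0$, it is supersimple of SU-rank $1$; in particular $\mathrm{SU}(k^+) = 1$. Because $k^+$ is torsion-free, I may assume $X \neq \{0\}$, so $X$ is infinite, and aim at $X = k$.

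First, every definable additive subgroup $H \subset k^+$ is $\{0\}$ or $k$: by the Chatzidakis--van den Dries--Macintyre dimension theory, $\dim H \in \{0, 1\}$; if $\dim H = 0$ then $H$ is finite, so $H = \{0\}$ by torsion-freeness in characteristic $0$; if $\dim H = 1$ then $k^+/H$ has dimension $0$ and is finite, whence divisibility of the $\mathbb{Q}$-vector space $k^+$ forces $H = k$.

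For the type-definable case, fix a small algebraically closed parameter set $A$ over which $X$ is type-definable, and, using saturation plus infiniteness of $X$, pick $a \in X$ with $\mathrm{SU}(a/A) = 1$; set $p = \mathrm{tp}(a/A)$. By Hrushovski's stabilizer theorem for simple groups, $\mathrm{Stab}(p)$ is a type-definable subgroup of $k^+$ generated by differences $a_1 - a_2$ of independent realizations of $p$; since these all lie in $X$, one has $\mathrm{Stab}(p) \subset X - X = X$. Moreover, $\mathrm{Stab}(p)$ has bounded index in $k^+$ because $p$ has maximal SU-rank.

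To conclude, I would verify $(k^+)^{00} = k^+$: in a supersimple theory of finite SU-rank, for an abelian definable group $G$, the bounded-index component $G^{00}$ coincides with the definable connected component $G^0$; for $G = k^+$, divisibility implies that no proper definable subgroup of finite index exists, so $G^0 = k^+$ and hence $G^{00} = k^+$. Therefore $k^+ = (k^+)^{00} \subset \mathrm{Stab}(p) \subset X$, giving $X = k$. The main obstacle is the simple-theoretic input $(k^+)^{00} = k^+$ for a divisible abelian group in a supersimple theory of finite SU-rank; this is standard but nontrivial, and the argument hinges on invoking it correctly.
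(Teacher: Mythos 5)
Your proof is correct, but it reaches the conclusion by a longer route whose key detour is ultimately redundant. The paper's argument is: pseudo-finite fields are supersimple, so by Theorem 5.5.4 of \cite{simple} every type-definable subgroup is a small intersection of definable subgroups; this reduces immediately to the case of an infinite \emph{definable} subgroup $X$, which the Chatzidakis--van den Dries--Macintyre dimension theory (exactly your first paragraph) forces to equal $k$, since $\dim(k/X)=0$ gives $k/X$ finite and then $k=nk\subset X$ by divisibility. You handle the definable case identically, but for the passage from type-definable to definable you invoke the stabilizer theorem to get a bounded-index type-definable subgroup inside $X$, and then appeal to $(k^+)^{00}=(k^+)^{0}=k^+$. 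That last identification --- that a type-definable subgroup of bounded index contains the intersection of the definable finite-index subgroups --- is itself proved by writing the type-definable group as an intersection of definable groups, i.e.\ by the very theorem of Wagner that the paper uses; divisibility alone does not control type-definable bounded-index subgroups. So your argument is sound, but once you grant the input needed for $G^{00}=G^{0}$ you could discard the stabilizer theorem entirely and argue as the paper does; the stabilizer route buys nothing here and costs the extra care of choosing a suitable base (a boundedly closed set) and verifying that $\mathrm{SU}(p)=1$ makes $p$ generic so that $\mathrm{Stab}(p)$ really has bounded index.
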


\begin{proof}
This follows from Lemma \ref{psf-sgr} because $nk=k$ and $k$ is torsion
free.
\end{proof}

\begin{prop}\label{psfv0-add}
Suppose $K$ is a model of PL$_0$.
Then every non-trivial 
type-definable subgroup $X\subset K$ of $(K,+)$ 
is an intersection of balls around the origin.
\end{prop}

\begin{proof}
Using the filtration $v:K\to \Gamma_{\infty}$, this is a consequence of
Lemma \ref{pll}, which applies by Lemma \ref{psf-add} and \ref{opa+}.
\end{proof}

\begin{lema}\label{acvfl}
Suppose $G$ is a commutative 
group definable in some theory and $v:G\to I_{\infty}$ is a definable group
filtration.
Suppose every definable subset of $G$ is a boolean combination of translates of $G_r$ and $G_r^{-}$
or the trivial subgroups.
Suppose $G_r/G_r^-$ is infinite for every $r\in I$.
Then every type-definable subgroup of $G$ is a small intersection of definable subgroups
and a definable subgroup is trivial or finite or a finite extension of a $G_r$ or $G_r^{-}$
\end{lema}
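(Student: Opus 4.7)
My plan is to mimic Proposition~\ref{acvf0-add}, replacing the specific ACVF tools (the \emph{aas} maps and the structure of an algebraically closed residue field) by the boolean-combination hypothesis together with the infinite-quotient assumption. I proceed in two stages: (a) classify the definable subgroups, and then (b) show that every type-definable subgroup is a small intersection of definable subgroups.

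For (a), let $H\subseteq G$ be an infinite definable subgroup, and by the structural hypothesis write it as a finite boolean combination of translates of $B_s$'s and $B_s^-$'s with radii drawn from a finite set $S\subset I$; let $\rho=\max S$. Each basic building block is either disjoint from or a union of $B_\rho^-$-cosets, hence so is $H$, and since $0\in H$ we obtain $B_\rho^-\subseteq H$. Let $r_0$ be the smallest $r\in S$ for which $B_r\subseteq H$ or $B_r^-\subseteq H$. An atom of the generating boolean algebra contained in $H$ has Swiss-cheese form $B'\setminus\bigcup_iB_i$, where $B'$ is a coset of some $B_{r'}$ with $r'\in S$; using $|B_r/B_r^-|=\infty$ one shows that its difference set equals all of $B_{r'}$. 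Subgroup closure of $H$ then forces each atom of $H$ to have $r'\geq r_0$, since otherwise $B_{r'}\subseteq H$ would contradict the minimality of $r_0$. Thus each atom lies in a single coset of $B_{r_0}$ (or $B_{r_0}^-$), and with only finitely many atoms, $H$ is a finite extension of $B_{r_0}$ or $B_{r_0}^-$.

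For (b), let $X$ be a type-definable subgroup. Following the scheme of Proposition~\ref{acvf0-add}, I show that for every $r\in v(X\setminus\{0\})$, either $B_r\subseteq X$ or $B_r^-\subseteq X$. The type-definable subgroup $X\cap B_r$ has non-trivial image in $B_r/B_r^-$, and by (a) any definable set sandwiched between $X\cap B_r$ and $B_r$ contains a finite-index extension of some $B_s^-$ with $s\geq r$; taking the intersection over the defining family of $X$ and using the infinite-quotient hypothesis forces $B_r^-\subseteq X$. Hence $v(X)$ is an upward-closed type-definable subset of $I$, and $X$ is the corresponding intersection of balls---together with at most a finite-index factor---all of which are definable subgroups.

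The main obstacle is the difference-set lemma at the core of (a): for any Swiss-cheese atom $A=B'\setminus\bigcup_iB_i$ with $B'$ a coset of $B_{r'}$, one needs $A-A=B_{r'}$. This reduces to showing that $B_{r'}$ is not a finite union of strictly smaller sub-balls, which follows from $|B_{r'}/B_{r''}|\geq|B_{r'}/B_{r'}^-|=\infty$ for any $r''>r'$ in $I$. Once this lemma is established, (a) follows by the atom analysis above, and (b) is a direct compactness-plus-case-analysis adaptation of the ACVF argument.
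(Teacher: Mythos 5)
Your part (a) is essentially sound and rests on the same key tool as the paper, namely the difference-set lemma $A-A=B_{r'}$ for a nonempty Swiss cheese $A$ (though to prove it you need slightly more than ``$B_{r'}$ is not a finite union of smaller sub-balls'': given $y\in B_{r'}$ you must find a coset $c+B_{r'}^-$ such that both $c+B_{r'}^-$ and $c+y+B_{r'}^-$ avoid the finitely many removed sub-balls, which the infinitude of $B_{r'}/B_{r'}^-$ does provide; you also need the variant where the outer set is an open ball, and the co-finite atom). The genuine gap is in part (b). First, the intermediate claim ``for every $r\in v(X\setminus\{0\})$, either $B_r\subseteq X$ or $B_r^-\subseteq X$'' is false: take $X$ a finite nontrivial subgroup (e.g.\ $\mathbb{F}_p\subset(K,+)$ in equal characteristic $p$), or $X=F+B_s$ with $F$ a finite group of valuation well below $s$; then $r=\min v(X\setminus\{0\})$ violates the claim. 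This is exactly why the conclusion of the lemma must allow \emph{finite extensions} of balls, which your (a) acknowledges but your (b) forgets. Second, the justification of that claim misuses (a): you apply the classification of definable \emph{subgroups} to ``any definable set sandwiched between $X\cap B_r$ and $B_r$'', but such a set need not be a group (the finite set $\mathbb{F}_p$ itself in the example above is such a sandwiched definable set containing no finite-index extension of any $B_s^-$), and a definable superset of a type-definable group does not automatically contain a definable group --- producing one is precisely the content of the statement being proved, so as written the step is circular.

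The missing idea, and the paper's actual route, is a compactness argument applied to definable \emph{sets} rather than groups: given definable $D\supseteq X$, use that $X$ is a group to find a symmetric definable $S$ with $X\subseteq S$ and $S+S+S\subseteq D$; decompose $S$ as a finite union of translates $a_k+E_k$ of Swiss cheeses; by the difference-set lemma each $E_k-E_k$ is a ball $B_k$ with $E_k\subseteq B_k$, so $B=\bigcup_k B_k$ is a single ball (the $B_k$ are nested subgroups) with $B\subseteq S-S$ and $S\subseteq F+B$ for $F$ finite. Then $X+B$ is a subgroup of $G$ contained in $F+B$, so $(X+B)/B$ is finite, hence $X+B$ is a finite union of $B$-cosets, hence definable, a finite extension of the ball $B$, and $X+B\subseteq S+S-S\subseteq D$. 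Intersecting over all definable $D\supseteq X$ gives the result. Your difference-set lemma already does the real work here; it just has to be applied to the pieces of such an $S$ rather than to a putative definable group between $X$ and $D$.
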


\begin{proof}
Let us call $G_r$ or $G_r^-$ closed and open balls respectively.

In this situation we claim that if $S$ is a nonempty 
set of the form $C\setminus \bigcup_{i=1}^m a_i+C_i$
where each $C, C_i$ are trivial groups or balls, then
$C=S-S$.
Assume first $C=G_r^-$ is an open ball, but not a closed ball. 
In this case $S$ contains a nonempty set of the form
$G_r^{-}\setminus G_s$ for $s>r$.
So without loss of generality $S=G_r^-\setminus G_s$. For $x\in G_r^-$, as $G_r^-\supsetneq G_{\mathrm{min}(v(x),s)}$ there is $y\in S$ such that $v(y)<v(x)$. Then $x+y,y\in S$ and thus $x\in S-S$.

Now suppose $S=G_r\setminus X$. 
Then without loss of generality $X$ is a union of a finite
number of translates of $G_r^{-}$, say $X=F+G_r^-$ with $F$ finite.
Denote $\pi:G_r\to G_r/G_r^{-}$ and let $x\in G_r$ be arbitrary. Choose $y\in G_r$ such that $\pi(y)$ is non-algebraic over $\pi(F)\cup\{\pi(x)\}$. Then $\pi(x+y),\pi(y)\not\in\pi(F)$, in other words $x+y,y\in S$ and thus $x\in S-S$.

The case of $S$ equals $G\setminus X$ where
$X$ is a union of translates of groups of the form $G_s$, $G_s^-$ or $0$, has a very similar proof, which depending on whether $I$ has a minimal element or not is as the proof for $G_r$, or the proof for $G_r^-$, respectively.

This finishes the proof
of the claim.

Let $X\subset G$ be a type-definable subgroup and let $X\subset D$ be a definable set.
We have to find a definable $B$ such that $X\subset B\subset D$ and $B$ is a subgroup
of the form claimed.
Let $S$ be a symmetric definable set with $X\subset S$ and $3S\subset D$.
Then $S$ is a finite union of the form 
$S=\bigcup_k (a_i+E_i)$ where each $E_i$ is of the form discussed in the first 
paragraph. So there is $B_i$ which is either a ball or trivial such that
$B_i=E_i-E_i$ and $E_i\subset B_i$.
If $B=\bigcup_iB_i$ then $B\subset S-S$ and $S\subset F+B$ for $F$ finite.

Now if $H$ is the group generated by $F+B$ then $H/B$ is a finitely generated group, so
$H=B_t\oplus \mathbb{Z}^s$ where $B_t/B$ is the torsion part of $H/B$ and so 
is a finite group.
As $X+B$ is a type-definable subgroup of $H$ one concludes by compactness that 
$X+B\subset B_t$. And so being the inverse image of a subgroup of the finite group
$B_t/B$ the subgroup $X+B$ is definable. So $X\subset X+B\subset D$ is the group we wanted.
\end{proof}

\begin{prop}\label{acvf-add}
If $K$ is a model of ACVF and $X\subset K$ is a type-definable subgroup of the additive group,
then it is an intersection of definable groups, and any definable group is a finite extension
of a ball around $0$.
\end{prop}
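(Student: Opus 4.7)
The strategy is to reduce to Lemma \ref{acvfl} applied to $G=(K,+)$ with $v:K\to\Gamma_\infty$ the valuation and $I=\Gamma_\infty$. The proposition then follows immediately from the conclusion of that lemma, since a finite subgroup is a finite extension of the trivial ball $\{0\}=B_\infty$, and the remaining cases already have the stated form.

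Thus the work reduces to checking three hypotheses of Lemma \ref{acvfl} in our setting. First, the ultrametric-style axioms on $v$: $v(a+b)\geq\min\{v(a),v(b)\}$, $v(-a)=v(a)$, and $v(0)=\infty$ is maximal; these are built into the definition of a valuation. Second, every definable subset of $K$ must be a boolean combination of translates of closed balls $B_r$, open balls $B_r^-$, and trivial subgroups (points); this is exactly Holly's description (Proposition \ref{holly}), since a Swiss cheese is by definition a boolean combination of balls and points, and the relevant balls are translates of balls around $0$. Third, $B_r/B_r^-$ must be infinite for every $r\in\Gamma$: choosing any $a\in K$ with $v(a)=r$, multiplication by $a^{-1}$ gives an isomorphism of abelian groups $B_r/B_r^-\cong O/M=k$, and the residue field of a nontrivially valued algebraically closed field is itself algebraically closed, in particular infinite.

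With these three ingredients in hand, Lemma \ref{acvfl} yields at once that every type-definable subgroup of $(K,+)$ is a small intersection of definable subgroups, and that every definable subgroup is either trivial, finite, or a finite extension of some $B_r$ or $B_r^-$. In each case this is a finite extension of a ball around the origin (interpreting $\{0\}$ as the degenerate ball), which is exactly the statement of Proposition \ref{acvf-add}.

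There is no substantive obstacle here: the real content was packaged into Lemma \ref{acvfl}, and the present proposition is simply its instantiation in ACVF. The only point that requires any care is the verification via Holly that the algebra of definable subsets of $K$ has the form demanded by the lemma; but Proposition \ref{holly} gives this directly, so the argument is essentially bookkeeping.
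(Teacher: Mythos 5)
Your proposal is correct and follows exactly the paper's route: the paper's entire proof is the one-line observation that this is a particular case of Lemma \ref{acvfl}, and your verification of its hypotheses (the ultrametric axioms, Holly's Swiss-cheese decomposition for the definable subsets, and $B_r/B_r^-\cong k$ infinite) is precisely the bookkeeping the paper leaves implicit.
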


\begin{proof}
This is a particular case of Lemma \ref{acvfl}.
\end{proof}

\subsection{Subgroups of the multiplicative group}\label{mulgrosec}
\begin{lema}\label{sesfin}
Let $0\to A\to B\to C\to 0$ be an exact sequence of abelian groups as in \ref{rvsit}.
Take $n\in \mathbb{Z}_{>0}$.
Then the sequence $0\to A/nA\to B/nA\to C\to 0$ is split exact and any splitting is 
definable.
\end{lema}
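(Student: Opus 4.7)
The plan is to reduce both claims to a single algebraic fact: $nB\cap A=nA$. If $a\in A$ and $a=nb$ for some $b\in B$, then the image $n\pi(b)$ of $a$ in $C$ vanishes, whence $\pi(b)=0$ by torsion-freeness of $C$, so $b\in A$ and $a\in nA$.

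First, the exactness of $0\to A/nA\to B/nA\to C\to 0$ is routine: injectivity of $A/nA\hookrightarrow B/nA$ is clear (the subgroup $nA$ in $A$ is the same as $nA$ viewed in $B$), and the remaining exactness transfers from $0\to A\to B\to C\to 0$. For split exactness, I would observe that the key fact above, applied with $m$ in place of $n$, yields $(mB+nA)\cap A=mA+nA$ for every positive integer $m$; this is precisely the statement that $A/nA$ is a \emph{pure} subgroup of $B/nA$. Since $A/nA$ is bounded (annihilated by $n$), the standard Prüfer theorem that bounded pure subgroups of abelian groups are direct summands yields a decomposition $B/nA=(A/nA)\oplus H$, and any such complement $H$ maps isomorphically onto $C$, producing a splitting.

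For definability of an arbitrary splitting $s\colon C\to B/nA$, note that $C/nC$ is finite as a quotient of the finite group $B/nB$. Fix representatives $c_1,\dots,c_k\in C$ of the cosets of $nC$ in $C$, and fix lifts $b_i\in B$ of the elements $s(c_i)\in B/nA$. Then for any $c\in C$, the coset $s(c)$ is characterized as the unique coset of $nA$ containing some $x\in B$ satisfying $\pi(x)=c$ and $x-b_i\in nB$, where $i$ is the unique index with $c-c_i\in nC$. Existence of such $x$ comes from writing $c=c_i+nc'$ and using that $ns(c')\in nB/nA$; uniqueness of the coset follows from the key lemma, since two candidates $x_1,x_2$ satisfy $x_1-x_2\in A\cap nB=nA$. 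The formula uses only the primitives in the language of Situation \ref{rvsit} (group operations, the projection $B\to C$, the predicate for $nB$, and the parameters $c_i, b_i$), so $s$ is definable. Every splitting arises in this manner, for an appropriate choice of $b_i$ satisfying the compatibility required to make the assignment a homomorphism.

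The only subtlety, rather than a genuine obstacle, is that $B/nA$ is not itself a sort of the language, so definability of $s$ must be phrased as definability of its graph (say as a subset of $C\times B/nA$ represented by a definable coset in $C\times B$); the formula described above does exactly this.
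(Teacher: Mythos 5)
Your proof is correct. Note first that the paper itself does not prove Lemma \ref{sesfin}; it only cites Proposition 8.3 of \cite{one-dimensional-p-adic}, so there is no in-paper argument to compare against. Your self-contained argument is sound at every step: the key identity $nB\cap A=nA$ (and its analogue for arbitrary $m$) does follow from torsion-freeness of $C$; it does translate into purity of $A/nA$ in $B/nA$; and since $A/nA$ is bounded (indeed finite, as it embeds into $B/nB$), the Pr\"ufer--Kulikov theorem on bounded pure subgroups gives the direct-sum decomposition and hence the splitting. The definability argument is also complete: finiteness of $C/nC$ lets you reduce an arbitrary section $s\colon C\to B/nA$ to finitely many parameters $c_i,b_i$, and the uniqueness of the coset $s(c)$ among cosets of $nA$ meeting $\pi^{-1}(c)\cap(b_i+nB)$ is exactly your key identity again; the resulting formula uses only the group operations, the map $B\to C$, and the predicate $nB$, all present in the language of Situation \ref{rvsit}. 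Your closing remark that $B/nA$ is not a sort, so definability must be read as definability of the graph as a relation on $C\times B$, is the right way to interpret the statement (and the corresponding retraction $B/nA\to A/nA$, which is what Lemma \ref{ses-grp} actually uses, is then definable as well). The one sentence that is dispensable is the final claim that ``every splitting arises in this manner for an appropriate choice of $b_i$'': for the lemma as stated you only need that, for a given splitting $s$, choosing $b_i$ to lift $s(c_i)$ recovers $s$, which you have already shown.
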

For a proof see for example \cite[Proposition 8.3]{one-dimensional-p-adic}. 
\begin{lema}\label{ses-grp}
Let $0\to A\to B\to C\to 0$ be an exact sequence of abelian groups as in \ref{rvsit}, and let $X\subset B$ be a type-definable subgroup. Then either $X\subset A$ or 
$X=\bigcap_n X_n$ where each $X_n$ is a finite extension of a group of the form
$g^{-1}(Y_n)\cap nB$ for $Y_n\subset C$ a type-definable subgroup.
\end{lema}

\begin{proof}
Let us denote $g:B\to C$.
As $C$ is torsion-free $g(X)$ is either trivial or infinite.
If $g(X)$ is trivial, then $X\subset A$. Otherwise
by Lemma \ref{opa-group2}, which applies by Lemma \ref{opases},
we have that $\bigcap_n nA\subset X$. 
We take $X_n=X+nA$, so that $nA\subset X_n$, and $X=\bigcap_n X_n$ by compactness.
If we take $Z_n=nB\cap X_n$ then $X_n/Z_n\cong (X_n+nB)/nB$ is finite, because
$B/nB$ is.

We claim that $Z_n\subset g^{-1}g(Z_n)\cap nB\subset X_n$. This follows from
 Lemma \ref{sesfin}. Indeed, denote $W=g^{-1}g(Z_n)\cap nB$, and note that
$nA\subset W$ and $nA\subset X_n$. Take $s:C\to B/nA$
a section of $B/nA\to C$, so that $B/nA=A/nA\oplus s(C)$. Note that
$nB/nA= ns(C)$. It follows that $Z_n/nA=X_n/nA\cap nB/nA=ns(C)\cap X_n/nA$.
So we have
$g^{-1}g(Z_n)/nA= A/A_n+Z_n/nA=A/nA\oplus (ns(C)\cap X_n/nA)$. Finally, we have
$W/nA=g^{-1}g(Z_n)/nA\cap nB/nA =(A/nA\oplus (ns(C)\cap X_n/nA))\cap ns(C)=
ns(C)\cap X_n/nA\subset X_n/nA$ as claimed.

We conclude that the lemma holds with $Y_n=g(Z_n)$.
\end{proof}

\begin{lema}\label{z-add}
Let $\Gamma$ be a $Z$-group. If $X\subset \Gamma$ is a type-definable subgroup of $(\Gamma,+)$ 
then $X$ is a small intersection of 
groups of the form $o(a)$ and $n\Gamma$ for $n\in\mathbb{Z}_{>0}$.
\end{lema}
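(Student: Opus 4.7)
I will begin by defining
\[
Y = \bigcap\bigl\{o(a) : a \in \Gamma_{>0},\ X \subseteq o(a)\bigr\}, \qquad
Z = \bigcap_{n \in \mathbb{Z}_{>0}}(X + n\Gamma).
\]
Each is a small intersection of subgroups of the desired form: $Y$ directly, and for each $n$ the subgroup $X + n\Gamma$ has finite index in $\Gamma$ and contains $n\Gamma$, so since $\Gamma/n\Gamma$ is cyclic of order $n$ its subgroups correspond to divisors of $n$, giving $X + n\Gamma = d\Gamma$ for some $d \mid n$; thus $Z$ is also an intersection of subgroups of the form $d\Gamma$. Both contain $X$, so it will suffice to establish $Y \cap Z \subseteq X$.

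Given $\gamma \in Y \cap Z$, by type-definability of $X$ it will be enough to prove $\gamma \in D$ for every definable $D \supseteq X$. I will use Presburger quantifier elimination to write $D$ as a finite disjoint union of cells $\bigsqcup_i I_i \cap (c_i + N\Gamma)$ for a common modulus $N$, with each $I_i$ an interval. The crucial step, modeled on the proof of Lemma \ref{acvfl}, is to produce a definable single-cell envelope $D' = I \cap n\Gamma \supseteq X$, where $I = (a, b)$ is an interval containing $0$, $n\Gamma \supseteq X$, and $D' \subseteq D$. This I will obtain via the standard compactness trick for type-definable subgroups: for each $m \geq 1$ there is a symmetric definable $E \supseteq X$ with $0 \in E$ and $mE \subseteq D$, and a careful analysis of the Presburger cells of $E$, together with $X + (E - E) \subseteq D$, should let one extract a single cell of the desired form.

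Granted such a $D'$, the conclusion will proceed cleanly. Since $X \subseteq n\Gamma$, the group $n\Gamma$ lies in the family defining $Z$, so $\gamma \in n\Gamma$. Using $X = -X$ together with $X \subseteq I = (a, b)$ and $a < 0 < b$, one obtains $X \subseteq (-c', c')$ for $c' = \min(|a|, b)$; since $ny \in X$ for all $y \in X$ and $n \geq 1$, each $|ny| < c'$, so $X \subseteq o(c')$, and $o(c')$ lies in the family defining $Y$. Therefore $\gamma \in Y \subseteq o(c')$ gives $|\gamma| < c'$, whence $\gamma \in (-c', c') \cap n\Gamma \subseteq I \cap n\Gamma = D' \subseteq D$, as required.

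The main obstacle will be the construction of the single-cell envelope $D'$. In Lemma \ref{acvfl} the analogous step works because balls are themselves additive subgroups, so the envelope sits inside a definable subgroup of the form ``finite extension of a ball''; in a $Z$-group intervals are not subgroups, and the envelope $D' = I \cap n\Gamma$ is not a subgroup either. Nonetheless its shape is precisely captured by the pair $(o(c'), n\Gamma)$, which is why the final step still closes. Carrying out the reduction requires combining the interval and congruence components of the Presburger cells of $E$, and using both the subgroup structure of $X$ and compactness to rule out $X$ essentially straddling several cells of $D$ at once.
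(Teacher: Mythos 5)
The paper itself does not prove this lemma; it defers to Proposition 3.5 of \cite{one-dimensional-p-adic}, so I can only judge your argument on its own terms. Your global architecture is sound: reducing the lemma to the claim that every definable $D\supseteq X$ contains a set of the form $o(c')\cap n\Gamma$ containing $X$ is correct, and your derivation of this from a single-cell envelope $I\cap n\Gamma$ (via $c'=\min(|a|,b)$ and the observation that a \emph{subgroup} contained in $(-c',c')$ is automatically contained in $o(c')$) is clean. Two small repairs: when $X$ is cofinal in $\Gamma$ no bounded $I$ exists and the envelope must simply be $n\Gamma$ (your formula for $c'$ is then vacuous, but $Z\subseteq n\Gamma\subseteq D$ already suffices); and to obtain a \emph{small} intersection you should conclude $X=\bigcap_D\bigl(o(c'_D)\cap n_D\Gamma\bigr)$ over a small family of definable sets defining $X$, rather than via $Y$ as you define it, whose index set is a priori not small.

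The genuine gap is the single-cell envelope itself, which you only assert (``a careful analysis \dots should let one extract a single cell''); this is where the entire content of the lemma lives, namely ruling out $X$ straddling several Presburger cells. The step does not fail, but it needs an actual argument, e.g.: take symmetric definable $E\ni 0$ with $X\subseteq E$ and $E+E+E\subseteq D$, with common modulus $N$, and put $d\Gamma=X+N\Gamma$. If $X$ is cofinal, some cell of $E$ has the form $[\alpha,\infty)\cap(c+N\Gamma)$, so $N\Gamma\subseteq E-E$ and $X\subseteq d\Gamma\subseteq E+E+E\subseteq D$. If $X$ is bounded and nontrivial, let $L=\{g\geq 0: g\leq x\text{ for some }x\in X\}$; since $X$ is a group, $L+L\subseteq L$, equivalently the complementary upper set $U$ satisfies $U-L\subseteq U$. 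As $NX_{\geq 0}$ is cofinal in $X_{\geq 0}$, the set $X_{\geq 0}\cap N\Gamma$ is cofinal in $L$, so some cell $(\alpha_0,\beta_0)\cap N\Gamma$ of $E$ contains a cofinal subset of $L$, forcing $\beta_0\in U$ and $\alpha_0\in L\cup\Gamma_{<0}$. Setting $\beta_1=\beta_0-\max(\alpha_0,0)-N\in U$, every $m\in N\Gamma$ with $|m|<\beta_1$ is a difference of two elements of that cell, hence lies in $E+E$; and every $g\in(-\beta_1,\beta_1)\cap d\Gamma$ can be written $g=x+m$ with $x\in X_{\geq 0}\cap(g+N\Gamma)$ and $|m|<\beta_1$, hence lies in $E+E+E\subseteq D$, while $X\subseteq(-\beta_1,\beta_1)\cap d\Gamma$ because $\beta_1\in U$. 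With some such completion (plus the trivial case $X=\{0\}$, where $(-1,1)\cap\Gamma=\{0\}$ works) your proof goes through.
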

For a proof see \cite[Proposition 3.5]{one-dimensional-p-adic}.

Now we put together Lemmas \ref{z-add}, \ref{psf-sgr} and \ref{ses-grp}.

\begin{lema}\label{psfv0-rv}
Suppose $K$ is a model of PL$_0$.
If $X\subset RV$ is a type-definable subgroup, then it is a small intersections of groups
which are finite extensions of groups of one of the following forms:
\begin{enumerate}
\item the trivial group
\item $(k^{\times})^{n}$
\item $RV^n\cap v^{-1}(o(a))$
\item $RV^n$
\end{enumerate}
\end{lema}

\begin{prop}\label{psfv0-mult}
Suppose $K$ is a model of PL$_0$.
Then every 
type-definable subgroup $X\subset K^{\times}$ is a small intersection of groups
which are finite extensions of groups of one of the following forms:
\begin{enumerate}
\item $(\Oval^{\times})^{n}$
\item $o(a)\cap (K^{\times})^{n}$
\item $(K^{\times})^{n}$
\item $U_a$ for $a>0$.
\end{enumerate}
\end{prop}

\begin{proof}
By Lemma \ref{rv-is-opa}, Lemma \ref{psfv0-rv} 
and Lemma \ref{opa-group}, if $rv(X)$ is infinite then we have cases
1-3.
Otherwise we may take $X\subset U_1$ and then we conclude using Lemma \ref{pll}
with the filtration $w:U_1\to \Gamma_{\infty}$ given by $w(x)=v(1-x)$.
This lemma applies by Lemma \ref{psf-add} and Lemma \ref{mult-opa}.
\end{proof}

Next we do the analysis for ACVF.

\begin{lema}\label{q-add}
If $\Gamma$ 
is a divisible ordered abelian group, and $X\subset \Gamma$ is a type-definable subgroup
then $X$ is a small intersection of groups of the form $o(a)$ or it is trivial.
\end{lema}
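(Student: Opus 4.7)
The plan is to identify every non-trivial type-definable subgroup $X\subset\Gamma$ with a small intersection of groups $o(a)$. Let $A$ be a small parameter set over which $X$ is type-definable, and set $A'=\mathrm{dcl}(A)$. Define $V_X=\{a\in\Gamma_{>0}:X\subset o(a)\}$. The claim is that $X=\bigcap_{a\in V_X\cap A'}o(a)$, interpreting the empty intersection as $\Gamma$, or else $X=\{0\}$. The inclusion $X\subset\bigcap_{a\in V_X\cap A'}o(a)$ is immediate, and this intersection is small because $A'$ is small.

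For the reverse inclusion, suppose $z\in\bigcap_{a\in V_X\cap A'}o(a)$ but $z\notin X$; say $z>0$. Since $z\notin X$ and $X$ is type-definable over $A$, pick an $A$-definable $D\supset X$ with $z\notin D$. By quantifier elimination for divisible ordered abelian groups, $D$ is a finite union of intervals with endpoints in $A'$. I distinguish two cases according to whether $V_X\cap A'$ is empty.

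If $V_X\cap A'\neq\emptyset$, pick $a_0\in V_X\cap A'$, so that $X\subset o(a_0)\subset(-a_0,a_0)$ and also $z\in o(a_0)\subset(-a_0,a_0)$. After replacing $D$ by $D\cap(-a_0,a_0)\cap(-D)$, I may assume $D$ is symmetric and contained in $(-a_0,a_0)$. The $0$-component of $D$ has the form $(-\epsilon,\epsilon)$ with $\epsilon\in A'$. If $X\subset(-\epsilon,\epsilon)$, then since $X$ is a subgroup, $nx\in X\subset(-\epsilon,\epsilon)$ for all $n\in\mathbb{Z}$, giving $x\in o(\epsilon)$; so $\epsilon\in V_X\cap A'$, whence $z\in o(\epsilon)\subset(-\epsilon,\epsilon)\subset D$, contradicting $z\notin D$. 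If instead $X$ meets an outer component of $D$, one iterates the analysis using subgroup differences of elements of $X$ lying in distinct components to successively refine $D$; the iteration terminates because $D$ has finitely many components, yielding a smaller $\epsilon'\in V_X\cap A'$ and the same contradiction. If on the other hand $V_X\cap A'=\emptyset$, then for every $a\in A'$ some $x\in X$ and some $n$ satisfy $|nx|\geq a$, so by saturation there is $x^*\in X$ in an archimedean class strictly greater than every element of $A'$. Then the multiples $nx^*$ are unbounded in $\Gamma$ relative to $A'$, forcing every $A$-definable $D\supset X$ to contain an unbounded tail $(M_D,\infty)$ with $M_D\in A'$. By saturation we find $\beta\in\Gamma$ above every element of $A'$; such $\beta$ lies in every such $D$, hence $\beta\in X$ by type-definability. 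Closure under subgroup differences, since any $\gamma\in\Gamma$ can be written as $(\beta+\gamma)-\beta$ with both summands above $A'$, then gives $X=\Gamma$, the empty intersection.

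The main obstacle is the iterative refinement of the first case: a generic $A$-definable superset $D\supset X$ can have several disjoint interval components, and one must carefully combine them with the subgroup structure of $X$ to produce the witness $\epsilon\in V_X\cap A'$. An alternative approach would consider the convex hull $\bar X$ of $X$ as a type-definable convex subgroup and identify it with $X$ via archimedean quotient arguments, but this too requires a careful reduction through the chain of archimedean classes present in $\Gamma$.
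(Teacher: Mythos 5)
The paper's own ``proof'' is only a citation to Proposition 3.5 of the $p$-adic paper, so I judge your argument on its own terms. Your overall plan is sound: the identity $X=\bigcap_{a\in V_X\cap \mathrm{dcl}(A)}o(a)$ (for nontrivial $X$) is correct, and your treatment of the case $V_X\cap A'=\emptyset$ works, up to the small repair that $\beta$ must be chosen after $\gamma$ so that both $\beta$ and $\beta+\gamma$ lie above $A'$. The genuine gap is exactly where you flag it: the case in which $X$ is not contained in the $0$-component of $D$. ``One iterates the analysis using subgroup differences \dots to successively refine $D$'' is not an argument, and the natural implementations fail: if $x\in X$ lies in an outer component, the sets you would intersect $D$ with (e.g.\ translates $D-ix$ with $ix\in X$) are definable only over $A\cup\{x\}$, so the radius $\epsilon'$ extracted from the refined set need not lie in $A'=\mathrm{dcl}(A)$, and then you cannot conclude $z\in o(\epsilon')$, since $z$ was only assumed to lie in $o(a)$ for $a\in V_X\cap A'$. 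Nor is it clear why the process terminates with a witness of the required form.

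The missing idea is to prove first that $X$ is \emph{convex}; then the bad case is vacuous, because a convex set containing $0$ and contained in $D$ must lie in the $0$-component of $D$. Convexity follows from a compactness-plus-pigeonhole argument: given an $A$-definable symmetric $D_0\supset X$, compactness yields a symmetric $A$-definable $D$ with $0\in D$, $X\subset D$ and $D+D\subset D_0$. If $D$ has $k$ maximal convex components and $x\in X$ with $x>0$, then two of the $k+1$ points $0,x,2x,\dots,kx\in D$ lie in the same component, so $[ix,jx]\subset D$ for some $i<j$, whence $[0,x]\subset [ix,jx]-ix\subset D+D\subset D_0$, using $-ix\in X\subset D$. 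Since $D_0$ was an arbitrary $A$-definable superset, $[0,x]\subset X$, so $X$ is convex. With that in hand your $0$-component computation (including the observation that the radius $\epsilon$ lies in $A'$ and that $X\subset(-\epsilon,\epsilon)$ forces $X\subset o(\epsilon)$) goes through verbatim and completes the proof.
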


The proof in \cite[Proposition 3.5]{one-dimensional-p-adic} works.

The following lemma is well known, but we add a simple, self-contained proof
for the benefit of the reader.
\begin{lema}\label{acf-sgr}
Suppose $k$ is an algebraically closed field.
Let $G$ be a one-dimensional, connected, algebraic group over $k$.
Let $X\subset G(k)$ be an infinite type-definable subgroup.
Then $X=G(k)$.
\end{lema}
\begin{proof}
Let $Y\subset G(k)$ a definable set that contains $X$.
We have to see $Y=G(k)$.
Let $Z$ be a definable set with $X\subset Z$ and $Z-Z\subset Y$.

It is a basic fact that a definable subset of an irreducible, one-dimensional,
algebraic variety is finite or cofinite. So we conclude that $Z$ is cofinite
in $G(k)$, say $G(k)\setminus Z=F$ is finite.
If we take $x\in G(k)$ and $z\in G(K)\setminus(F \cup (x-F))$,
then $x=(x-z)+z$ belongs to $Z-Z$. As $x$ is arbitrary we conclude
$G(k)=Z-Z=Y$ as required.
\end{proof}
\begin{lema}\label{valuation-formal-group}
If $K$ is a model of ACVF,
 and $G$ is a definable commutative group in $K$,
and $f:\Mval\to G$ is a bijection satisfying $f^{-1}(f(x)+f(y))=x+y+r$ for an 
$r$ with $v(r)>{\rm Max}\{v(x),v(y)\}$,
$f(0)=0$ and $f^{-1}(-f(x))=-x+s$ for an $s$ satisfying $v(s)\geq v(x)$,
then the funtion $w:G\to \Gamma_{\infty}$ given by $w=vf^{-1}$ satisfies the hypotheses of Lemma \ref{acvfl}.
\end{lema}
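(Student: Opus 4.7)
The plan is to verify, one at a time, the hypotheses of Lemma~\ref{acvfl} applied to $w=v\circ f^{-1}\colon G\to\Gamma_\infty$. I write $B_r^w=\{g\mid w(g)\geq r\}$ and $B_r^{w,-}=\{g\mid w(g)>r\}$ to distinguish them from the $B_r\subset K$ of the paper.

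The ultrametric inequality $w(a+b)\geq\min\{w(a),w(b)\}$ and the maximality $w(0)=\infty$ are immediate from $v(r)>\max\{v(x),v(y)\}$ and $f(0)=0$. For $w(-g)=w(g)$, I would apply the group-law formula to the identity $f(x)+(-f(x))=0$: writing $-f(x)=f(-x+s)$, the formula gives $0=x+(-x+s)+r=s+r$, so $s=-r$. Combined with $v(r)>\max\{v(x),v(-x+s)\}\geq v(x)$, this upgrades the hypothesis $v(s)\geq v(x)$ to the strict bound $v(s)>v(x)$, and then $v(-x+s)=v(x)$ yields $w(-f(x))=w(f(x))$. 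This strict strengthening of $v(s)$ is the key refinement that drives every remaining step.

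For the infinity of $B_r^w/B_r^{w,-}$, I would observe that for $x_1,x_2\in B_r$ the group-law formula gives $f^{-1}(f(x_1)-f(x_2))=x_1-x_2+\epsilon$ with $v(\epsilon)>r$ (both ``error'' terms have valuation exceeding $r$ by the strict bounds), so $w(f(x_1)-f(x_2))>r$ is equivalent to $v(x_1-x_2)>r$. Hence $f$ induces a set-bijection $B_r^w/B_r^{w,-}\cong (B_r\cap M)/(B_r^-\cap M)$, and the latter has the size of the residue field, which is infinite in ACVF.

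The main obstacle is showing that every definable subset of $G$ is a boolean combination of translates of $B_r^w$, $B_r^{w,-}$, and trivial subgroups. Since $f$ is a definable bijection, definable subsets of $G$ pull back through $f^{-1}$ to definable subsets of $M$; by Holly's theorem (Proposition~\ref{holly}) these are boolean combinations of balls and singletons with centres in $M$ (centres outside $M$ contribute nothing). It therefore suffices to prove
\[
f(a+B_r\cap M)=f(a)+B_r^w\qquad(a\in M,\ r>0),
\]
together with the analogues for open balls and singletons. The inclusion $\supseteq$ is routine: for $y\in B_r$, $f(a)+f(y)=f(a+y+r_1)$ with $v(r_1)>v(y)\geq r$, so $a+y+r_1\in a+B_r$. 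The reverse inclusion is the crux: given $z\in B_r$, set $y:=f^{-1}(f(a+z)-f(a))$, so that $f(a)+f(y)=f(a+z)$ and the group law yields $y+r_1(y)=z$ with $v(r_1(y))>\max\{v(a),v(y)\}$. If $v(y)<r$, then $v(r_1(y))>v(y)$ would force $v(z)=v(y+r_1(y))=v(y)<r$, contradicting $z\in B_r$; hence $v(y)\geq r$. This fixed-point step is the heart of the proof and relies essentially on the strict inequality $v(r_1)>\max\{v(a),v(y)\}$ provided by the group-law hypothesis.
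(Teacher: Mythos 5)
Your proof is correct and follows essentially the same route as the paper's: verify the ultrametric axioms directly from the error bounds, show $f(a+B_r)=f(a)+B_r^w$ (and the open-ball analogue) by the same ``solve for $y$ and use $v(r_1)>v(y)$'' argument, and conclude via Holly's Swiss cheese decomposition. The only differences are cosmetic refinements on your part: you derive the strict bound $v(s)>v(x)$ from $s=-r$ where the paper just uses $v(s)\geq v(x)$ together with the symmetry $w(x)=w(-(-x))$, and you explicitly check the infiniteness of $B_r^w/B_r^{w,-}$, which the paper leaves implicit.
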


\begin{proof}
Suppose $x,y\in G$ such that $w(x),w(y)\geq t$. We have to see that $w(x+y)\geq t$.
There are $a,b\in \Mval$ such that $x=f(a)$ and $y=f(b)$. Then $f^{-1}(x+y)=a+b+r$ with
$v(r)>{\rm Max}\{v(a),v(b)\}\geq t$ and so $w(x+y)=v(a+b+r)\geq t$.

Now suppose that $w(x)=t$, we will see that $w(-x)\geq t$. If $x=f(a)$ then
$f^{-1}(-x)=-a+s$, with $v(s)\geq v(-a)=t$. From here $w(-x)=v(-a+s)\geq t$.
Applying this to $-x$ we get $w(x)=w(-x)$

If we call $G_t$ and $G_t^-$ the sets given by $w(x)\geq t$ and $w(x)>t$ in $G$,
then we show that $f(a+B_t)=f(a)+G_t$ and $f(a+B_t^-)=f(a)+G_t^-$.
Indeed if $x\in f(a)+G_t$ then there is $b\in B_t$ such that $x=f(a)+f(b)$.
Then $f^{-1}(x)=a+b+r$. We have $v(r)>v(b)$ and so $v(b+r)=v(b)\geq t$, so
$x\in f(a+B_t)$.
In the other direction if $x\in f(a+B_t)$ then there is $b\in \Mval$ such that
$x=f(a)+f(b)$. In this case $f^{-1}(x)=a+b+r$ with $v(r)>v(b)$ and so $v(b)=v(b+r)\geq t$.
From this $wf(b)=v(b)\geq t$, and we conclude $f(b)\in G_t$ and $x\in f(a)+G_t$.
The proof with $B_t^-$ and $G_t^-$ is similar.

Now Swiss cheese decomposition applied to $\Mval$ implies that every definable subset of $G$
is a boolean combination of translate of the trivial group or $G_r, G_r^{-}$ as required.

Note that $f$ produces a group isomorphism $B_t/B_t^-\cong G_t/G_t^-$,
so $G_t/G_t^-$ is infinite, because it is isomorphic to $k$.
\end{proof}

In fact in the applications one has $v(r)\geq v(x)+v(y)$ and $v(s)\geq 2v(x)$.

\begin{prop}\label{acvf-mult}
Suppose $K$ is an algebraically closed field. Suppose
$X\subset K^{\times}$ is a type-definable subgroup.
Then $X$ is a small intersection of groups of the form $o(a)$, or it is equal to $K^{\times}$ or it is equal to $\Oval^{\times}$, or $X$ is a
a small intersection of definable groups, which are finite extensions
of groups of the form $U_r$ or $U_r^{-}$.
\end{prop}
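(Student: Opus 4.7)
My plan is to mirror the proof of Proposition \ref{acvf0-mult}, splitting according to $v(X)$ and then $\pi(X)$, but to handle the subgroup $U_0^-$ through the formal-group route of Lemmas \ref{valuation-formal-group} and \ref{acvfl} rather than via a logarithm, which is unavailable in positive residue characteristic.

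First, if $v(X)\neq 0$, then since $\Gamma$ is torsion-free $v(X)$ is infinite; Lemmas \ref{v-aas} and \ref{aas-group} give $X=v^{-1}(v(X))$, and Lemma \ref{q-add} expresses $v(X)$ as a small intersection of groups of the form $o(a)$ in $\Gamma$ (the empty intersection giving $K^{\times}$), which pulls back to the desired small intersection of $o(a)$'s in $K^{\times}$.

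Next, assume $X\subseteq O^{\times}$, and apply Lemmas \ref{mult-aas} and \ref{aas-group} to $\pi\colon O^{\times}\to k^{\times}$. By stability of ACF, type-definable subgroups of $k^{\times}$ are intersections of definable ones, and the only proper definable subgroups of the algebraically closed $k^{\times}$ are the finite groups of roots of unity; hence if $\pi(X)$ is infinite then $\pi(X)=k^{\times}$ and $X=O^{\times}$. Otherwise $\pi(X)$ is finite, so $X_0:=X\cap U_0^-$ has finite index in $X$. To describe $X_0$, apply Lemma \ref{valuation-formal-group} to the definable bijection $f\colon M\to U_0^-$, $f(x)=1+x$: the identities $(1+x)(1+y)=1+(x+y+xy)$ with $v(xy)>\max(v(x),v(y))$ and $(1+x)^{-1}=1+\sum_{k\geq 1}(-x)^k$ with $v(\sum_{k\geq 2}(-x)^k)\geq 2v(x)$ verify the group-theoretic hypotheses, and Swiss-cheese decomposition (Proposition \ref{holly}) on $M$ together with the compatibility of $f$ with ball translates proved in Lemma \ref{valuation-formal-group} verifies the remaining hypothesis of Lemma \ref{acvfl}. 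That lemma then writes $X_0=\bigcap_j Y_j$, with each $Y_j$ a definable subgroup of $U_0^-$ that is a finite extension of $U_r$ or $U_r^-$ for some $r>0$ (or trivial/finite; in that degenerate subcase $X$ is itself finite and the identity $X=\bigcap_{r>0}X\cdot U_r^-$ provides the required presentation directly).

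Finally, to lift the description from $X_0$ to $X$, close $\{Y_j\}$ under finite intersections and set $Z_j:=X\cdot Y_j$. Because $X_0\subseteq Y_j$ we have $X\cap Y_j=X_0$, so $Z_j$ is a union of $[X:X_0]$ cosets of $Y_j$ and is therefore a definable subgroup, a finite extension of $U_r$ or $U_r^-$. The inclusion $X\subseteq\bigcap_j Z_j$ is immediate; for the reverse, given $g\in\bigcap_j Z_j$ the element $c(j)\in X/(X\cap Y_j)=X/X_0$ recording the $Y_j$-coset of $g$ is canonically the same element $c$ for every $j$, and any lift $x\in X$ of $c$ satisfies $gx^{-1}\in\bigcap_j Y_j=X_0\subseteq X$, whence $g\in X$. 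The main obstacle I foresee is the formal-group setup on $U_0^-$ via Lemma \ref{valuation-formal-group}, ensuring its hypotheses truly transfer to the multiplicative structure; the final coset-lifting is then clean thanks to the identification $X\cap Y_j=X_0$.
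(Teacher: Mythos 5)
Your argument is correct and follows the paper's own proof essentially step for step: the same trichotomy on $v(X)$ and then $\pi(X)$ using Lemmas \ref{v-aas}, \ref{aas-group}, \ref{q-add} and \ref{mult-aas}, followed by the same reduction of the $1+M$ case to Lemma \ref{acvfl} via Lemma \ref{valuation-formal-group}. The only difference is that you spell out the passage from $X\cap(1+M)$ back to $X$ (the $Z_j=X\cdot Y_j$ coset argument), a detail the paper compresses into ``we may assume $X\subset 1+M$''.
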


\begin{proof}
If $v(X)$ is not trivial then it is infinite because $\Gamma$ is torsion free,
so $X=v^{-1}v(X)$ by Lemma \ref{opa-group} and Lemma \ref{v-opa},
so $X$ is either $K^{\times}$ or a small intersection of groups
of the form $o(a)$ by Lemma \ref{q-add}.

Otherwise $X\subset \Oval^{\times}$. If  $\pi:\Oval^{\times}\to k^{\times}$ is the residue map
then $\pi(X)\subset k^{\times}$ is a type-definable group. We conclude
that $\pi(X)=k^{\times}$ or it is finite, by Lemma \ref{acf-sgr}.
In the first case because $\pi$ is opaque by Lemma \ref{mult-opa}, we conclude $X=\Oval^{\times}$, by Lemma \ref{opa-group}.
In the other case we may assume $X\subset 1+\Mval$.

For this group we have the function $w:1+\Mval\to \Gamma_{\infty}$ given by
$w(1+x)=v(x)$, this satisfies the properties of Lemma \ref{acvfl},
 see Lemma \ref{valuation-formal-group}.
\end{proof}

\subsection{Subgroups of the elliptic curves}\label{ellgrosec}

\begin{prop}
Let $K$ be a model of PL$_0$. Let $E$ be an elliptic curve over $K$ given by a given Weierstrass equation with 
coefficients in $\Oval$. Then if $X\subset E_0(K)$ is a type-definable subgroup then $X$ is a small intersection
of groups which are finite extensions of groups of one of the following forms:
\begin{enumerate}
\item $nE_0(K)$
\item $E_r$
\item $E_r^{-}$
\end{enumerate}
\end{prop}
\begin{proof}
The proof is similar to the one in 
 Proposition \ref{psfv0-mult}. 

In detail, if we denote $\pi:E_0(K)\to \tilde{E}_0(k)$ the reduction map,
then if $\pi(X)$ is infinite we conclude we have the first case by
 Lemma \ref{opa-ell},
Lemma \ref{opa-group},
Lemma \ref{psf-sgr}
 and 
Lemma \ref{multiplication-by-n-henselian}.

Otherwise we may take $X\subset E_0^-$ 
and then we conclude using Lemma \ref{pll}
with the filtration $w:E_0^-\to \Gamma_{\infty}$ given by 
$w(x,y)=v(-\frac{x}{y})$ as described in Lemma \ref{filtration-elliptic}.
Lemma \ref{pll} applies because of Lemma \ref{filtration-elliptic-subquotient},
Lemma \ref{psf-add} and Lemma \ref{opa-ell2}.
\end{proof}

\begin{prop}\label{acvf-ell}
Let $K$ be a model of ACVF.
 Let $E$ be an elliptic curve over $K$ given by a given Weierstrass equation with 
coefficients in $\Oval$.

If $X\subset E_0$ is a type-definable subgroup then it is equal to $E_0$ 
or it is a small intersection of groups which are finite
extensions of groups of the form
$E_r$ or $E_r^{-}$.
\end{prop}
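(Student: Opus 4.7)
The plan is to follow the two-step pattern of Proposition \ref{acvf-mult}: first analyze $X$ modulo $E_0^-$ via the residue map $\pi:E_0\to\tilde{E}_0(k)$, and then handle what remains inside $E_0^-$ via the formal-group filtration of Lemma \ref{filtration-elliptic}.

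First I would apply Lemma \ref{aas-ell-acvf}: $\pi$ is aas. The group $\tilde{E}_0$ is a connected one-dimensional algebraic group over the algebraically closed residue field $k$, so it is $\mathbb{G}_a$, $\mathbb{G}_m$, or an elliptic curve, and by strong minimality of ACF (with $k$ stably embedded in ACVF) any infinite type-definable subgroup of $\tilde{E}_0(k)$ equals $\tilde{E}_0(k)$. Hence if $\pi(X)$ is infinite, Lemma \ref{aas-group} gives $X=\pi^{-1}\pi(X)=E_0$ and we are done. Otherwise $\pi(X)$ is finite, so $X_0:=X\cap E_0^-$ has finite index in $X$.

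The second move is to apply Lemma \ref{valuation-formal-group} to the bijection $f:E_0^-\to M$ from Lemma \ref{filtration-elliptic}: its bounds $v(r)\geq v(f(a))+v(f(b))$ and $v(s)\geq 2v(f(a))$ imply the required strict ultrametric-type inequality because $f$ takes values in $M$. The transported function $w=v\circ f^{-1}$ then makes $E_0^-$ an instance of Lemma \ref{acvfl}, with "balls" $G_r,G_r^-$ equal to $E_r,E_r^-$ and with infinite quotients $E_r/E_r^-\cong B_r/B_r^-\cong k$. Applying Lemma \ref{acvfl} writes $X_0=\bigcap_i D_i$ with each $D_i$ a definable subgroup of $E_0^-$. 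If $X_0$ is finite (in particular if $X$ itself is finite), then $\bigcap_{r>0}E_r=\{0\}$ yields the small intersection $X=\bigcap_{r>0}(X+E_r)$, each summand a finite extension of $E_r$. Otherwise $X_0$ is infinite, so every $D_i\supset X_0$ is infinite, and Lemma \ref{acvfl} forces $D_i$ to be a finite extension of some $E_{r_i}$ or $E_{r_i}^-$.

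It remains to lift the presentation of $X_0$ to one of $X$. The natural candidate is $D_i':=D_i+X$: since $X/X_0$ is finite and $X_0\subset D_i$, this is a finite union of cosets of $D_i$, hence definable, and $D_i'/D_i\cong X/(X\cap D_i)$ being finite makes $D_i'$ a finite extension of $E_{r_i}$ or $E_{r_i}^-$. The content is verifying $X=\bigcap_i D_i'$: given $y\in\bigcap_i D_i'$, write $y=d_i+x_i$ with $d_i\in D_i$, $x_i\in X$; then $y-x_i\in D_i\subset E_0^-$ shows $\pi(y)\in\pi(X)$, so one fixes $x^*\in X$ with $\pi(x^*)=\pi(y)$ and observes $x_i-x^*\in X\cap E_0^-=X_0\subset D_i$, so $y-x^*\in D_i$ for every $i$, yielding $y-x^*\in\bigcap_i D_i=X_0\subset X$. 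The only non-routine step is this final lift: the intersection produced by Lemma \ref{acvfl} captures $X_0$ rather than $X$, and one must confirm that enlarging each $D_i$ uniformly by the finite group $X/X_0$ is compatible with intersection. This works precisely because $\pi(X)$ is finite.
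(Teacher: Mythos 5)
Your proof is correct and follows essentially the same route as the paper, which simply invokes the argument of Proposition \ref{acvf-mult} with the map of Lemma \ref{filtration-elliptic}: the residue map $\pi:E_0\to\tilde{E}_0(k)$ handled via Lemmas \ref{aas-ell-acvf} and \ref{aas-group} together with strong minimality of $k$, and the kernel $E_0^-$ handled via Lemmas \ref{valuation-formal-group} and \ref{acvfl}. Your explicit lift of the presentation of $X\cap E_0^-$ to one of $X$ by adding the finite group $X/(X\cap E_0^-)$ to each term is a detail the paper leaves implicit, and it is carried out correctly.
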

\begin{proof}
The proof is similar to the one in Proposition \ref{acvf-mult}.

In detail,
denote $\pi:E_0(K)\to \tilde{E}_0(k)$ the reduction map.
then $\pi(X)\subset \tilde{E}_0(k)$ is a type-definable group. 
We conclude $\pi(X)=\tilde{E}_0(k)$ or it is finite, by
Lemma \ref{acf-sgr}.
In the first case because $\pi$ is opaque by Lemma \ref{opa-ell}, 
we conclude $X=E_0$ by Lemma \ref{opa-group}.
In the other case we may assume $X\subset E_0^-$.
For this group we have the function $w:E_0^-\to \Gamma_{\infty}$ given by
$w(x,y)=v(-\frac{x}{y})$, which 
satisfies the properties of Lemma \ref{acvfl}, see 
Lemma \ref{valuation-formal-group} and 
Lemma \ref{filtration-elliptic}. 
\end{proof}

\subsection{Subgroups of the twisted multiplicative groups}\label{twimulgrosec}
In this section we handle the remaining case for $K$ a model of PL$_0$, i.e. for a pseudo-local  valued 
field of residue characteristic $0$.

Assume that $K$ is a Henselian valued field of residue characteristic not $2$.
And assume first that $d\in K^{\times}\setminus (K^{\times})^{2}$ is such that $v(d)=0$.
Since $K$ is Henselian there exists a unique extension of the valuation on 
$K$ to $L=K(\sqrt{d})$ (we will also see this directly). This valuation is then
$v:L\to \mathbb{Q}\otimes_\mathbb{Z}\Gamma_{\infty}$ given by the equation
$v(\alpha)=\frac{1}{2}v(N(\alpha))$.
If $\alpha=a+b\sqrt{d}$ and $a,b\in \Oval$ but $a$ and $b$ are not both in $\Mval$,
then $v(\alpha)\geq 0$ and if $v(\alpha)\geq r>0$ then 
$v(a^2-b^2d)\geq 2r$ and $v(a^2)=v(b^2d)=0$. This implies then that the equation
$x^2=d$ has approximate solution $ab^{-1}$ so an application of Hensel's Lemma implies
$d\in (\Oval^{\times})^{2}$. We conclude that $v(a+b\sqrt{d})=\text{min}\{v(a),v(b)\}$.
We have the short exact sequences $1\to \Oval_L^{\times}\to L^{\times}\to \Gamma_L\to 0$,
$1\to U_L^{-}\to \Oval_L^{\times}\to k_L^{\times}\to 1$ and
the bijection $U_L^{-}\to \Mval_L$ that gives a filtration
$U_{L,r}$ and $U_{L,r}^-$ satisfying $U_{L,r}/U_{L,r}^-\cong (L,+)$. 
We now describe the restrictions of this on $G(d)$.
To start notice that  $G(d)\subset \Oval_L^{\times}$.
Next, we have seen that $k_L=k(\sqrt{\bar{d}})$, and an application of Hensel's Lemma
shows that the image of $G(d)$ in $k_L$ is $G(\bar{d})$.
Denote the kernel of $G(d)\to G(\bar{d})$ by $G(d)^-$. 
It consists of the elements of the form $a+b\sqrt{d}$ such that $v(a-1),v(b)>0$ and 
$a^2-b^2d=1$. In this case $v(a^2-1)=v(a-1)=v(b^2d)=2v(b)$, so in particular
$v(\alpha-1)=v(b)$.
We conclude now that the map $f:G(d)^-\to \Mval$ given by $a+b\sqrt{d}\mapsto b$ is a bijection,
(that it is surjective follows from Hensel's Lemma) and satisfies that
$f(\alpha_1\alpha_2)=f(\alpha_1)+f(\alpha_2)+r$ where $r>vf(\alpha_1)+vf(\alpha_2)$
and $f(\alpha_1^{-1})=-f(\alpha_1)$.
In particular the groups $G(d)_{r}=U_{L,r}\cap G(d)$ and $G(d)_{r}^-=U_{L,r}^-\cap G(d)$ 
satisfy
$G(d)_r/G(d)_r^-=B_r/B_r^-\cong (k,+)$ via $f$.

\begin{lema}\label{opa-tmult}
Let $K$ be a Henselian valued field residue characteristic $0$.
Let $d\in K^{\times}\setminus (K^{\times})^{2}$ be such that $v(d)=0$.
Then the map $G(d)\to G(\bar{d})$ is opaque.
\end{lema}

\begin{proof}
This proof is the same as in Lemma \ref{opa-ell}.
\end{proof}

\begin{lema}\label{opa2-tmult}
Let $K$ be a Henselian valued field of residue characteristic $0$.
Let $d\in K^{\times}\setminus (K^{\times})^{2}$ be such that $v(d)=0$.
Then the maps $G(d)_r\to G(d)_r/G(d)_r^-$ are opaque.
\end{lema}

\begin{proof}
Under the bijection $G(d)^-\cong \Mval$ and after rescaling this becomes Lemma \ref{opa+}.
\end{proof}
Now assume $K$ is Henselian of residue characteristic not $2$ and assume the value
group is a $Z$-group. Take $d\in K$ such that $v(d)=1$.
As before we know that the valuation on $K$ extends in a unique way to a
valuation on $K(\sqrt{d})$. In this case we have
$v(a+b\sqrt{d})=\text{min}\{v(a),v(b)+\frac{1}{2}\}$, $\Oval_L=\Oval+\Oval\sqrt{d}$ and $k_L=k$, 
$\Gamma_L=\frac{1}{2}\Gamma$.
As before $G(d)\subset \Oval_L^{\times}$. Now the residue map $G(d)\to k$ takes $G(d)$ onto 
$\{\pm 1\}$. Its kernel is denoted $G(d)^-$ and it consists of the elements
$a+b\sqrt{d}$ such that $v(a-1)>0$ and $v(b)\geq 0$.
In this case we have the groups $U_{r,L}$ for $r\in \Gamma$ given by the elements
$a+b\sqrt{d}$ such that $v(a-1)\geq r$ and $v(b)\geq r$, and the groups
$U_{r+\frac{1}{2},L}$ consisting of elements $a+b\sqrt{d}$ such that 
$v(a-1)\geq r+1$ and $v(b)\geq r$.
We obtain then the group isomorphisms
$U_{r,L}/U_{r+\frac{1}{2},L}\to B_r/B_{r+1}$ given by $a+b\sqrt{d}\mapsto \overline{a-1}$
and $U_{r+\frac{1}{2},L}/U_{r+1,L}\to B_r/B_{r+1}$ given by 
$a+b\sqrt{d}\mapsto \overline{b}$. If we denote $G(d)_r=U_{r,L}\cap G(d)$ then we have that
$G(d)_r=U_{r+\frac{1}{2},L}\cap G(d)$.
Indeed if $a^2-b^2d=1$ and $v(a-1)>0$, then 
$v(a-1)=v(a^2-1)=v(b^2d)=2v(b)+1$ so if $v(b)\geq 0$ then 
$v(a+b\sqrt{d}-1)=v(b)+\frac{1}{2}$.

\begin{lema}\label{opa3-tmult}
If $K$ is a henselian valued field of residue characteristic $0$, and valued group
a $Z$-group, and if $d\in K$ is such that $v(d)=1$, then the maps
$G_{r}(d)\to B_r/B_{r+1}$ given by $a+b\sqrt{d}\mapsto \bar{b}$ are opaque.
\end{lema}

\begin{proof}
Indeed the map $G(d)^-\to \Oval$ given by $a+b\sqrt{d}\mapsto b$, is bijective by 
Hensel's Lemma, under this bijection the map becomes as in Lemma \ref{opa+}.
\end{proof}

The map $f:G(d)^-\to \Oval$ mentioned in the previous proof satisfied that
$vf(\alpha)=v(\alpha)-\frac{1}{2}$ and $f(\alpha^{-1})=-f(\alpha)$,
$f(\alpha_1+\alpha_2)=f(\alpha_1)+f(\alpha_2)+r$ with $v(r)>v(f(\alpha_1))+v(f(\alpha_2))$.

\begin{prop}\label{psfv0-tmult}
Suppose $K$ is a model of PL$_0$. 
Suppose $d\in K^{\times}\setminus (K^{\times})^{2}$ and let
$G(d)$ be the corresponding twisted multiplicative group.
If $X\subset G(d)$ is a type-definable subgroup then $X$ is a small intersection of groups
which are finite extensions of groups of the form $G(d)^n$ or 
$G(d)_r$ for $r\in \Gamma_{>0}$ or $G(d)^-$.
\end{prop}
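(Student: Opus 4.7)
The plan is to mirror Proposition \ref{psfv0-mult} and the elliptic curve arguments. Since replacing $d$ by $da^2$ does not change $G(d)$ and $\Gamma$ is a $Z$-group, I may assume $v(d)\in\{0,1\}$. In each case I will use a surjective aas map $\pi:G(d)\to H$ whose target has known type-definable subgroups (either $G(\bar d)$ over the pseudo-finite residue field, or $\{\pm 1\}$), and then reduce to analyzing $X\cap G(d)^-$ using the filtration $G(d)_r\supset G(d)_r^-$ whose successive quotients are $(k,+)$.

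In the case $v(d)=0$, take $\pi:G(d)\to G(\bar d)$, which is aas by Lemma \ref{aas-tmult}. If $\pi(X)$ is infinite, then by Lemma \ref{psf-tmult} it is a small intersection of groups which are finite extensions of $G(\bar d)^n$, and Lemma \ref{aas-group} gives $X=\pi^{-1}(\pi(X))$. Because the residue characteristic is $0$, the almost-additive bijection $f:G(d)^-\to M$ makes multiplication by $n$ a bijection on $G(d)^-$, so $G(d)^-\subset G(d)^n$ and $\pi^{-1}(G(\bar d)^n)=G(d)^n$; this realizes $X$ as the required small intersection of finite extensions of the $G(d)^n$. If instead $\pi(X)$ is finite then $X\cap G(d)^-$ has finite index in $X$. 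In the case $v(d)=1$, the residue map $G(d)\to\{\pm 1\}$ has image of order at most $2$, so $X\cap G(d)^-$ has index at most $2$ in $X$.

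Both situations reduce to classifying type-definable $X\subset G(d)^-$. For this I use the filtration $G(d)_r$ with the aas projections $G(d)_r\to G(d)_r/G(d)_r^-\cong(k,+)$ given by Lemma \ref{aas2-tmult} (or Lemma \ref{aas3-tmult} in the ramified case). By Lemma \ref{psf-add} every non-trivial type-definable subgroup of $k$ equals $k$, so for each $r$ the image of $X\cap G(d)_r$ in $k$ is either trivial or all of $k$; in the latter case Lemma \ref{aas-group} forces $G(d)_r\subset X$. A compactness argument in the style of Propositions \ref{acvf0-add} and \ref{psfv0-add} then exhibits $X$ as the intersection of those $G(d)_r$ and $G(d)_r^-$ that it contains, which together with the cosets coming from the first step yields the stated description.

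The main obstacle is the identification $\pi^{-1}(G(\bar d)^n)=G(d)^n$ in the case $v(d)=0$: this hinges on the divisibility of $G(d)^-$, which in turn uses residue characteristic $0$ together with the almost-additive bijection to $M$. A secondary technical point is confirming that the reduction to $v(d)\in\{0,1\}$ suffices, for which the $Z$-group hypothesis on $\Gamma$ is essential.
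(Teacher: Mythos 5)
Your proposal is correct and follows essentially the same route as the paper: the aas reduction map $G(d)\to G(\bar d)$ (resp.\ to $\{\pm1\}$ when $v(d)=1$) combined with Lemmas \ref{psf-tmult} and \ref{aas-group}, then the filtration $G(d)_r$ with aas quotients isomorphic to $(k,+)$ handled as in Propositions \ref{psfv0-mult} and \ref{psfv0-add}. The identification $\pi^{-1}(G(\bar d)^n)=G(d)^n$ via divisibility of $G(d)^-$ (a Hensel's lemma argument in $L=K(\sqrt d)$, using residue characteristic $0$) is a detail the paper leaves implicit, and you supply it correctly.
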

\begin{proof}
If $v(d)=0$ this is a consequence of 
Lemmas \ref{opa-tmult}, \ref{psf-sgr},
\ref{opa-group} and 
\ref{pll}. Lemma \ref{pll} applies by Lemma \ref{psf-add}
and Lemma \ref{opa2-tmult}

If $v(d)=1$ this is a consequence of Lemma \ref{pll},
which applies by Lemma \ref{psf-add} and Lemma \ref{opa3-tmult}.
\end{proof}

\section{One-dimensional definable groups}
Here we give a list of all commutative definable groups of dimension $1$ in 
a model of ACVF of residue characteristic not $2$ or $3$ and in a 
model of PL$_0$.
\subsection{Classification of one-dimensional definable groups}\label{maisec}
\begin{lema}\label{finite-kernel}
Let $0\to C\to B\to A\to 0$ be a short exact sequence of abelian groups definable
in some language.
Assume that $C$ is finite, 
and that for $n$ equal to the exponent of $C$, 
 $A/nA$ is finite and the $n$-torsion of $A$ is finite.
Then there is a definable group map $A\to B$ with finite kernel and cokernel.
\end{lema}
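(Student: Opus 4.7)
The plan is to exploit the fact that multiplication by $n$ kills $C$, hence factors through $A$. Specifically, I would define $\phi \colon A \to B$ by $\phi(a) = nb$ for any lift $b \in B$ of $a$ under the projection $\pi \colon B \to A$. Since any two lifts of $a$ differ by an element of $C$ and $nC = 0$, this is well defined. Definability is then immediate: the set
\[
\{(a,c) \in A \times B : \exists b \in B,\ \pi(b) = a \text{ and } nb = c\}
\]
is definable, and by the well-definedness it is a function graph, so $\phi$ is a definable group homomorphism.

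It then remains to bound the kernel and cokernel. For the kernel, if $\phi(a) = 0$ pick any lift $b$ of $a$; then $0 = \pi(nb) = na$, so $a$ lies in the $n$-torsion of $A$, which is finite by hypothesis. For the cokernel, the image of $\phi$ is precisely $nB$, so what I need is that $B/nB$ is finite. Here I would use the induced surjection $\bar{\pi} \colon B/nB \to A/nA$: if $\bar{b} \in \ker \bar{\pi}$ then $\pi(b) = na'$ for some $a' \in A$, and choosing a lift $b' \in B$ of $a'$ gives $b - nb' \in C$, so $\ker \bar{\pi}$ is the image of $C$ in $B/nB$. Since $C$ is finite and $A/nA$ is finite by hypothesis, $B/nB$ is finite.

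I do not expect any real obstacle: the whole proof is essentially the one-line observation that $[n] \colon B \to B$ factors through $A$, with the two finiteness hypotheses on $A$ being exactly what is needed to control $\ker \phi$ and $\mathrm{coker}\,\phi$ respectively.
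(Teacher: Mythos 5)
Your proof is correct, and it takes a genuinely different (and more elementary) route than the paper. You observe that multiplication by $n$ on $B$ kills $C$ and therefore factors through $A$, giving a canonical definable homomorphism $\phi\colon A\to B$, $a\mapsto nb$ for any lift $b$; the kernel lands in the $n$-torsion of $A$ and the image is exactly $nB$, whose index you bound by $|C|\cdot|A/nA|$ via the surjection $B/nB\to A/nA$. The paper instead pulls back $B\to A$ along the multiplication-by-$n$ map of $A$ to form $B'$, notes that $B'\to B$ already has kernel the $n$-torsion of $A$ and cokernel $A/nA$, and then proves that the extension $0\to C\to B'\to A\to 0$ splits definably, by showing $C$ embeds as a pure subgroup of $B'/nB'$ and invoking finiteness of $A/nA$; the desired map is a section $A\to B'$ composed with $B'\to B$. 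Both constructions produce a map lifting multiplication by $n$ on $A$, and both use the two finiteness hypotheses in exactly the same roles. Your argument avoids the purity/splitting machinery entirely and is shorter; the paper's argument yields the slightly stronger structural fact that the pulled-back extension is definably split, which is not needed for the statement as given. There is no gap in your proposal.
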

\begin{proof}
Denote $f:B\to A$ and $i:C\to B$.
Consider $B'$ the pullback of $B\to A$ via the multiplication by $n$ map
$A\to A$. By definition $B'$ consists of the tuples $(b,a)$ such that $b\in B$,
$a\in A$ and $f(b)=na$.
Note that the canonical map $B'\to B$ has finite kernel and cokernel, in fact
the kernel is isomorphic to the $n$-torsion of $A$, and the cokernel to 
$A/nA$.

We have a short exact sequence 
$0\to C\to B'\to A\to 0$. We claim that the exact sequence
$C\to B'/nB'\to A/nA\to 0$ is a split short exact sequence.
Indeed because $A/nA$ is finite it is enough to see that $C\to B'/nB'$ is injective
with pure image. Because the exponent of $C$ is $n$ and that of $B'/nB'$ divides
$n$ it is enough to see that if $c\in C$ is such that $i(c)\in mB'$ then 
$c\in mC$ for $m$ that divides $n$. So suppose that $(b,a)\in B'$ such that
$m(b,a)=i(c)$. In other words $ma=0$ and $mb=i(c)$. We have that $f(b)=na$, 
but $na=0$ because $ma=0$ and $m$ divides $n$, so $f(b)=0$ and $b=i(c')$ for a $c'\in C$.
We conclude $c\in mC$ as required.

The composition $B'\to B'/nB'$ with a projection $B'/nB'\to C$ is a retraction
of $C\to B'$ so the sequence $0\to C\to B'\to A\to 0$ is definably split exact.
Then the composition of a section $A\to B'$ with the canonical map $B'\to B$
is as required.
\end{proof}

\begin{prop}\label{one-dimensional-acvf}
Let $K$ be a model of ACVF of residue characteristic not $2$ or $3$.

If $G$ is an abelian one-dimensional group definable in $K$, then there is a finite index
subgroup
$H\subset G$  and a finite subgroup $L\subset H$ 
such that $H/L$ is isomorphic to one of the following:

\begin{enumerate}
\item $(K,+)$
\item $(\Oval,+)$
\item $(\Mval,+)$
\item $(K^{\times},\times)$
\item $(\Oval^{\times},\times)$
\item $O(b)/\langle b\rangle$. 
\item $(1+\Mval,\times)$.
\item $(U_r,\times)$.
\item $(U_r^{-},\times)$. 
\item $E_{0}$ for an elliptic curve $E$. 
\item $E_{0}^{-}$ for an elliptic curve $E$.
\item $E_{r}$ for $r> 0$, an elliptic curve $E$.
\item $E_{r}^{-}$ for $r>0$ and an elliptic curve $E$.
\item $O_E(b)/\langle b\rangle$ for an elliptic curve $E$ of multiplicative reduction,
\end{enumerate}
In residue characteristic $0$ the finite group $L$ is not necessary.
\end{prop}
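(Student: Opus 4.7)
The plan is to combine the algebraization theorem of Section \ref{mopsection} with the subgroup classifications of Section 3. Since $G$ is commutative it is amenable, hence definably amenable, and since ACVF is NIP (hence NTP$_2$) and algebraically bounded, the theorem of \cite{mop} produces a type-definable subgroup $H_0 \subseteq G$ of small index together with a type-definable morphism $\phi\colon H_0 \to L(K)$ with finite kernel $N$, where $L$ is an algebraic group over $K$. Replacing $L$ by the Zariski closure of the image and then by its identity component, we may assume $L$ is connected of dimension one; since $K$ is algebraically closed, the possibilities are $G_a$, $G_m$, and an elliptic curve $E$, as twisted multiplicative forms become trivial over $K$. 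The residue characteristic hypothesis is used only in the elliptic case, to invoke the Tate uniformization (Proposition \ref{tate-uniformization}) and the existence of a minimal Weierstrass equation of standard shape.

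The image $\phi(H_0)$ is a type-definable subgroup of $L(K)$ and is therefore described by Proposition \ref{acvf-add} when $L=G_a$, by Proposition \ref{acvf-mult} when $L=G_m$, and by the elliptic subgroup propositions when $L=E$. In each case $\phi(H_0)$ is a small intersection of definable subgroups which are finite extensions of the model groups appearing in the list of the statement (balls, $U_r$, $U_r^-$, $E_r$, $E_r^-$, \dots). Since $H_0/N$ has small index in $G$ and has dimension one, $\phi(H_0)$ must have small index in one of these definable groups $D$; in the elliptic multiplicative-reduction case, $D$ is accessed through the $\lor$-definable cover $O_E(q)/\langle q\rangle$ produced from the Tate uniformization in Section 2.6.

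Next I use compactness to convert the type-definable data into $\lor$-definable data. As sketched in the introduction, from a type-definable group morphism and the concrete description of its image one extracts a definable local homomorphism on a definable neighborhood of $0\in G$, which extends multiplicatively to a $\lor$-definable group morphism defined on a $\lor$-definable finite index subgroup of $G$. In the Tate case, the $\lor$-definable presentation $O_E(q)/\langle q\rangle$ is essential so that the target of the extended morphism is $\lor$-definable rather than merely type-definable. Intersecting the domain with a single definable piece and reading off the structure of $D$ then yields a definable finite index subgroup $H\subseteq G$, a finite subgroup $L\subseteq H$ accounting for the finite kernel, and an isomorphism between $H/L$ and one of the groups in the list.

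The main obstacle is the transfer from the type-definable framework of the MOP theorem to the (possibly $\lor$-)definable statement in the conclusion, which is where compactness has to be applied carefully together with the explicit $\lor$-definable covers built in Section 2.6. The last sentence of the proposition, that $L$ can be taken trivial in residue characteristic zero, follows from Lemma \ref{finite-kernel} applied to the pullback of $\phi$ along multiplication by the exponent of $N$ on $L(K)$, using that the relevant torsion and cotorsion of the model groups in the list are finite when the residue characteristic is zero.
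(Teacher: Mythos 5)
Your overall strategy coincides with the paper's: the proof given there is essentially ``the argument of Proposition 8.2 of the $p$-adic paper goes through,'' i.e.\ exactly the chain you describe --- MOP algebraization, reduction to a connected one-dimensional algebraic group over an algebraically closed field (so $G_a$, $G_m$ or an elliptic curve, twisted forms being trivial), the type-definable subgroup classifications of Section 3, and compactness together with the $\lor$-definable Tate cover $O_E(q)/\langle q\rangle$ to pass from type-definable to definable data. Up to that point your sketch is at the same level of detail as the paper itself and I see no divergence in method.

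There is, however, a gap in your last step, the claim that $L$ can be dropped in residue characteristic $0$. Lemma \ref{finite-kernel} applied to $0\to L\to H\to H/L\to 0$ produces a definable map $\psi\colon A\to H$ (where $A=H/L$ is a model group from the list) with finite kernel $F$ and finite cokernel; its image is a finite-index subgroup of $G$ isomorphic to $A/F$, \emph{not} to $A$. To conclude that no finite quotient is needed you must additionally show that $A/F$ is again isomorphic to a group on the list, for every finite subgroup $F$ of every model group $A$. This is not automatic and is exactly where the paper spends most of its written proof: cases such as $(K,+)$, $O$, $M$, $1+M$, $U_r$, $U_r^-$, $E_0^-$, $E_r$, $E_r^-$ are torsion free so nothing happens; for $K^{\times}$ and $O^{\times}$ one uses that $F$ is cyclic and $x\mapsto x^n$ induces $A/F\cong A$; for $O(b)/\langle b\rangle$ one computes that a finite subgroup has the form $\langle \eta,c\rangle$ with $\eta$ a root of unity and $c^n=b$, and the quotient is again of the form $O(c)/\langle c^m\rangle$; for $E(K)$ one uses that $E/F$ is an isogenous elliptic curve, and for $E_0$ with multiplicative reduction one uses the Tate parametrization to see that finite subgroups are cyclic and multiplication by $n$ gives the identification. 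Your closing sentence only verifies the hypotheses of Lemma \ref{finite-kernel} (finiteness of the relevant torsion and cotorsion), not this identification of the quotients, so as written the residue-characteristic-$0$ refinement is not established.
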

\begin{proof}
The proof in \cite[Proposition 8.2]{one-dimensional-p-adic} goes through.

That the finite kernel is not necessary in residue characteristic $0$ is a consequence
of Lemma \ref{finite-kernel}. Note that it applies to all the groups mentioned.
So now we just need to describe the groups of the form $B/A$ for $B$ a group
in the list and $A$ a finite subgroup of $B$. Cases 1, 2 and 3 are torsion free.
In cases 4 and 5 the group $A$ is finite cyclic and the map $x\mapsto x^n$
produces an isomorphism $B/A\cong B$.
In case 6 a finite subgroup is of the form $\langle c,\eta\rangle$ for 
$\eta\in \Oval^{\times}$ an $m$th root of unity and $c^n=b$. In this case
$B/A=O(c)/\langle \eta, c\rangle$. Now the map $x\mapsto x^m$,
 $O(c)\to O(c)$ produces an isomorphism
$O(c)/\langle \eta \rangle\cong O(c)$, so $B/A$ is as required, as $B/A\cong O(c)/\langle c^m\rangle=O(c^m)/\langle c^m\rangle$.
Cases 7, 8 and 9 are torsion free.
In case 10, if $A\subset E$ is a finite subgroup of an elliptic curve then
$E(K)/A\cong E'(K)$ for an elliptic curve $E'$.
If $E$ has multiplicative reduction then $E_0\cong (\Oval^{\times},\times)$ in the analytic
language, via the Tate map, so any finite subgroup is cyclic and 
$E_0(K)/A\cong E_0(K)$ via the multiplication by $n$ map.
Cases 11, 12, and 13 are torsion free.
Case 14 is as in case 5.
\end{proof}
We also have that the finite kernel is unnecessary in cases 4, 5, 6, 10, 11 and 14
for any characteristic and additionally in case 1 in the mixed characteristic case.
But for example in the mixed characteristic case $\Oval/p\Oval$ surjects onto $k$ and so $\Oval/p\Oval$ is not
finite and Lemma \ref{finite-kernel} does not apply.

We mention that Lemma \ref{finite-kernel} also shows that the finite kernel in 
\cite[Proposition 8.2]{one-dimensional-p-adic} is not necessary.

\begin{prop}\label{one-dimensional-psfv0}
Let $K$ a model of PL$_0$.

If $G$ is an abelian one-dimensional group definable in $K$, then there is
a finite index definable subgroup
$H\subset G$ such that 
$H$ is isomorphic as a definable group to one of the following:

\begin{enumerate}
\item $(K,+)$
\item $(\Oval,+)$
\item $((K^{\times})^n,\times)$
\item $((\Oval^{\times})^n,\times)$
\item $O(b)^n/\langle b^n\rangle $. 
\item $(U_r,\times)$
\item $G(d)^n$ for a $d\in K^{\times}\setminus (K^{\times})^2$ with $v(d)=0$.
\item $G(d)^-$ for a $d\in K^{\times}$ with $v(d)=1$.
\item $G(d)_r$ for a $d\in K^{\times}\setminus (K^{\times})^2$ with $v(d)=0$ or $v(d)=1$
and $r\in \Gamma_{>0}$.
\item $nE(K)/\langle \eta\rangle$ for an elliptic curve $E$ with good reduction 
and a torsion element $\eta$.
\item $nE_{0}(K)$ for an elliptic curve $E$ with multiplicative or additive reduction.
\item $E_{r}$ for $r> 0$ and an elliptic curve $E$.
\item $(O_E(b))^n/\langle b^n\rangle$ for an elliptic curve $E$ of 
split multiplicative reduction.
\end{enumerate}
\end{prop}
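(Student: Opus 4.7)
The plan is to mirror the argument of Proposition 8.2 of \cite{one-dimensional-p-adic} and of Proposition \ref{one-dimensional-acvf}, using the tools developed in Sections \ref{mopsection}, \ref{analytic}, and the classification of type-definable subgroups obtained in the previous section. Since $K$ is a model of PL$_0$, it is NTP$_2$ and algebraically bounded, and $G$ is abelian hence definably amenable, so the theorem of \cite{mop} stated in Section \ref{mopsection} applies: there is a type-definable subgroup $H_0 \subset G$ of small index together with a type-definable group morphism $\phi:H_0\to L(K)$ of finite kernel into an algebraic group $L$ over $K$. A dimension count (using algebraic boundedness) forces $\dim L \leq 1$, and since $G$ is one-dimensional and $\ker \phi$ is finite, $L$ is one-dimensional. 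By the structure theorem for one-dimensional connected algebraic groups over a perfect field, the connected component of $L$ is $\mathbb{G}_a$, $\mathbb{G}_m$, a twisted multiplicative group $G(d)$, or an elliptic curve $E$; replacing $H_0$ by a smaller type-definable subgroup of small index I may assume the image lands in the connected component.

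Next I would identify $X=\phi(H_0)$ as a type-definable subgroup of $L(K)$ and apply the appropriate classification from Section 3: Proposition \ref{psfv0-add} in the additive case, Proposition \ref{psfv0-mult} in the multiplicative case, Proposition \ref{psfv0-tmult} in the twisted multiplicative case, and the two elliptic curve propositions for $E$. In each case $X$ is a small intersection of definable groups $X_n$, each a finite extension of one of the prototypes listed in the statement (or of $E_0$ when $E$ has multiplicative or additive reduction). Since $\phi$ has finite kernel and $G$ is one-dimensional, the groups $\phi^{-1}(X_n)$ form a small decreasing family of type-definable subgroups of $G$ with intersection of small index in $G$; by compactness and one-dimensionality of $G$, for large enough $n$ the group $\phi^{-1}(X_n)$ is already of finite index in $G$, so I may pass to a definable finite-index $H \subset G$ mapping via $\phi$ with finite kernel onto a group $X_n$ of the form above.

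To remove the finite kernel I would invoke Lemma \ref{finite-kernel}: for each prototype in the list the group $A/nA$ is finite (this uses that $K^\times/(K^\times)^n$, $O^\times/(O^\times)^n$, $E(K)/nE(K)$, and the corresponding quotients of $G(d)$ are finite in PL$_0$, which follows from pseudo-finiteness of the residue field and that $\Gamma$ is a $Z$-group) and the $n$-torsion is finite, so we obtain a definable map in the opposite direction with finite kernel and cokernel, and we may replace $H$ by its image. This produces an isomorphism between a finite-index definable subgroup of $G$ and one of the listed prototypes, where the extra factors $(K^\times)^n$, $(O^\times)^n$, $O(b)^n/\langle b^n\rangle$, $nE_0(K)$, etc.\ appear precisely because these are the images of Lemma \ref{finite-kernel} after possibly discarding torsion. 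The main obstacle is the elliptic curve with split multiplicative reduction that produces item (13): there one cannot directly realize $E_0$ as a quotient of $O$, and instead one must use the Tate uniformization of Proposition \ref{tate-uniformization-2} together with the $\lor$-definable group $O_E$ produced after it, exactly as in case (14) of Proposition \ref{one-dimensional-acvf}; checking that the resulting $\lor$-definable isomorphism descends to a definable one on a finite-index subgroup, and that the remaining reduction types (good, additive, non-split multiplicative) fall under items (10)--(12) via the finiteness of $E(K)/E_0(K)$, is the delicate bookkeeping step.
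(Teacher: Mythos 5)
Your overall route is the same as the paper's: the paper's proof consists of the single sentence that Proposition 8.2 of \cite{one-dimensional-p-adic} goes through, followed by an application of Lemma \ref{finite-kernel} and an explicit verification of cases 10 and 11. Your reconstruction of the skeleton (the theorem of \cite{mop}, the structure theorem for one-dimensional algebraic groups, the classification of type-definable subgroups from Section 3, removal of the finite kernel via Lemma \ref{finite-kernel}, Tate uniformization for split multiplicative reduction) matches the intended argument. However, there are two concrete gaps. First, your compactness step is incoherent as written: since $X=\phi(H_0)\subseteq X_n$ for every $n$ and $\phi$ is only defined on $H_0$, the preimages $\phi^{-1}(X_n)$ are all equal to $H_0$, which is type-definable of \emph{small} (not finite) index, so "for large enough $n$ the group $\phi^{-1}(X_n)$ is of finite index in $G$" does not follow and does not even say anything. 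The genuine work here --- passing from a type-definable morphism on a small-index type-definable subgroup to a definable morphism on a definable finite-index subgroup --- requires the local-group-morphism and $\lor$-definable-group machinery of \cite{one-dimensional-p-adic} (this is also why $\lor$-definable targets such as $O(b)^n/\langle b^n\rangle$ appear in the list rather than the groups $o(a)$ occurring in the subgroup classification). Your proposal does not supply a substitute for this step.

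Second, you omit the one piece of the argument the paper actually writes out: the verification that quotients of the elliptic-curve groups by finite subgroups stay within the list. Concretely, for additive reduction one needs that $E_0(K)$ is torsion-free (via the exact sequence $0\to E_1(K)\to E_0(K)\to (k,+)\to 0$); for multiplicative reduction one needs that every finite subgroup of $E_0(K)$ is cyclic (because $E_1(K)$ is torsion-free and finite subgroups of a possibly twisted multiplicative group over $k$ are cyclic) so that multiplication by $n$ gives $mE_0(K)/A\cong nmE_0(K)$; and for good reduction one needs that a finite subgroup is cyclic or a product of two cyclic groups $\langle a\rangle\times\langle b\rangle$ with $\mathrm{ord}(a)\mid\mathrm{ord}(b)$, so that multiplying by $\mathrm{ord}(a)$ reduces to a cyclic quotient, which is why item 10 carries only a single torsion element $\eta$. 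Your phrase "after possibly discarding torsion" does not do this bookkeeping, and without it the stated forms of items 10 and 11 are not justified.
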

\begin{proof}
The proof in \cite[Proposition 8.2]{one-dimensional-p-adic} goes through.

As before the finite kernel is unnecessary by Lemma \ref{finite-kernel}.
We just verify cases 10 and 11. If $E$ has additive reduction then there is a 
short exact sequence $0\to E_1(K)\to E_0(K)\to (k,+)\to 0$, so $E_0(K)$ is torsion free.
In case $E$ has multiplicative reduction there is a short exact sequence
$0\to E_1(K)\to E_0(K)\to A(k)\to 1$ where $A$ is a possibly twisted multiplicative
group over $k$. As $E_1(K)$ is torsion free we conclude that any finite subgroup
of $E_0(K)$ is cyclic, because this is the case in $A(k)$, and also the restriction
of the reduction map is an isomorphism. We conclude that the multiplication by $n$
map $mE_0(K)\to nmE_0(K)$ produces an isomorphism $mE_0(K)/A\cong nm E_0(K)$ where
$A$ is a finite subgroup.
In case the curve has good reduction then the $m'$-torsion is included in 
$\mathbb{Z}/m'\mathbb{Z}^2$ so a 
finite group is cyclic or a direct product of two cyclic groups
$\langle a\rangle\times \langle b\rangle$ where $\text{ord}(a)|\text{ord}(b)$.
In the second case the multiplication by $m=\text{ord}(a)$ map produces an isomorphism
$nE(K)/A\cong nmE(K)/\langle mb\rangle$. 
\end{proof}

\subsection{Definable isomorphisms between one-dimensional definable groups in ACVF$_{(0,0)}$}\label{isogrosec}
Suppose $K$ is a model of ACVF$_{(0,0)}$. In this section we describe to what extent
the list in the main result, Proposition \ref{one-dimensional-acvf}, is redundant. 
This is done in Proposition \ref{acvf-iso}.
In order to make the question precise
define a category consisting of groups definable in $K$, and the maps
between two objects $G$ and $H$ consists of definable group maps
$G_1\to H$ with $G_1$ a subgroup of finite index of $G$, modulo the equivalence
relation that identifies two such maps $G_1\to H$ and $G_2\to H$ if there
is a group of finite index $G_3\subset G_1\cap G_2$ so that the maps coincide
in $G_3$. It is straightforward to see that $G$ and $H$ are isomorphic in
this category if and only if $G$ and $H$ have finite index definable subgroups
isomorphic as definable groups. As all the groups mentioned in the 
main theorem
have no proper definable subgroups of finite index, two such groups
are isomorphic in this category if and only if they are isomorphic as definable
groups.

We call a group equal to one of the list in Proposition \ref{one-dimensional-acvf}
a basic group. The cases $K, \Mval, \Oval$ are called basic of additive type.
The cases $K^{\times},\Oval^{\times}, H(b), U_r^-$ and $U_r$ are called 
basic of multiplicative type.
The cases $E_0$, $E_r^-$, $E_r$ for $r>0$ for an elliptic curve $E$ and 
the case $H_E(b)$ for $E$ of multiplicative reduction, are called basic of elliptic 
type $E$.

\begin{lema}\label{def-to-alg-mor}
Let $K$ be a perfect field with extra structure which is algebraically bounded.
Let $G$ and $H$ be algebraic groups over $K$.
Suppose $A\leq G(K)$ is a definable subgroup of the same dimension
as $G$ and $f:A\to H(K)$ is a definable
group morphism. Then there exists an algebraic group $T$ over $K$ of dimension
equal to the dimension of $G$ and morphisms of algebraic groups over $K$
$p:T\to G$ and $g:T\to H$ such that $p$ has finite kernel and 
for every $x\in A$ there is $y\in T(K)$ such that $p(y)=x$ and $g(y)=f(x)$.
If $B\subset T(K)$ is a definable subgroup of $T(K)$ with no definable finite
index subgroups such that $p(B)\subset A$,
then $fp=g$ in $B$.
\end{lema}

\begin{proof}
Consider $\Gamma\subset G(K)\times H(K)$ the graph of $f$.
It is in definable bijection with $A$ so it has dimension $n$.
Define $T$ the reduced closed sub-scheme of $G\times H$ 
corresponding to the Zariski closure of $\Gamma$ in $G\times H$.
Then $T$ is an algebraic subgroup of $G\times H$ of dimension $n$, by 
Lemmas \ref{closure-group} and \ref{dim-alg}.
The composition of $T\to G\times H$ with the first projection 
$G\times H\to G$ is the group morphism $p$. The image of $p$ contains
$A$ and so it is of dimension $n$. So the kernel of $p$ is finite.
The morphism $g$ is the composition of $T\to G\times H$ with the second
projection $G\times H\to H$.
For the final assertion note that for every $y\in B$, there is $y'\in T(K)$
such that $p(y)=p(y')$ (so that $y'=yk$ for a $k\in \text{ker}(p)$),
 and $g(y')=f(p(y))$.
 If $C$ is the set of $y\in B$
such that $g(y)=f(p(y))$, then the map $C\backslash B\to g(\text{ker}(p))$
defined as $Cy\mapsto k$ if $g(y)k=fp(y)$ is an injective and well defined
map, so we conclude $C$ has finite index in $B$ and $C=B$ as desired.
\end{proof}

\begin{lema}\label{add-mor}
Let $K$ be a model of ACVF$_{(0,0)}$.
The definable group morphisms $\Oval\to K$, $\Mval\to K$ and $K\to K$ 
(even in the analytic language)
are multiplication
by a constant from $K$.
\end{lema}
\begin{proof}
Let $f:\Oval\to K$ be a definable morphism. If $f(1)=k$ then the set of $x\in \Oval$
such that $f(x)=kx$ is a definable group containing 1, so it must be equal to $\Oval$, by
Proposition \ref{acvf-add} (and C-minimality in the analytic case, see Proposition \ref{c-minimal}).

The other cases follow from this because $\Mval=\bigcup_{r>0} B_r$ and
 $K=\bigcup_r B_r$ where each
$B_r$ is isomorphic to $\Oval$.
\end{proof}

\begin{lema}\label{x-car}
Let $K$ be a model of ACVF$_{(0,0)}$.
\begin{enumerate}
\item \label{ux-car}Let $f:U_r\to K^{\times}$ be a definable group morphism.
Then the image of $f$ falls in $U_0^-$.

\item \label{uox-car} Let $f:U_r^-\to K^{\times}$ be a definable group morphism.
Then the image of $f$ falls into $U_0^-$.

\item \label{oxx-car} Let $f:\Oval^{\times}\to K^{\times}$ be a definable group morphism.
Then the image of $f$ falls in $\Oval^{\times}$.

\item \label{uh-car} Let $f:U_r\to H(b)$ be a definable group morphism, then the image of 
$f$ falls into $U_0^-$.

\item \label{uoh-car} Let $f:U_r^-\to H(b)$ be a definable group morphism.
Then the image of $f$ falls into $U_0^-$.

\item \label{oxh-car} 
Let $f:\Oval^{\times}\to H(b)$ be a definable group morphism, then the image of
$f$ falls into $\Oval^{\times}$.

\end{enumerate}
\end{lema}

\begin{proof}
For \ref{ux-car} consider the composition $g:U_r\to K^{\times}\to \Gamma$.
If this morphism is not trivial,
then because $U_r$ has no proper definable subgroups of finite index
the image is infinite. The kernel of $g$ is a definable
proper subgroup of $U_r$ so it is contained in $U_r^-$.
We then get surjective definable group morphism
 $\text{Im}(g)=U_r/\text{ker}(g)\to U_r/U_r^-\cong k$. As
$k$ and $\Gamma$ are orthogonal any group morphism $\text{Im}(g)\to k$ 
has finite image, see Lemma \ref{val-and-res}, and this is a contradiction.
We conclude that the image of $f$ falls in $\Oval^{\times}$.

Now consider then composition $U_r\to \Oval^{\times}\to k^{\times}$.
The definable subgroups of $k^{\times}$ are $k^{\times}$ and finite.
As $U_r$ has no proper definable subgroup of finite index, if the map
is not trivial it is surjective.
As before we obtain a definable surjective group morphism $k^{\times}\to k$.
We conclude, by Lemma \ref{def-to-alg-acf} and \ref{alg-group-map},
that the map is trivial, in other words that the image of 
$f$ falls into $U_0^-$.

For \ref{uox-car} note that $U_r^-=\bigcup_{t>r}U_t$.

For \ref{oxx-car}, we already have that $f(U_0^-)\subset U_0^-$, so the map
$f$ produces a map $k^{\times}\to \Gamma$. As $k$ and $\Gamma$ are orthogonal
this map is trivial, as required.

The items \ref{uh-car}, \ref{uoh-car} and \ref{oxh-car} 
have similar proofs, replacing $\Gamma$ by $C(v(b))$ where
required.
\end{proof}

\begin{lema}\label{u-mor}
If $f:U_r\to U_0^-$ is a definable group morphism then 
$f$ is of the form $x^q$ for a rational number $q$.

The definable group morphisms $f:U_r^-\to U_0^-$ are of the same form.
\end{lema}

\begin{proof}
Let $f:U_r\to U_0^-$ be a definable group morphism.
We have that there exist integers $n> 0$ and $m$ such that
the maps $f(x^n)=x^m$ for every $x\in U_{\frac{1}{n}r}$.
This is by Lemmas \ref{def-to-alg-mor} and \ref{alg-group-map}. This finishes the proof.

The proof for $U_r^{-}$ is identical.
\end{proof}

\begin{lema}\label{ox-mor} Let $K$ be a model of ACVF$_{(0,0)}$.
If $f:\Oval^{\times}\to \Oval^{\times}$ is a definable group morphism, then there is
an integer $m$ such that $f(x)=x^m$.
\end{lema}

\begin{proof}
We know there exist co-prime integers $m$ and $n$ such that
$f(x)^n=x^m$ for $x\in U_0^-$, by Lemma \ref{u-mor}.
The map $\Oval^{\times}\to \Oval^{\times}$, $x\mapsto f(x)^nx^{-m}$ is a definable
group map that is trivial in $U_0^-$ and so factors as a group map
$k^{\times}\to U_0^{\times}$. Such a map has finite image by Lemma \ref{res-to-home}, 
and so because 
$k^{\times}$ has no proper subgroups of finite index it is trivial.
In other words $f(x)^n=x^m$ for every $x\in \Oval^{\times}$.
Take $x$ an arbitrary point in $\Oval^{\times}$, and
$y$ and $y'=y\alpha$ two different
$n$-roots of $x$, where $\alpha$ is a primitive $n$-root of unity. 
Then $y^m=y'^m=f(x)$, so $\alpha^m=1$. This can only happen if $n=1$ as required. 
\end{proof}

\begin{lema}\label{xx-mor}
Let $K$ be a model of ACVF$_{(0,0)}$.
Suppose $f:K^{\times}\to K^{\times}$ is a definable group map.
Then it is of the form $x\mapsto x^n$ for an integer $n$.
\end{lema}

\begin{proof}
We know there exists an integer $n$ such that $f(x)=x^n$ for every $x\in \Oval^{\times}$,
by Lemma \ref{ox-mor}. Then 
the map $K^{\times}\to K^{\times}$ given by $x\mapsto f(x)x^{-n}$ is a definable
group morphism, and its kernel contains $\Oval^{\times}$.
So it factors as a group map $\Gamma\to K^{\times}$, which we know must be 
trivial, by Lemma \ref{val-to-home}, as required.
\end{proof}

Recall that for $c\in \Gamma$ we set $C(c)=O(c)/\mathbb{Z}c$.

\begin{lema}\label{c-mor}
Let $\Gamma$ be a divisible ordered abelian group.
Then there are no nontrivial definable group morphisms $C(c)\to \Gamma$
or $\Gamma \to C(c)$.
If $f:C(c)\to C(b)$ is a nontrivial definable group morphism
then $o(c)=o(b)$, thus $C(c)=C(b)$, and there is a non-zero $q\in \mathbb{Q}$ such that $f$ is induced by the map $x\mapsto qx$.
\end{lema}

\begin{proof}
Suppose we have a type-definable morphism $f:o(c)\to \Gamma$, say
type-definable over a parameter set $A$.
Consider the set $X$ of elements $x\in o(c)$ which are greater than any $A$-definable element. These conform a complete type over $A$, so there is a rational 
number $q$ and an $A$-definable element $a$ such that
$f(x)=qx+a$ for all $x$ in $X$. As any point of $o(c)$ is a difference of two
elements of $X$ we have that $f(x)=qx$ for every $x\in o(c)$.

This implies that any ind-definable morphism $f:O(c)\to \Gamma$ is of the form
$x\mapsto qx$, see the uniqueness in Lemma \ref{r-extension}. If it is not trivial then it is injective, so there is no
nontrivial definable morphism $C(c)\to \Gamma$ as otherwise the composition
$O(c)\to C(c)\to \Gamma$ is nontrivial and not injective.

A similar proof shows
 that every type-definable group morphism $o(c)\to C(c)$ is a multiplication 
by a rational number followed by the projection map $O(c)\to C(c)$.
We conclude that every ind-definable group morphism $O(c)\to C(c)$
lifts to an ind-definable group morphism $O(c)\to O(c)$ and any ind-definable
morphism $O(c)\to O(c)$ is of the form $x\mapsto qx$.

Now we prove that there is no nontrivial 
type-definable group morphism $f:o(b)\to C(c)$
if $c\in o(b)$. Indeed, suppose $f$ is defined over $A$, and $A$ includes $d$
and $c$. Identifying $C(c)$ with the interval $[0,c)$,
we have as before that, for $X$ the set of elements of 
$o(b)$ which are larger than every $A$-definable element in $o(b)$, 
then $f(x)=qx+a$ for some rational number $q$, a constant
$a$ which is $A$-definable, and every $x\in X$.
But the restriction $f(x)\in [0,c)$ is 
impossible unless $q=0$, so we conclude $f$ is constant. As every element
in $o(b)$ is a difference of two elements in $X$ we conclude that 
$f$ is trivial.
This implies that there is no non-trivial group morphism $\Gamma\to C(c)$
and that if $C(c)\to C(b)$ is a nontrivial group morphism then 
$o(c)\subset o(b)$.

Now suppose that $o(c)\subset o(b)$ and that 
$f:o(c)\to C(b)$ is a type-definable
group morphism. As before one obtains that $f(x)=\overline{qx}$ for a rational
number.
Now if $O(c)\to C(b)$ is a nontrivial ind-definable 
group morphism, then the map lifts
to an ind-definable group map $O(c)\to O(b)$ given by multiplication by a 
rational number. This map factors as 
$C(c)\to C(b)$ only in the case described in the hypothesis.
\end{proof}

\begin{lema}\label{h-mor}
Let $K$ be a model of ACVF$_{(0,0)}$.
There are no nontrivial group maps $H(c)\to K^{\times}$ or $K^{\times}\to H(c)$.
If $H(c)\to H(b)$ is a nontrivial group map then there are integers
such that $b^m=c^n$ and the map comes from $x\mapsto x^n$.
\end{lema}

\begin{proof}
We know that for a definable group map $H(c)\to K^{\times}$ the image
of $\Oval^{\times}$ falls into $\Oval^{\times}$, see Lemma \ref{x-car}, so we obtain a group map
$C(v(c))\to \Gamma$, which we know must be trivial, by Lemma \ref{c-mor}. In other words
the image of $H(c)$ falls into $\Oval^{\times}$.
The map $f:H(c)\to \Oval^{\times}$ produces a surjective 
map $k^{\times}\to C(v(c))/A$
where $A$ is the image of the kernel of $H(c)\to \Oval^{\times}\to k^{\times}$ 
in $C(v(c))$.
If $A$ is finite we have a surjective map $k^{\times}\to C(\frac{1}{n}v(c))$,
which is a contradiction with Lemma \ref{val-and-res}.
If it is infinite then the kernel of $H(c)\to k^{\times}$ contains
$\Oval^{\times}$, by Lemmas \ref{v-opa} and \ref{opa-group}, 
so that the image of $\Oval^{\times}$ under $H(c)\to \Oval^{\times}$
falls into $U_0^-$. By the characterization of such maps in Lemma \ref{ox-mor}, we conclude
that $f$ is trivial when restricted to $\Oval^{\times}$.
Then the map $f$ factors as a group map $C(v(c))\to \Oval^{\times}$, which 
can only be trivial, see Lemma \ref{val-to-home}.

Now suppose than $K^{\times}\to H(c)$ is a definable group map.
We know that the image of $\Oval^{\times}$ falls into $\Oval^{\times}$, see Lemma \ref{x-car},
 so this map
produces a group map $\Gamma\to C(v(c))$ which must be trivial by Lemma \ref{c-mor}.
In other words the image of $K^{\times}$ falls into $\Oval^{\times}$.
Such a map is trivial, see Lemma \ref{xx-mor}.

Now suppose $f:H(c)\to H(b)$ is a definable group map.
Then, by Lemma \ref{x-car}, it produces a map $C(v(c))\to C(v(b))$. Assume first this map
is trivial. Then the image of $f$ falls into $\Oval^{\times}$, and we have proven
$f$ must be trivial.
We have then that the map $C(v(c))\to C(v(b))$ is not trivial.
In this case we have that $O(c)=O(b)$, and also that $f(o(c))$ 
falls into $o(b)$, see Lemma \ref{x-car}. So the restriction of $f$ to $o(c)$ lifts to a type-definable group map 
$h:o(c)\to O(b)$. The restriction of this group map to $\Oval^{\times}$
is $x\mapsto x^n$ for some integer $n$, by Lemma \ref{ox-mor}. 
Now the map $g:o(c)\to O(b)$
given by $x\mapsto f(x)x^{-n}$ is a type-definable group map which factors
as $o(v(c))\to K^{\times}$. This map is trivial by Lemma \ref{val-to-home}.
We conclude that $g:O(c)\to O(b)$ given by $x\mapsto x^n$, equals
$f$ when composed by the canonical $O(b)\to C(b)$ and restricted to $o(c)$.
So if we denote $p$ the canonical projections, 
then we have that $pg=fp$, by the uniqueness part of Lemma \ref{r-extension}. 
We conclude that $c$, $b$ and $f$ are as described.
\end{proof}

We are now left with the elliptic curve case.

\begin{lema}\label{e-car}
Suppose $E$ and $F$ are elliptic curves.

\begin{enumerate}
\item \label{er-car}
Suppose $f:E_r\to F$ is a non-trivial definable group morphism with $r>0$. 
Then it is injective and its image is of the form $F_s$.

\item \label{ero-car}
Suppose $f:E_r^-\to F$ is a non-trivial definable group morphism. 
Then it is injective and its image is of the form $F_s^-$.
\item \label{e0-car}
Suppose $f:E_0\to F$ is a non-trivial definable group morphism. 
Then the image of $f$ in $F$ is $F_0$.
\item \label{eo0-car}
Suppose $f:E_0^-\to F$ is a non-trivial definable group morphism. Then 
$f$ is an isomorphism onto $F_0^-$.
\item \label{erhe-car}
Suppose $f:E_r\to H_F(b)$ is a definable group morphism with $r>0$. 
Then the image of $f$ in $F$ falls into $F_0^-$.
\item \label{erohe-car}
Suppose $f:E_r^-\to H_F(b)$ is a definable group morphism 
Then the image of $f$ in $F$ falls into $F_0^-$.
\item \label{e0he-car}
Suppose $f:E_0\to H_F(b)$ is a non-trivial definable group morphism.
Then the image of $f$ in $F$ is $F_0$.
\end{enumerate}
\end{lema}

\begin{proof}
For item \ref{er-car} we first show that $f(E_r)\subset F_0^-$. This is proven in the same
way as Lemma \ref{x-car}. 

Now we show that $f$ is injective.
From Lemma \ref{def-to-alg-mor} and Lemma \ref{alg-group-map} we have that there is an
integer $n$ and a group morphism $g:E\to F$ such that 
$f(nx)=g(x)$ for every $x\in E_{\frac{1}{n}r}$. As $g$ has finite kernel, $f$ as finite kernel. But $E_0^-$ is torsion free so it is injective.

Now we show that the image of $E_r$ is of the form $F_s$. By Proposition \ref{acvf-ell}
we have that the image of $E_r$ is of the form $F_s$ or of the form $F_s^-$.
As $E_r$ has a maximal proper definable subgroup, we conclude that the second case
does not occur.

Item \ref{ero-car} is similar and omitted.

For item \ref{e0-car}, showing that $f(E_0)\subset F_0$ is similar to the proof of Lemma
\ref{x-car}. To show that it is surjective, note that there exists an integer $n$ and a
group morphism $g:E\to F$ such that $f(nx)=g(x)$ for $x\in E_0$.
We also have a group morphism $h:F\to E$ such that $gh(x)=mx$ for an integer $m$.
By what we have just proven $h$ and $g$ restrict to $F_0\to E_0$ and $E_0\to F_0$.
As $F_0$ is divisible, we conclude that $f(E_0)=F_0$.

For item \ref{eo0-car} we have already seen that $f$ is injective and falls into $F_0^-$, 
and surjectivity is similar to the case \ref{e0-car}.

The items \ref{erhe-car}, \ref{erohe-car}, and \ref{e0he-car} are similar to 
\ref{er-car}, \ref{ero-car} and \ref{e0-car} respectively.
\end{proof}

\begin{lema}\label{isog-to-iso}
Suppose $K$ is a model of ACVF$_{(0,0)}$.
Suppose $E$ and $F$ are isogenous elliptic curves. Then $E$ and $F$ have the
same reduction type. Also $E$ is a nonstandard Tate curve if and only if 
$F$ is. If $E$ and $F$ are of multiplicative type then $E_0\cong F_0$.
In any case $E_0^-\cong F_0^-$.
\end{lema}

\begin{proof}
Let $f:E\to F$ be a non-trivial definable group morphism.

By Lemma \ref{e-car} we have that
$f(E_0)=F_0$ and $f(E_0^-)=F_0^-$. Suppose $E$ is multiplicative and $F$ is of good type.
Then we get a surjective group morphism
$E_0/E_0^-=k^{\times}\to F_0/F_0^-=\tilde{F}(k)$, 
which by Lemma \ref{alg-group-map} can not happen. 

Suppose $E$ and $F$ are of multiplicative type. Then the kernel of $f:E_0\to F_0$ is finite
cyclic and so it factors through the multiplication by $n$ map $t_n:E_0\to E_0$ into
an isomorphism $E_0\to F_0$.
\end{proof}

\begin{lema}\label{goo-ell-mor}
Suppose $K$ is a model of ACVF$_{(0,0)}$.
Suppose $E$ and $F$ are elliptic curves over $K$. Suppose
$f:E\to F$ is a definable group morphism.
Then $f$ is an algebraic group morphism.

In particular, 
if $E$ and $F$ are isomorphic as definable groups, then they are isomorphic
as algebraic groups.
\end{lema}

\begin{proof}
Suppose $f:E\to F$ is a definable group morphism.
By Lemma \ref{def-to-alg-mor} and Lemma \ref{alg-group-map} we conclude
there is an elliptic curve $E'$ and isogenies $p:E'\to E$ and $q:E'\to F$
such that $fp=q$. As $p$ is surjective we conclude $f$ is definable
in ACF$_0$, and so it is an algebraic group morphism by Lemma \ref{def-to-alg-acf}.
\end{proof}

\begin{lema}\label{he-mor}
Suppose $K$ is a model of ACVF$_{(0,0)}$ and $E$ and $F$ are elliptic curves of 
multiplicative type.
Suppose $f:E\to F$ is an isogeny. Then $f$ restricts to a surjection $o_E\to o_F$ and this 
restriction extends in a unique way to an ind-definable map $\tilde{f}:O_E\to O_F$.
If we denote $p$ the canonical projections $O_E\to E$ and $O_F\to F$ then we have
$p\tilde{f}=fp$.

Now suppose $g:H_E(a)\to H_F(b)$ is a definable group map. 
Then $g$ restricts to a surjective group map $o_E(a)\to o_F(b)$ which extends in a unique
way to an ind-definable morphism $\tilde{g}:O_E(a)\to O_F(b)$. If we denote $p$
the canonical projections $O_E(a)\to H_E(a)$ and $O_F(b)\to H_E(b)$ then $p\tilde{g}=gp$.
Further, there exists an isogeny
$f:E\to F$ and a non-zero integer $n$ such that $\tilde{g}t_n=\tilde{f}$, for $t_n$
the multiplication by $n$ map $O_E(a)\to O_E(a)$.

Reciprocally if $c$ is an element of $O_E$ with $nc=a$, 
then any isogeny $f:E\to F$ such that $f$ is $0$ at the $n$-torsion of $E_0$
and $\tilde{f}(c)\in \mathbb{Z}b$ produces a  morphism $g:H_E(a)\to H_F(b)$
determined by the equations $\tilde{g}t_n=f$ and $p\tilde{g}=gp$.
\end{lema}

\begin{proof}
Suppose $f:E\to F$ is an isogeny. By Lemma \ref{e-car} we have that
$f(E_0)=F_0$, so $f$ induces a definable group map 
$\bar{f}:C(d_E)\to C(d_F)$ 
where $d_E$ and $d_F$ are the valuations of the discriminants of 
minimal Weierstrass equations of $E$ and $F$ respectively.
We know such a map satisfies $\bar{f}(o(d_E))=o(d_F)$, see Lemma \ref{c-mor}. 
We conclude that
$f(o_E)=o_F$, and so $f$ extends in a unique way to an ind-definable morphism
$\tilde{f}:O_E\to O_F$, by Lemma \ref{r-extension}. That $p\tilde{f}=fp$ follows
from the uniqueness in that lemma.

Now suppose $g:H_E(a)\to H_F(b)$ is a definable group morphism. As before we have that
$g(E_0)=F_0$, $g(o_E(a))=o_F(b)$ and
existence of $\tilde{g}$, with $p\tilde{g}=pg$. 
By Lemmas \ref{def-to-alg-mor} and \ref{alg-group-map} we conclude that there exists
$n$ and $f:E\to F$ with $g(nx)=f(x)$ for $x\in E_0$.
Now $f-gt_n$ is a morphism $o_E(a)\to o_F$ which factors as $o(v(a))\to o_F$.
By Lemma \ref{val-to-home} we conclude $f=gt_n$ in $o_E(a)$.
By the uniqueness in Lemma \ref{r-extension} we conclude 
$\tilde{f}=\tilde{g}t_n$, as required.
\end{proof}

\begin{prop}\label{acvf-iso}
Let $K$ be a model of ACVF$_{(0,0)}$. Let $G$ and $H$ be two basic groups.
\begin{enumerate}
\item\label{k-iso} If $G=K$ then $G$ is isomorphic to $H$ if and only if 
$H=K$.
\item\label{o-iso} If $G=\Oval$ then $G$ is isomorphic to $H$ if and only if $H=\Oval$.
\item\label{m-iso} If $G=\Mval$ then $G$ is isomorphic to $H$ if and only if $H=\Mval$.
\item\label{x-iso} If $G=K^{\times}$ then $G$ is isomorphic to $H$ if and only if 
$H=K^{\times}$
\item\label{ox-iso} If $G=\Oval^{\times}$ then $G$ is isomorphic to $H$ if and only if 
$H=\Oval^{\times}$.
\item\label{uo-iso} If $G=U_r^-$ then $G$ is isomorphic to $H$ if and only if 
$H=U_{qr}^-$ for $q$ a positive rational number.
\item\label{u-iso} If $G=U_r$ then $G$ is isomorphic to $H$ if and only if 
$H=U_{qr}$ for $q$ a positive rational number.
\item\label{h-iso} If $G=H(a)$ then $G$ is isomorphic to $H$ if and only if 
$H=H(a)$.
\item\label{e0-iso} If $G=E_0$ for $E$ an elliptic curve of multiplicative type 
then $G$ is isomorphic to $H$ if and only if 
$H=F_0$ for $F$ an elliptic curve isogenous to $E$, necessarily of the 
same reduction type.
\item \label{eo0-iso} If $G=E_0^-$ for $E$ an elliptic curve, then $G$ is 
isomorphic to $H$ if and only if $H=F_0^-$ for an elliptic curve $F$
isogenous to $E$ necessarily of the same reduction type as $E$.
\item\label{e-iso} If $G=E$ for $E$ an elliptic curve of good type 
then $G$ is isomorphic to $H$ if and only if 
$H=F$ for $F$ an elliptic curve isomorphic to $E$ as elliptic curves.
\item\label{er-iso} If $G=E_r$ with $r>0$, and $E$ an elliptic curve 
then $G$ is isomorphic to $H$ if and only if 
$H=F_s$ for an elliptic curve $F$, necessarily of the same reduction type as $E$,
and such that there is an isogeny $f:E\to F$ and a positive integer $n$ such that
$wf(x)=s$ for an $x$ with $w(x)=\frac{1}{n}r$ 
\item\label{ero-iso} If $G=E_r^-$ with $r>0$, and $E$ an elliptic curve 
then $G$ is isomorphic to $H$ if and only if 
$H=F_b^-$ for an elliptic curve $F$, necessarily of the same reduction type as $E$,
and such that there is an isogeny $f:E\to F$ and a positive integer $n$ such that
$wf(x)=b$ for an $x$ with $w(x)=\frac{1}{n}a$.
\item\label{he-iso} If $G=H_E(a)$ for an elliptic curve $E$ of multiplicative type,
then $G$ is isomorphic to $H$ if and only if $H=H_F(b)$ for $F$ such that there
is an isogeny $f:E\to F$ which restricts to a map $f:o_E\to o_F$, such that if 
$\tilde{f}:O_E\to O_F$ is the unique ind-definable extension of $f|o_E$, then
$\tilde{f}(c)=b$ for $c$ such that $nc=a$ where $n$ is the number of elements in the 
kernel of $f|E_0$. Any isogeny $f:E\to F$ restricts to $f:o_E\to o_F$ so if the kernel
of $f|E_0$ has size $n$, then obtain an isomorphism $H_E(nc)\cong H_F(\tilde{f}(c))$
\end{enumerate}
\end{prop}

\begin{proof}
For \ref{k-iso}, \ref{o-iso} and \ref{m-iso}, note that if $H$ is isomorphic to $G$
then it must be basic of additive type by Lemma \ref{def-to-alg-mor} and 
Lemma \ref{alg-group-map}. The result now follows from Lemma \ref{add-mor}.

For \ref{x-iso} note that by Lemma \ref{def-to-alg-mor} and Lemma \ref{alg-group-map}
we have that $H$ must be of multiplicative type. Also, by Lemma \ref{x-car},
$H$ can not be of type $\Oval^{\times}$, $U_r^-$ or $U_r$. Finally $H$ can not be of type
$H(a)$ either by Lemma \ref{h-mor}

For \ref{ox-iso} as before $H$ has to be of multiplicative type and by Lemma \ref{x-car}
it can only be of the form $\Oval^{\times}$.

For \ref{uo-iso} as before $H$ has to be of multiplicative type and by Lemma \ref{x-car}
it can not be of the form $H(a)$. We conclude by Lemma \ref{u-mor}

Item \ref{u-iso} is the same as \ref{uo-iso}.

For \ref{h-iso} we have already seen $H$ must be of the form $H(b)$, and by 
Lemma \ref{h-mor} we have $b=a$.

For \ref{e0-iso} and \ref{eo0-iso} see Lemma \ref{isog-to-iso} and 
Lemmas \ref{def-to-alg-mor}, \ref{alg-group-map} and \ref{e-car}.

For \ref{er-iso} and \ref{ero-iso} see 
Lemmas \ref{def-to-alg-mor}, \ref{alg-group-map} and \ref{e-car}.

For \ref{e-iso} see Lemma \ref{goo-ell-mor}.

For \ref{he-iso} see 
Lemmas \ref{def-to-alg-mor}, \ref{alg-group-map}, \ref{e-car}, and \ref{he-mor}.
\end{proof}
\bibliographystyle{plain}
\bibliography{one-dimensional}
\end{document}